\def\Prox{{\hbox{\rm Prox}}}
\def\Prox{{\hbox{\rm Prox}}}
\def\argmin{{\mathop{\hbox{\rm argmin}\,}}}
\newtheorem{thm}{\bf{Theorem}}[section]
\newtheorem{lemma}[thm]{\bf{Lemma}}
\newtheorem{df}[thm]{\bf{Definition}}
\newtheorem{cor}[thm]{\bf{Corollary}}
\newtheorem{prop}[thm]{\bf{Proposition}}
\newtheorem{ex}[thm]{\bf{Example}}
\newtheorem{rem}[thm]{\bf{Remark}}
\begin{document}

\title[]{Inexact Stochastic Mirror Descent for two-stage nonlinear stochastic programs}

\maketitle 

\begin{center}
Vincent Guigues\\
School of Applied Mathematics, FGV\\
Praia de Botafogo, Rio de Janeiro, Brazil\\ 
{\tt vguigues@fgv.br}
\end{center}

\date{}

\begin{abstract} We introduce an inexact variant of Stochastic Mirror Descent (SMD), called Inexact Stochastic Mirror Descent (ISMD), to solve
nonlinear two-stage stochastic programs where the second stage problem 
has linear and nonlinear coupling constraints and a 
nonlinear objective function which depends on both first and second
stage decisions. Given a candidate first stage solution and a realization of the second stage random vector, each iteration of ISMD combines a stochastic subgradient descent using a prox-mapping
with the computation of approximate (instead of exact for SMD) primal and dual second stage solutions.
We provide two convergence analysis of ISMD, under two sets of assumptions.
The first convergence analysis is based on the formulas for inexact cuts of value functions of
convex optimization problems shown recently in \cite{guigues2016isddp}.
The second convergence analysis provides a convergence rate (the same as SMD) and relies on
new formulas that we derive for inexact cuts of value functions of convex optimization problems 
assuming that the dual function of the second stage problem for all fixed first stage 
solution and realization of the second stage random vector, is strongly concave. 
We show that this assumption of strong concavity is satisfied for some classes of problems and present the
results of numerical experiments on two simple two-stage problems which show that 
solving approximately the second stage problem for the first iterations of ISMD can help us obtain a
good approximate first stage solution quicker than with SMD.\\
\end{abstract}

\par {\textbf{Keywords:} Inexact cuts for value functions \and Inexact Stochastic Mirror Descent \and  Strong Concavity of the dual function \and Stochastic Programming}.\\

\par AMS subject classifications: 90C15, 90C90.

\section{Introduction}

We are interested in inexact solution methods for two-stage nonlinear stochastic programs
of form 
\begin{equation}\label{defpb}
\left\{
\begin{array}{l}
\min \; f(x_1):=f_1( x_1 ) + \mathcal{Q}(x_1) \\
x_1 \in X_1
\end{array}
\right.
\end{equation}
with $X_1 \subset \mathbb{R}^n$ a convex, nonempty, and compact set, and $\mathcal{Q}(x_1)=\mathbb{E}_{\xi_2}[ \mathfrak{Q}( x_1 , \xi_2 ) ]$ where
$\mathbb{E}$ is the expectation operator, $\xi_2$ is a random vector
with probability distribution $P$ on  $\Xi \subset \mathbb{R}^k$, and
\begin{equation}\label{pbsecondstage}
\mathfrak{Q}( x_1 , \xi_2 ) = 
\left\{
\begin{array}{l}
\min_{x_2} \; f_2( x_2 , x_1 , \xi_2) \\
x_2 \in X_2(x_1 , \xi_2 ):=\{x_2 \in \mathcal{X}_2 : \;A x_2 + B x_1 = b, \;g(x_2, x_1 , \xi_2) \leq 0\}.
\end{array}
\right.
\end{equation}
In the problem above vector $\xi_2$ contains in particular the random elements in matrices
$A, B$, and vector $b$.
Problem \eqref{defpb} is the first stage problem while
problem \eqref{pbsecondstage} is the second stage problem which has abstract constraints ($x_2 \in \mathcal{X}_2$),
and linear ($A x_2 + B x_1 = b$) and nonlinear ($g(x_2, x_1 , \xi_2) \leq 0$) constraints both of which couple first stage
decision $x_1$ and second stage decision $x_2$. Our solution methods are suited for the following framework:
\begin{itemize}
\item[a)] first stage problem \eqref{defpb} is convex;
\item[b)] second stage problem \eqref{pbsecondstage} is convex, i.e., 
$\mathcal{X}_2$ is convex and for every $\xi_2 \in \Xi$ functions 
$f_2(\cdot,\cdot,\xi_2)$ and
$g(\cdot,\cdot,\xi_2)$ are convex;
\item[c)] for every realization ${\tilde \xi}_2$ of $\xi_2$, the primal second stage problem
obtained replacing $\xi_2$ by ${\tilde \xi}_2$ in \eqref{pbsecondstage}
with optimal value $\mathfrak{Q}( x_1 , {\tilde \xi}_2 )$
and its dual (obtained dualizing coupling constraints) are solved approximately.
\end{itemize}
There is a large literature on solution methods for two-stage risk-neutral stochastic programs.
Essentially, these methods can be cast in two categories: (A) decomposition methods based on sampling 
and cutting plane approximations of $\mathcal{Q}$ (which date back to \cite{danglynn},\cite{infanger}) 
and their variants with regularization such as \cite{ruz} and (B) Robust Stochastic Approximation \cite{polyakjud92} and its variants
such as stochastic Primal-Dual subgradient methods \cite{nestioud2010},
Stochastic Mirror Descent (SMD) \cite{nemjudlannem09}, \cite{nemlansh09}, or Multistep Stochastic Mirror Descent (MSMD) \cite{guiguesmstep17}. 
These methods have been extended to solve multistage problems, for instance Stochastic Dual Dynamic 
Programming \cite{pereira}, belonging to class (A), and recently Dynamic Stochastic Approximation \cite{lan2017}, belonging to class (B).

However, for all these methods, it is assumed that second stage problems 
 are solved exactly. This latter assumption is not satisfied when the second stage problem is nonlinear since in this setting
only approximate solutions are available. On top of that, for the first iterations, we still have crude approximations
of the first stage solution and it may be useful to solve inexactly, with less accuracy, the second stage problem for these
iterations and to increase the accuracy of the second stage solutions computed when the algorithm progresses in order to decrease
the overall computational bulk.

Therefore the objective of this paper is to fill a gap considering 
the situation when second stage problems are nonlinear and solved approximately (both primal and dual, see Assumption c) above).
More precisely, to account for Assumption (c), as an extension of the methods from 
class (B) we derive an Inexact Stochastic Mirror Descent (ISMD) algorithm, designed to solve
problems of form \eqref{defpb}. This inexact solution  method is based on an inexact black box for the objective in \eqref{defpb}.
To this end, we compute inexact cuts (affine lower bounding functions) for value function 
$\mathfrak{Q}(\cdot,\xi_2)$ in  \eqref{pbsecondstage}.
For this analysis, we first need formulas for exact cuts (cuts based on exact primal and dual solutions).
We had shown such formulas in \cite[Lemma 2.1]{guiguessiopt2016} using convex analysis tools, in particular
standard calculus on normal and tangeant cones. We derive in Proposition \ref{dervaluefunction} a proof for these formulas
based purely on duality. This is an adaptation of the proof of the formulas we gave in \cite[Proposition 2.7]{guigues2016isddp} for inexact cuts,
considering exact solutions instead of inexact solutions.
To our knowledge, the computation of inexact cuts for value functions has only been discussed in \cite{guigues2016isddp} so far (see Proposition \ref{varprop1}).
We propose in Section \ref{sec:computing}  new formulas for computing inexact cuts based in particular on the strong concavity of the dual function.
 In Section \ref{strongconvdfunction}, we provide, for several classes of problems, conditions ensuring that the dual function of an optimization problem is strongly concave and
give formulas for computing the corresponding constant of strong concavity when possible.
It turns out that our results improve Theorem 10 in \cite{YuNeely2015} (the only reference we are aware of on the strong concavity of the dual function) which proves the strong concavity of the dual function under
stronger assumptions. The tools developped in Sections \ref{strongconvdfunction} and \ref{sec:computing}  allow us to build the inexact black boxes necessary
for the  Inexact Stochastic Mirror Descent (ISMD) algorithm and its convergence analysis presented 
in Section \ref{sec:ismd}. Finally, in Section \ref{sec:simualgorithms}
we report the results of
numerical tests comparing the performance of SMD and ISMD on two simple two-stage nonlinear stochastic programs.\\
\par Throughout the paper, we use the following notation:
\begin{itemize}
\item The domain $\mbox{dom}(f)$ of a function $f:X \rightarrow {\bar{\mathbb{R}}}$ is the set of points in $X$ such that
$f$ is finite: $\mbox{dom}(f)=\{x \in X :  -\infty< f(x) < +\infty\}$.
\item The largest (resp. smallest) eigenvalue of a matrix $Q$ having real-valued eigenvalues is denoted by $\lambda_{\max}(Q)$ (resp. $\lambda_{\min}(Q)$).
\item The $\|\cdot\|_2$ of a matrix $A$ is given by $\|A\|_2 = \max_{x \neq 0} \frac{\|Ax\|_2}{\|x\|_2}$.
\item Diag($x_1,x_2,\ldots,x_n$) is the $n\times n$ diagonal matrix whose entry $(i,i)$ is $x_i$. 
\item For a linear application $\mathcal{A}$, Ker($\mathcal{A}$) is its kernel and Im($\mathcal{A}$) its image.
\item $\langle \cdot, \cdot, \rangle $ is the usual scalar product in $\mathbb{R}^n$: $\langle x , y \rangle = \sum_{i=1}^n x_i y_i$
which induces the norm $\|x\|_2$~$=\sqrt{\sum_{i=1}^n x_i^2}$.
\item Let $f:\mathbb{R}^n \rightarrow \bar{\mathbb{R}}$ be an extended real-valued function.
The Fenchel conjugate $f^*$ of $f$ is the function given by 
$f^*( x^* ) = \sup_{x \in \mathbb{R}^n} \langle x^* , x \rangle - f( x )$. 
\item For functions $f:X \rightarrow Y$ and 
$g:Y \rightarrow Z$, the function $g \circ f: X \rightarrow Z$
is the composition of functions $g$ and 
$f$ given by
$(g \circ f)(x)=g(f(x))$
for every $x \in X$.
\end{itemize}

\section{On the strong concavity of the dual function of an optimization problem} \label{strongconvdfunction}

The study of the strong concavity of the dual function  of an optimization problem on some set
has  applications in numerical optimization.
For instance, the strong concavity of the dual function and the knowledge of the associated constant of
strong concavity are used by the Drift-Plus-Penalty algorithm in \cite{YuNeely2015} 
and by the (convergence proof of) Inexact SMD algorithm presented in Section \ref{sec:ismd} when inexact cuts are computed using
Proposition \ref{defcutctk}.

The only paper we are aware of providing
conditions ensuring this strong concavity property is \cite{YuNeely2015}.
In this section, we prove similar results under weaker assumptions and study
an additional class of problems (quadratic with a quadratic constraint, see Proposition \ref{strongconcquad}).

\subsection{Preliminaries}

In what follows, $X \subset \mathbb{R}^n$ is a nonempty convex set.
\begin{df}[Strongly convex functions]\label{defsconv}  Function $f:X \rightarrow \mathbb{R}\cup \{+\infty\}$ is strongly convex 
with constant of strong convexity $\alpha > 0$  with respect to norm $\|\cdot\|$ if
for every $x, y \in \mbox{dom(f)}$ we have 
$$
f( t x  + (1-t)y ) \leq t f(x) + (1-t)f(y)  - \frac{\alpha t(1-t)}{2} \|y-x\|^2, 
$$
for all $0 \leq t \leq 1$.
\end{df}
We can deduce the following well known characterizations of strongly convex functions $f:\mathbb{R}^n \rightarrow \mathbb{R} \cup \{+\infty\}$ (see for instance \cite{hhlem}):
\begin{prop}\label{charc1scfunc} (i) Function $f: X \rightarrow \mathbb{R}\cup \{+\infty\}$
is strongly convex with constant of strong convexity $\alpha > 0$ 
with respect to norm $\|\cdot\|$ if and only if for every $x, y \in \mbox{dom(f)}$ we have
$$
f(y) \geq f(x) + s^T (y-x)   + \frac{\alpha}{2}\|y-x\|^2, \;\forall s \in \partial f(x).
$$
\par (ii) Function $f:X \rightarrow \mathbb{R}\cup \{+\infty\}$ is strongly convex with constant of strong convexity $\alpha > 0$ 
with respect to norm $\|\cdot\|$
if and only if for every $x, y \in \mbox{dom(f)}$ we have
$$
f(y) \geq f(x) + f'(x;y-x)  + \frac{\alpha}{2}\|y-x\|^2,
$$
where $f'(x;y-x)$ denotes the derivative of $f$ at $x$ in the direction $y-x$.

\par (iii) Let $f: X \rightarrow \mathbb{R}\cup \{+\infty\}$ be differentiable. Then $f$ is strongly convex 
with constant of strong convexity $\alpha > 0$  with respect to norm $\|\cdot\|$ if and only if
for every $x, y \in \mbox{dom(f)}$ we have
$$
(\nabla f(y) - \nabla f (x))^T (y-x) \geq \alpha \|y-x\|^2.
$$
\par (iv) Let $f: X \rightarrow \mathbb{R}\cup \{+\infty\}$ be twice differentiable. Then $f$ is strongly convex on $X \subset \mathbb{R}^n$ with constant of strong convexity $\alpha > 0$ 
with respect to norm $\|\cdot\|$
if and only if for every $x \in \mbox{dom(f)}$ we have
$$
h^T \nabla^2 f(x) h \geq \alpha \|h\|^2, \forall h \in \mathbb{R}^n.
$$
\end{prop}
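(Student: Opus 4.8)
The plan is to reduce all four equivalences to the classical first- and second-order characterizations of ordinary convex functions by means of the auxiliary function
\[
g(x) := f(x) - \frac{\alpha}{2}\|x\|_2^2,
\]
where $\|\cdot\|_2$ is the norm induced by the scalar product. The cornerstone is the identity
\[
t\|x\|_2^2 + (1-t)\|y\|_2^2 - \|t x + (1-t)y\|_2^2 = t(1-t)\|x-y\|_2^2, \quad 0 \leq t \leq 1,
\]
obtained by expanding the squared norms via bilinearity of the scalar product. Substituting it into the definition of $g$ shows that the convexity inequality $g(tx+(1-t)y) \leq t g(x) + (1-t)g(y)$ is equivalent to the strong convexity inequality of Definition \ref{defsconv} for $f$. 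Hence $f$ is strongly convex with constant $\alpha$ if and only if $g$ is convex, and it suffices to translate each classical characterization of the convexity of $g$ back into a statement about $f$.

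To carry out the translation I would use that the quadratic $\frac{\alpha}{2}\|x\|_2^2$ is smooth, so that
\[
\partial g(x) = \partial f(x) - \alpha x, \quad g'(x;d) = f'(x;d) - \alpha \langle x,d\rangle, \quad \nabla g(x) = \nabla f(x) - \alpha x, \quad \nabla^2 g(x) = \nabla^2 f(x) - \alpha I,
\]
the last two under the differentiability assumptions of (iii) and (iv). For part (i), the subgradient inequality $g(y) \geq g(x) + s_g^T(y-x)$ valid for all $s_g \in \partial g(x)$ becomes, after substituting $s_g = s - \alpha x$ with $s \in \partial f(x)$ and regrouping, exactly $f(y) \geq f(x) + s^T(y-x) + \frac{\alpha}{2}\|y-x\|_2^2$; the only computation needed is the recombination $\frac{\alpha}{2}\|y\|_2^2 - \frac{\alpha}{2}\|x\|_2^2 - \alpha x^T(y-x) = \frac{\alpha}{2}\|y-x\|_2^2$. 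Part (ii) is identical with the directional derivative in place of the subgradient. For (iii), the monotonicity characterization $(\nabla g(y)-\nabla g(x))^T(y-x) \geq 0$ of the gradient of a differentiable convex function turns, using $(y-x)^T(y-x) = \|y-x\|_2^2$, into the stated inequality; and for (iv) the second-order characterization $h^T \nabla^2 g(x) h \geq 0$ for all $h$ becomes $h^T \nabla^2 f(x) h \geq \alpha \|h\|_2^2$. In every case both implications come for free, since the master equivalence and each classical convexity characterization are themselves equivalences.

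The step requiring the most care is the master equivalence, and specifically the fact that the reduction is tied to the Euclidean norm: the identity above is the parallelogram law and fails for a general norm, while for a non-Euclidean norm $\frac{\alpha}{2}\|x\|^2$ need not even be differentiable, so the clean relations for $\partial g$, $\nabla g$, and $\nabla^2 g$ would break down. Since parts (iii) and (iv) are phrased in terms of the Euclidean gradient and Hessian, the norm $\|\cdot\|$ in the statement is to be read as the scalar-product norm $\|\cdot\|_2$, and with this reading the four characterizations follow uniformly from the construction above.
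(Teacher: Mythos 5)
The paper does not actually prove this proposition: it is presented as a known fact with a pointer to \cite{hhlem}, so there is no in-paper argument to compare yours against. Your proof is the standard textbook reduction and it is correct: the identity $t\|x\|_2^2+(1-t)\|y\|_2^2-\|tx+(1-t)y\|_2^2=t(1-t)\|x-y\|_2^2$ does show that $\alpha$-strong convexity of $f$ for $\|\cdot\|_2$ is equivalent to convexity of $g=f-\frac{\alpha}{2}\|\cdot\|_2^2$, the calculus rules $\partial g(x)=\partial f(x)-\alpha x$, $\nabla^2 g(x)=\nabla^2 f(x)-\alpha I$, etc., are legitimate because the subtracted quadratic is smooth, and the recombination $\frac{\alpha}{2}\|y\|_2^2-\frac{\alpha}{2}\|x\|_2^2-\alpha x^T(y-x)=\frac{\alpha}{2}\|y-x\|_2^2$ is exactly what is needed to pass from the convexity inequalities for $g$ to the stated inequalities for $f$. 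Your reading of $\|\cdot\|$ as an inner-product norm is consistent with how the proposition is used in the paper (the author fixes $\|\cdot\|=\|\cdot\|_2$ for the section, and the one explicit invocation of part (ii), in Proposition \ref{immediate2}, concerns a scalar-product norm). Two small observations. First, parts (i) and (ii) do not in fact need the Euclidean restriction: one direction follows by letting $t\downarrow 0$ in Definition \ref{defsconv}, and the converse by writing the inequality at $z=tx+(1-t)y$ against $x$ and $y$ and using that $x-z$ and $y-z$ are scalar multiples of $x-y$, so that $t\|x-z\|^2+(1-t)\|y-z\|^2=t(1-t)\|x-y\|^2$ holds for an arbitrary norm; so the general-norm statement of (i)--(ii) is recoverable by a direct argument even though the quadratic-subtraction trick is not available. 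Second, the ``if'' directions of (i) and (ii) are vacuous at points where $\partial f(x)=\emptyset$ or where the directional derivative fails to exist; this looseness is already present in the statement as given, so it is not a defect introduced by your argument.
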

\begin{df}[Strongly concave functions] $f: X \rightarrow \mathbb{R}\cup \{-\infty\}$
is strongly concave with constant of strong concavity $\alpha > 0$ 
with respect to norm $\|\cdot\|$
if and only if $-f$ is strongly convex with constant of strong convexity $\alpha > 0$ 
with respect to norm $\|\cdot\|$.
\end{df}
The following propositions are immediate and will be used in the sequel:
\begin{prop}\label{immediate1} If $f:X \rightarrow \mathbb{R}\cup \{+\infty\}$
is strongly convex with constant of strong convexity $\alpha > 0$ 
with respect to norm $\|\cdot\|$ and $\ell: \mathbb{R}^n \rightarrow \mathbb{R}$
is linear then $f+\ell$ is strongly convex on $X$ with constant of strong convexity $\alpha > 0$ 
with respect to norm $\|\cdot\|$.
\end{prop}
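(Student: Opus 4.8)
The plan is to verify the defining inequality of Definition \ref{defsconv} directly for the function $f+\ell$. The two facts that drive the argument are, first, that a linear map reproduces convex combinations exactly, i.e. $\ell(tx+(1-t)y)=t\ell(x)+(1-t)\ell(y)$ for all $x,y\in\mathbb{R}^n$ and $t\in[0,1]$, so that $\ell$ contributes no gap term; and second, that $\ell$ is finite on all of $\mathbb{R}^n$, whence $\dom(f+\ell)=\dom(f)$ and the inequality need only be checked on $\dom(f)$.

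First I would fix $x,y\in\dom(f)$ and $t\in[0,1]$ and write $(f+\ell)(tx+(1-t)y)$ as the sum $f(tx+(1-t)y)+\ell(tx+(1-t)y)$. Then I would bound the first summand using the strong convexity of $f$ from Definition \ref{defsconv} and rewrite the second summand using linearity of $\ell$. Recombining the $f$ and $\ell$ terms evaluated at $x$ and at $y$ yields $t(f+\ell)(x)+(1-t)(f+\ell)(y)$, while the quadratic deficit $-\frac{\alpha t(1-t)}{2}\|y-x\|^2$ is carried over unchanged. This is precisely the inequality required by Definition \ref{defsconv} for $f+\ell$ with the same constant $\alpha$.

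There is no genuine obstacle here: the result is a one-line consequence of the fact that the strong convexity deficit is insensitive to the addition of an affine term. Alternatively, one could argue through the subgradient characterization in Proposition \ref{charc1scfunc}(i), using $\partial(f+\ell)(x)=\partial f(x)+\{\nabla\ell\}$, but the direct verification from the definition is cleaner and avoids invoking any differentiability or subdifferential calculus.
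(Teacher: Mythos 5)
Your proof is correct and is precisely the direct verification the paper has in mind: the paper states Proposition \ref{immediate1} without proof, declaring it ``immediate,'' and your argument---that $\ell$ reproduces convex combinations exactly and hence contributes no gap term, so the quadratic deficit in Definition \ref{defsconv} carries over unchanged with the same constant $\alpha$---is the intended one-line justification. The remark that $\dom(f+\ell)=\dom(f)$ since $\ell$ is finite everywhere is a sensible bit of care that the paper leaves implicit.
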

\begin{prop}\label{immediate2} Let  $X \subset \mathbb{R}^m, Y \subset \mathbb{R}^n$, be two nonempty convex sets.
Let $\mathcal{A}: X \rightarrow Y$ be a linear operator and let
$f: Y \rightarrow \mathbb{R}\cup \{+\infty\}$ be a strongly convex function 
with constant of strong convexity $\alpha > 0$ 
with respect to a norm  $\|\cdot\|_{n}$ on $\mathbb{R}^n$ induced by scalar product $\langle \cdot, \cdot \rangle_n$ on $\mathbb{R}^n$.
Assume that Ker$(\mathcal{A}^* \circ \mathcal{A} )=\{0\}$.
Then $g= f \circ \mathcal{A}$ is strongly convex on $X$ with constant of strong convexity $\alpha \lambda_{\min}( \mathcal{A}^* \circ \mathcal{A} )$
with respect to norm $\|\cdot\|_{m}$.
\end{prop}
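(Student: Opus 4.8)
The plan is to verify the defining inequality of Definition~\ref{defsconv} directly for $g=f\circ\mathcal{A}$, pushing the strong convexity of $f$ through the linear map $\mathcal{A}$ and then controlling the resulting quadratic term by the smallest eigenvalue of $\mathcal{A}^*\circ\mathcal{A}$.

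First I would fix $x,y\in\mathrm{dom}(g)$ and $t\in[0,1]$, and use linearity of $\mathcal{A}$ to write $g(tx+(1-t)y)=f(t\mathcal{A}x+(1-t)\mathcal{A}y)$. Since $\mathcal{A}x,\mathcal{A}y\in\mathrm{dom}(f)$, applying the strong convexity of $f$ with constant $\alpha$ with respect to $\|\cdot\|_{n}$ gives
\[
g(tx+(1-t)y)\ \le\ t\,g(x)+(1-t)\,g(y)-\frac{\alpha\,t(1-t)}{2}\,\|\mathcal{A}(y-x)\|_{n}^2 .
\]
The second step is to rewrite the quadratic term through the adjoint: by definition of $\mathcal{A}^*$ with respect to the scalar products $\langle\cdot,\cdot\rangle_n$ and $\langle\cdot,\cdot\rangle_m$, one has $\|\mathcal{A}h\|_{n}^2=\langle\mathcal{A}h,\mathcal{A}h\rangle_n=\langle h,(\mathcal{A}^*\circ\mathcal{A})h\rangle_m$ for every $h$. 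The operator $\mathcal{A}^*\circ\mathcal{A}$ is self-adjoint and positive semidefinite, so its eigenvalues are real and nonnegative; the hypothesis $\mathrm{Ker}(\mathcal{A}^*\circ\mathcal{A})=\{0\}$ forces it to be positive definite, whence $\lambda_{\min}(\mathcal{A}^*\circ\mathcal{A})>0$. By the Rayleigh quotient characterization of the smallest eigenvalue, $\langle h,(\mathcal{A}^*\circ\mathcal{A})h\rangle_m\ge\lambda_{\min}(\mathcal{A}^*\circ\mathcal{A})\,\|h\|_m^2$ for all $h$, and taking $h=y-x$ yields $\|\mathcal{A}(y-x)\|_{n}^2\ge\lambda_{\min}(\mathcal{A}^*\circ\mathcal{A})\,\|y-x\|_m^2$. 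Substituting this lower bound into the displayed inequality produces exactly the strong convexity inequality for $g$ with constant $\alpha\,\lambda_{\min}(\mathcal{A}^*\circ\mathcal{A})$.

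The computation is routine; the points requiring care are essentially bookkeeping. I would make sure the adjoint is taken with respect to the prescribed inner products, so that $\|\mathcal{A}h\|_{n}^2$ really equals $\langle h,(\mathcal{A}^*\circ\mathcal{A})h\rangle_m$, and I would note that although $\mathcal{A}$ is stated on $X$, the kernel and eigenvalue assertions refer to its linear extension to $\mathbb{R}^m$. The one genuinely substantive step is deducing $\lambda_{\min}(\mathcal{A}^*\circ\mathcal{A})>0$ from the triviality of the kernel: this is where the hypothesis $\mathrm{Ker}(\mathcal{A}^*\circ\mathcal{A})=\{0\}$ is indispensable, since without it the claimed constant would be $0$ and the conclusion would degenerate to ordinary convexity.
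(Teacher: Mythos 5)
Your proof is correct and follows essentially the same route as the paper: both arguments reduce to the Rayleigh-quotient bound $\|\mathcal{A}h\|_{n}^2=\langle h,(\mathcal{A}^*\circ\mathcal{A})h\rangle_m\ge\lambda_{\min}(\mathcal{A}^*\circ\mathcal{A})\|h\|_m^2$ together with positivity of $\lambda_{\min}$ from the trivial kernel. The only (harmless) difference is that you verify the defining inequality of Definition~\ref{defsconv} directly, whereas the paper passes through the directional-derivative characterization of Proposition~\ref{charc1scfunc}-(ii) and the identity $g'(x;y-x)=f'(\mathcal{A}(x);\mathcal{A}(y-x))$; your version is marginally more self-contained.
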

\begin{proof} For every $x, y \in X$, using Proposition \ref{charc1scfunc}-(ii) we have
$$
f(\mathcal{A}(y)) \geq  f(\mathcal{A}(x)) + f'(\mathcal{A}(x) ; \mathcal{A}(y-x))  + \frac{\alpha}{2} \| \mathcal{A} (y-x) \|_{n}^2 
$$
and since $g'(x;y-x)=f'(\mathcal{A}(x) ; \mathcal{A}(y-x))$, we get
$$
g(y) \geq g(x) + g'(x;y-x) + \frac{1}{2} \alpha \lambda_{\min}( \mathcal{A}^*  \circ \mathcal{A} )  \|y - x\|_{m}^2
$$
with $\alpha \lambda_{\min}( \mathcal{A}^* \circ \mathcal{A} )>0$ ($\lambda_{\min}( \mathcal{A}^* \circ  \mathcal{A} )$
is nonnegative because $\mathcal{A}^*  \circ \mathcal{A} $ is self-adjoint and it cannot be zero because
$\mathcal{A}^* \circ \mathcal{A}$ is nondegenerate).\hfill
\end{proof}
In the rest of this section, we fix $\|\cdot\|=\|\cdot\|_2$ and provide, under some assumptions, the constant
of strong concavity of the dual function of an optimization problem for this norm.\footnote{Using the equivalence between
norms in $\mathbb{R}^n$, we can derive a valid constant of strong concavity for other norms, for instance $\|\cdot\|_{\infty}$ and $\|\cdot\|_1$.}

\subsection{Problems with linear constraints}

Consider the optimization problem 
\begin{equation} \label{pbinit0}
\left\{
\begin{array}{l}
\inf f(x)\\
A x \leq b
\end{array}
\right.
\end{equation}
where $f:\mathbb{R}^n \rightarrow \mathbb{R} \cup \{+\infty\}$,
$b \in \mathbb{R}^q$, and $A$ is a $q \times n$ real matrix.

\par We will use the following known fact, see for instance \cite{Rockafellar}:
\begin{prop}\label{conjstconvex}
Let $f:\mathbb{R}^n \rightarrow \mathbb{R}\cup \{+\infty\}$ be a proper convex lower semicontinuous function. Then
$f^*$ is strongly convex with constant of strong convexity $\alpha>0$ for norm $\|\cdot\|_2$ if and only if $f$ is differentiable and $\nabla f$ is Lipschitz continuous with
constant $1/\alpha$ for norm $\|\cdot\|_2$.
\end{prop}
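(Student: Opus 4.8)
The plan is to prove the two implications separately, using throughout the biconjugation identity $f^{**}=f$ (valid because $f$ is proper convex lsc) together with the Fenchel--Young inversion rule $x^*\in\partial f(x)\iff x\in\partial f^*(x^*)$, both standard for proper convex lsc functions.

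For the direction ``$f$ smooth $\Rightarrow$ $f^*$ strongly convex'' I would argue by a direct minorization. Assume $f$ is differentiable on $\mathbb{R}^n$ with $(1/\alpha)$-Lipschitz gradient, and record the descent inequality $f(z)\le f(u)+\langle\nabla f(u),z-u\rangle+\frac{1}{2\alpha}\|z-u\|_2^2$ for all $u,z$. Fix $a\in\dom(f^*)$ and $u\in\partial f^*(a)$; by inversion this means $a=\nabla f(u)$ and $f^*(a)=\langle u,a\rangle-f(u)$. Then I would lower-bound $f^*(b)=\sup_z[\langle b,z\rangle-f(z)]$ by inserting the descent-lemma upper bound on $f$ and maximizing the resulting concave quadratic in $w=z-u$, namely $\langle b-a,w\rangle-\frac{1}{2\alpha}\|w\|_2^2$. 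The maximization gives exactly
\[
f^*(b)\ge f^*(a)+\langle u,b-a\rangle+\tfrac{\alpha}{2}\|b-a\|_2^2,
\]
which is the subgradient characterization of $\alpha$-strong convexity in Proposition \ref{charc1scfunc}(i). Since this holds for every $b\in\mathbb{R}^n$ and every pair $(a,u)$ with $u\in\partial f^*(a)$, no domain bookkeeping is needed for this half.

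For the converse ``$f^*$ strongly convex $\Rightarrow$ $f$ smooth'' I would first upgrade Proposition \ref{charc1scfunc}(i) to strong monotonicity of $\partial f^*$: adding the subgradient inequality at $a$ (with $u\in\partial f^*(a)$) and at $b$ (with $v\in\partial f^*(b)$) yields $\langle u-v,a-b\rangle\ge\alpha\|a-b\|_2^2$. Inverting via Fenchel--Young ($u\in\partial f^*(a)\iff a\in\partial f(u)$), this reads: for any $a\in\partial f(u)$ and $b\in\partial f(v)$, $\langle u-v,a-b\rangle\ge\alpha\|a-b\|_2^2$, and Cauchy--Schwarz gives $\|a-b\|_2\le\frac{1}{\alpha}\|u-v\|_2$. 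Setting $u=v$ forces $\partial f(u)$ to be a singleton wherever it is nonempty. To promote this into genuine global differentiability I must also establish $\dom(f)=\mathbb{R}^n$, which I would obtain from the growth of $f^*$: fixing $a_0\in\dom(f^*)$ and $s_0\in\partial f^*(a_0)$, strong convexity gives the quadratic minorant $f^*(a)\ge f^*(a_0)+\langle s_0,a-a_0\rangle+\frac{\alpha}{2}\|a-a_0\|_2^2$ for all $a$, so that $f(x)=\sup_a[\langle a,x\rangle-f^*(a)]$ is finite for every $x$ because the quadratic dominates the linear term. A finite convex function on $\mathbb{R}^n$ is continuous and subdifferentiable everywhere, and one with a single-valued subdifferential everywhere is differentiable; the bound $\|a-b\|_2\le\frac{1}{\alpha}\|u-v\|_2$ then says precisely that $\nabla f$ is $(1/\alpha)$-Lipschitz.

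The main obstacle is concentrated in the converse: strong monotonicity of $\partial f^*$ only constrains $f$ on $\mathrm{ran}(\partial f)$, so the step that truly needs convex-analytic care is proving $\dom(f)=\mathbb{R}^n$ from the quadratic minorant of $f^*$ and then passing from ``singleton subdifferential'' to ``differentiable.'' The smooth-to-strongly-convex direction, by contrast, reduces to a single explicit quadratic maximization once the descent lemma and the Fenchel--Young equality are in place.
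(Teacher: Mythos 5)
The paper does not actually prove Proposition \ref{conjstconvex}: it is quoted as a ``known fact'' with a pointer to \cite{Rockafellar}, so there is no in-paper argument to compare against. Your proof is the standard convex-duality argument and, as far as I can check, it is correct. The forward direction (descent lemma plus Fenchel--Young equality at a pair $u\in\partial f^*(a)$, then an explicit maximization of $\langle b-a,w\rangle-\frac{1}{2\alpha}\|w\|_2^2$) yields exactly the subgradient inequality of Proposition \ref{charc1scfunc}(i) with modulus $\alpha$, and the converse correctly assembles the three needed ingredients: strong monotonicity of $\partial f^*$ transported through the inversion rule to get $\|a-b\|_2\le\frac{1}{\alpha}\|u-v\|_2$ for $a\in\partial f(u)$, $b\in\partial f(v)$; the quadratic minorant of $f^*$ forcing $\dom(f^{**})=\dom(f)=\mathbb{R}^n$; and the passage from ``everywhere nonempty singleton subdifferential'' to genuine differentiability of a finite convex function. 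The only point worth a remark is in the forward direction: you verify the subgradient inequality only at points $a$ where $\partial f^*(a)\neq\emptyset$, which matches the paper's characterization (i) as literally stated but leaves boundary points of $\dom(f^*)$ with empty subdifferential unaddressed relative to Definition \ref{defsconv}; this is closed by the standard observation that your inequality shows $f^*-\frac{\alpha}{2}\|\cdot\|_2^2$ is convex (its subdifferential is nonempty on $\mathrm{ri}(\dom f^*)$ and monotone), whence the secant inequality holds on all of $\dom(f^*)$ by lower semicontinuity. This is a one-line patch, not a gap in the approach.
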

\begin{prop}\label{dualfunctionpbinit0}
Let $\theta$ be the dual function of \eqref{pbinit0} given by
\begin{equation}\label{dualfunctionfirst}
\theta( \lambda ) = \displaystyle  \inf_{x \in \mathbb{R}^n} \{f(x) + \lambda^T ( Ax  - b ) \},
\end{equation}
for $\lambda \in \mathbb{R}^q$. Assume that the rows of matrix $A$ are independent, that 
$f$ is convex, differentiable, and $\nabla f$ is Lipschitz continuous with
constant $L\geq 0$ with respect to norm $\|\cdot\|_2$.
Then dual function $\theta$ is strongly concave on
$\mathbb{R}^q$ with constant of strong concavity $ \frac{\lambda_{\min}( A A^T )}{L}$ with respect to norm $\|\cdot\|_2$ on $\mathbb{R}^q$.
\end{prop}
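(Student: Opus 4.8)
The plan is to recognize the dual function $\theta$ as a linear function plus the composition of the Fenchel conjugate $f^*$ with a fixed linear operator, and then to invoke the three preparatory propositions in sequence. First I would rewrite \eqref{dualfunctionfirst} by separating the part of the inner objective that is linear in $x$:
$$\theta(\lambda) = -\lambda^T b + \inf_{x \in \mathbb{R}^n}\{f(x) + (A^T\lambda)^T x\} = -\lambda^T b - \sup_{x \in \mathbb{R}^n}\{(-A^T\lambda)^T x - f(x)\} = -\lambda^T b - f^*(-A^T\lambda).$$
Thus $-\theta(\lambda) = \lambda^T b + f^*(-A^T\lambda)$, and it suffices to prove that $\lambda \mapsto f^*(-A^T\lambda)$ is strongly convex on $\mathbb{R}^q$ with constant $\lambda_{\min}(AA^T)/L$, since adding the linear term $\lambda^T b$ preserves strong convexity by Proposition \ref{immediate1}.

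Next I would establish the strong convexity of $f^*$ itself. The hypotheses that $f$ is convex and differentiable (hence proper and lower semicontinuous, being continuous on all of $\mathbb{R}^n$) together with $\nabla f$ Lipschitz of constant $L$ place us exactly in the setting of Proposition \ref{conjstconvex} with $1/\alpha = L$; hence $f^*$ is strongly convex with constant $1/L$ for $\|\cdot\|_2$. Here I would assume $L > 0$ so that this constant is finite and positive (the degenerate case $L=0$ forces $f$ affine and $\theta$ affine on an affine subspace).

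Then I would write $f^*(-A^T\lambda) = (f^* \circ \mathcal{A})(\lambda)$ with the linear operator $\mathcal{A}: \mathbb{R}^q \rightarrow \mathbb{R}^n$, $\mathcal{A}(\lambda) = -A^T\lambda$, in order to apply Proposition \ref{immediate2}. A short computation of the adjoint gives $\mathcal{A}^*(y) = -Ay$, hence $\mathcal{A}^* \circ \mathcal{A} = AA^T$ as an operator on $\mathbb{R}^q$. Since the rows of $A$ are independent, $AA^T$ is symmetric positive definite, so $\Ker(\mathcal{A}^* \circ \mathcal{A}) = \{0\}$ and $\lambda_{\min}(AA^T) > 0$, which is precisely the nondegeneracy hypothesis of Proposition \ref{immediate2}. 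Applying it yields that $f^* \circ \mathcal{A}$ is strongly convex with constant $(1/L)\,\lambda_{\min}(AA^T)$, and combining with the first step shows $-\theta$ is strongly convex with this same constant, i.e.\ $\theta$ is strongly concave with constant $\lambda_{\min}(AA^T)/L$.

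I do not expect a genuine obstacle here; the argument is essentially a bookkeeping assembly of the earlier results. The only points needing care are verifying the regularity hypotheses of Proposition \ref{conjstconvex} (properness and lower semicontinuity of $f$, which follow from differentiability on $\mathbb{R}^n$) and correctly identifying $\mathcal{A}^* \circ \mathcal{A}$ with $AA^T$ rather than $A^T A$, so that the relevant eigenvalue is $\lambda_{\min}(AA^T)$ living on the $q$-dimensional dual space.
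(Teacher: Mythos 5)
Your proposal is correct and follows essentially the same route as the paper's proof: the identity $\theta(\lambda) = -\lambda^T b - f^*(-A^T\lambda)$ followed by Propositions \ref{conjstconvex}, \ref{immediate2} (with $\mathcal{A}^*\circ\mathcal{A} = AA^T$), and \ref{immediate1}. You spell out the details the paper leaves implicit, and your remark that $L>0$ is needed for the constant $\lambda_{\min}(AA^T)/L$ to make sense is a fair observation about the statement's hypothesis $L \geq 0$.
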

\begin{proof}
The dual function of \eqref{pbinit0} can be written
\begin{equation}\label{thetaintermsoffstar}
\begin{array}{lll}
\theta( \lambda )& =&\displaystyle  \inf_{x \in \mathbb{R}^n} \{f(x) + \lambda^T  ( A x - b ) \} = 
-\lambda^T b - \sup_{x \in \mathbb{R}^n} \{ -x^T A^T \lambda  -f(x) \} \\
& = & - \lambda^T b - f^*( -A^T \lambda ) \mbox{ by definition of }f^*.
\end{array}
\end{equation}
Since the rows of $A$ are independent, matrix $AA^T$ is invertible and $\mbox{Ker}(A A^T)=\{0\}$.
The result follows 
from the above representation of $\theta$ and Propositions \ref{immediate1}, \ref{immediate2}, and \ref{conjstconvex}.\hfill
\end{proof}
The strong concavity of the dual function of \eqref{pbinit0} was shown in
Corollary 5 in \cite{YuNeely2015} assuming that $f$ is second-order continuously differentiable and strongly convex.
Therefore Proposition \ref{dualfunctionpbinit0} (whose proof is very short), which only assumes that $f$ is convex, differentiable, and 
has Lipschitz continuous gradient, 
improves existing results (neither second-order differentiability nor strong convexity is required). 
\if{
Below, we discuss several examples. In particular, in 
Examples \ref{exquadprogconcth} and \ref{genexample}, $f$ may not be
strongly convex.
\begin{ex}[Linear programs] Let $f:\mathbb{R}^n \rightarrow \mathbb{R}$ given by
\begin{equation}
f(x)=c^T x + c_0 
\end{equation}
where $c \in \mathbb{R}^n$, $c_0 \in \mathbb{R}$. Clearly $f$ is convex differentiable with Lipschitz continuous gradients; any
$L \geq 0$ being a valid Lipschitz constant. Proposition \eqref{dualfunctionpbinit0}
tells us that if the rows of $A$ are independent then dual function $\theta$ of \eqref{pbinit0}
given by \eqref{dualfunctionfirst} is strongly concave on $\mathbb{R}^q$.
In this case, the strong concavity can be checked directly computing $\theta$.
Indeed, we have 
$$
f^*(x)=\left\{
\begin{array}{l}
-c_0  \;\mbox{    if }x=c,\\
+\infty  \mbox{ if }x \neq c,
\end{array}
\right.
$$
and plugging this expression of $f^*$ into \eqref{thetaintermsoffstar}, we get\footnote{In this simple case, the dual function
is well known and can also be obtained without using the conjugate of $f$}
$$
\theta(\lambda ) =\left\{
\begin{array}{ll}
-\lambda^T b +c_0 & \mbox{if }A^T \lambda = -c,\\
-\infty & \mbox{if }A^T \lambda \neq -c.
\end{array}
\right.
$$
Therefore if $c \in \mbox{Im}(A^T)$ then there is $\lambda \in \mathbb{R}^q$ such that 
\begin{equation}\label{equac}
 A^T \lambda = - c,
\end{equation}
and if the rows of
$A$ are independent then there is only one $\lambda$, let us call it $\lambda_0$, satisfying \eqref{equac}. In this situation,
the domain of $\theta$ is a singleton: $\mbox{dom}(\theta)=\{\lambda_0\}$, and $\theta$ indeed is strongly convex (see Definition \ref{defsconv}).
If $c \notin \mbox{Im}(A^T )$ then $\mbox{dom}(\theta)=\emptyset$ and $\theta$ is again strongly convex. 
\end{ex}
The example which follows gives a class of problems where the dual function is strongly concave on $\mathbb{R}^q$:
\begin{ex}[Quadratic convex programs]\label{exquadprogconcth} Consider a problem of form \eqref{pbinit0}  
where $f(x)=\frac{1}{2} x^T Q_0 x + a_0^T x + b_0$, 
$Q_0$ is an $n\times n$ nonnull positive semidefinite matrix,
$A$ is a $q \times n$ real matrix, $a_0 \in \mbox{Im}(Q_0)$, and $b_0 \in \mathbb{R}$.
Clearly, $f$ is convex, differentiable, and  $\nabla f$ is Lipschitz continuous with Lipschitz constant
$L=\|Q_0\|_2= \lambda_{\max}(Q_0)>0$ with respect to $\|\cdot\|_2$ on $\mathbb{R}^n$.
If the rows of $A$ are independent, using Proposition \ref{dualfunctionpbinit0}
we obtain that the dual function of \eqref{pbinit0} is strongly concave with constant of
strong concavity $\frac{\lambda_{\min}(A A^T)}{\lambda_{\max}(Q_0)}>0$ with respect to norm $\|\cdot\|_2$ on $\mathbb{R}^q$.
Observe that strong concavity holds in particular if $Q_0$ is not positive definite, in which case $f$ is not strongly convex.

Since $f$ is convex, differentiable, its gradient being Lipschitz continuous with Lipschitz constant
$\lambda_{\max}(Q_0)$, from Proposition \ref{conjstconvex}, we know that $f^*$ is strongly convex with constant of
strong convexity $1/\lambda_{\max}(Q_0)$.
This can be checked by direct computation. Indeed, let
$\lambda_{\max}(Q_0)=\lambda_1(Q_0) \geq \lambda_{2}(Q_0) \geq \ldots \geq \lambda_{r}(Q_0)> \lambda_{r+1}(Q_0) = \lambda_{r+2}(Q_0)=\ldots = \lambda_{n}(Q_0)=0$ be the ordered eigenvalues of $Q_0$ where $r$ is the rank of $Q_0$. 
Let $P$ be a corresponding orthogonal matrix of eigenvectors for $Q_0$, i.e., $\emph{Diag}(\lambda_1(Q_0),\ldots,\lambda_n(Q_0))=P^T Q_0 P$ with $P P^T = P^T P =I_n$.
Defining 
$$
Q_0^{+}=P \emph{Diag}\Big(\frac{1}{\lambda_1(Q_0)},\ldots,\frac{1}{\lambda_{r}(Q_0)},\underbrace{0,\ldots,0}_{\mbox{n-r times}}\Big)P^T,
$$
it is straightforward to check that
\begin{equation}\label{expressionfstar}
f^*(x)=
\left\{
\begin{array}{ll}
-b_0+\frac{1}{2}(x-a_0)^T Q_0^{+}(x-a_0)&\mbox{if }x \in \mbox{Im}(Q_0),\\
+ \infty & \mbox{otherwise},
\end{array}
\right.
\end{equation}
and plugging expression \eqref{expressionfstar} of $f^*$ into \eqref{thetaintermsoffstar}, we get
$$
\theta(\lambda)=
\left\{
\begin{array}{ll}
b_0-\lambda^T b -\frac{1}{2}(a_0 + A^T \lambda)^T Q_0^{+}  (a_0 + A^T \lambda) & \mbox{if }A^T \lambda \in \mbox{Im}(Q_0),\\
-\infty & \mbox{otherwise.}
\end{array}
\right.
$$
If $x'=(x'_1,\ldots,x'_n)$ is the vector of the coordinates of $x$ 
in the basis $(v_1,v_2,\ldots,v_n)$ where $v_i$ is $i$th column of $P=[v_1,v_2,\ldots,v_n]$ (i.e., $(v_1,\ldots,v_r)$ is a basis
of Im($Q_0$) and $(v_{r+1},\ldots,v_n)$ is a basis of Ker($Q_0$)) and writing
$a_0 = \sum_{i=1}^r a'_{0 i} v_i$, we obtain
$$
f^*(x)=
\left\{
\begin{array}{l}
g(P^T x) \mbox{ where } g:\mathbb{R}^n \rightarrow \mathbb{R} \mbox{ is given by }g(x')  = -b_0 + \sum_{i=1}^r \frac{(x'_i - a'_{0 i} )^2}{2 \lambda_i( Q_0 )}
 \mbox{ if }x \in \mbox{Im}(Q_0),\\
+\infty \mbox{ otherwise.}
\end{array}
\right.
$$
Observe that for $x', y' \in \mathbb{R}^r \small{\times} \{ \underbrace{(0,\ldots,0)}_{\mbox{n-r times}} \}$
we have
$$
g(y') \geq g(x') + \nabla g(x')^T (y'-x') + \frac{1}{2 \lambda_1(Q_0)}\|y'-x'\|_2^2
$$
and $g$ is strongly convex with constant of strong convexity $\frac{1}{\lambda_1(Q_0)}$
with respect to norm $\|\cdot\|_2$ on $\mathbb{R}^r \small{\times} \{ \underbrace{(0,\ldots,0)}_{\mbox{n-r times}} \}$.
Recalling that $f^*(x)=g(P^T x)$ for $x \in \mbox{dom}(f^*)=\mbox{Im}(Q_0)$, that
$P^T x \in \mathbb{R}^r \small{\times} \{ \underbrace{(0,\ldots,0)}_{\mbox{n-r times}} \}$ for $x \in \mbox{Im}(Q_0)$,
and using Proposition \ref{immediate2}, we get that $f^*$
is strongly convex with constant of strong convexity
$$
\frac{\lambda_{\min}(P P^T)}{\lambda_1(Q_0)} = \frac{\lambda_{\min}(I_n)}{\lambda_{\max}( Q_0 )} = \frac{1}{\lambda_{\max}( Q_0 )}
$$
with respect to norm $\|\cdot\|_2$.
\end{ex}
\begin{ex}\label{genexample} Let $f(x)=\sum_{k=1}^M \alpha_k f_k(x)$ for $\alpha_k \in \mathbb{R}$
and $f_k: \mathbb{R}^n \rightarrow \mathbb{R}$ convex differentiable with Lipschitz constant
$L_k \geq 0$ with respect to norm $\|\cdot\|_2$ on $\mathbb{R}^n$ for $k=1,\ldots,M$.
Let $A$ be a $q\times n$ matrix with independent rows. Then dual function \eqref{dualfunctionfirst}
of \eqref{pbinit0} is strongly concave on $\mathbb{R}^q$ with constant of strong concavity
$\lambda_{\min}(A A^T)/\sum_{k=1}^M \alpha_k L_k$  with respect to $\|\cdot\|_2$.
\end{ex}
}\fi

\subsection{Problems with quadratic objective and a quadratic constraint}

We now consider the following quadratically constrained quadratic optimization problem
\begin{equation} \label{pbquadratic}
\left\{
\begin{array}{l}
\inf_{x \in \mathbb{R}^n} f(x):=\frac{1}{2}x^T Q_0 x + a_0^T x + b_0\\
g_1(x):=\frac{1}{2}x^T Q_1 x + a_1^T x + b_1 \leq 0,
\end{array}
\right.
\end{equation}
with $Q_0$ positive definite and $Q_1$, positive semidefinite.
The dual function $\theta$ of this problem is known in closed-form: for $\mu \geq 0$, we have
\begin{equation}\label{dualquadf}
\theta(\mu ) = \inf_{x \in \mathbb{R}^n} \{ f(x) + \mu g_1(x) \} =
-\frac{1}{2} \mathcal{A}(\mu)^T \mathcal{Q}(\mu)^{-1} \mathcal{A}(\mu) + \mathcal{B}( \mu ) 
\end{equation}
where
$$
\mathcal{A}(\mu)=a_0 + \mu a_1, \,\mathcal{Q}(\mu)=Q_0 + \mu Q_1, \,\mbox{ and }\mathcal{B}(\mu)=b_0 + \mu_i b_1.
$$
We can show, under some assumptions, that dual function $\theta$ is strongly concave
on some set and 
compute analytically the corresponding constant of strong concavity:
\begin{prop}\label{strongconcquad}
Consider optimization problem \eqref{pbquadratic}.
Assume that $Q_0, Q_1$, are positive definite, that there exists $x_0$ such that $g_1( x_ 0 ) <0$, 
and that $a_0 \neq Q_0 Q_1^{-1} a_1$.
Let $\mathcal{L}$ be any lower bound on the optimal value
of \eqref{pbquadratic} and let ${\bar{\mu}}=(\mathcal{L} - f( x_0 ))/g_1( x_0 ) \geq 0$.
Then the optimal solution of the dual problem 
$$
\max_{\mu \geq 0} \theta( \mu )
$$
is contained in the interval $[0, \bar \mu]$ and the 
dual function $\theta$ given by \eqref{dualquadf} is strongly concave on the interval 
$[0, {\bar{\mu}}]$ with constant of strong concavity $\alpha_D=(Q_1^{-1/2} (a_0-Q_0 Q_1^{-1}a_1))^T (Q_1^{-1/2}  Q_0 Q_1^{-1/2}  + \bar \mu I_n )^{-3} Q_1^{-1/2} (a_0-Q_0 Q_1^{-1}a_1)>0$.
\end{prop}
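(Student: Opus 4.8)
The plan is to prove the two assertions in turn: first the localization of the dual optimizer in $[0,\bar\mu]$, then the strong concavity bound on that interval. For the localization I would use weak duality together with the strict feasibility of $x_0$. For every $\mu\geq 0$, bounding the infimum defining $\theta(\mu)=\inf_x\{f(x)+\mu g_1(x)\}$ by its value at $x_0$ gives $\theta(\mu)\leq f(x_0)+\mu g_1(x_0)$. Since $g_1(x_0)<0$, the right-hand side is strictly decreasing in $\mu$ and equals $\mathcal{L}$ precisely at $\mu=\bar\mu$; hence $\theta(\mu)<\mathcal{L}$ for $\mu>\bar\mu$. As $\mathcal{L}$ is a lower bound on the optimal value of \eqref{pbquadratic} and $x_0$ is a Slater point so strong duality holds, the optimal dual value is $\geq\mathcal{L}$, so no $\mu>\bar\mu$ can be optimal and the dual optimizer lies in $[0,\bar\mu]$. (Feasibility of $x_0$ also gives $f(x_0)\geq\mathcal{L}$, consistent with $\bar\mu\geq 0$.)

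Since $\theta$ is concave and one-dimensional, by Proposition \ref{charc1scfunc}(iv) applied to $-\theta$ on $[0,\bar\mu]$ it suffices to show $-\theta''(\mu)\geq\alpha_D$ there. I would compute through the unconstrained Lagrangian minimizer $x^*(\mu)=-\mathcal{Q}(\mu)^{-1}\mathcal{A}(\mu)$, which is well defined because $\mathcal{Q}(\mu)=Q_0+\mu Q_1\succ 0$ for $\mu\geq 0$. The envelope theorem gives $\theta'(\mu)=g_1(x^*(\mu))$, and differentiating $\mathcal{Q}(\mu)x^*(\mu)=-\mathcal{A}(\mu)$ yields $\dot{x}^*=-\mathcal{Q}(\mu)^{-1}w$ with $w:=\nabla g_1(x^*)=Q_1x^*+a_1$. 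Hence $-\theta''(\mu)=w^T\mathcal{Q}(\mu)^{-1}w\geq 0$.

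The crux is to put $-\theta''$ into the stated closed form. I would factor $\mathcal{Q}(\mu)=Q_1^{1/2}(M+\mu I_n)Q_1^{1/2}$ with $M:=Q_1^{-1/2}Q_0Q_1^{-1/2}\succ 0$, and simplify $w$. Using the identity $I_n-\mu Q_1\mathcal{Q}(\mu)^{-1}=Q_0\mathcal{Q}(\mu)^{-1}$ one rewrites $w=Q_0\mathcal{Q}(\mu)^{-1}a_1-Q_1\mathcal{Q}(\mu)^{-1}a_0$; substituting the factorization and using that $M$ commutes with $(M+\mu I_n)^{-1}$ collapses this to $w=-Q_1^{1/2}(M+\mu I_n)^{-1}u$, where $u:=Q_1^{-1/2}(a_0-Q_0Q_1^{-1}a_1)$. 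Inserting this into $w^T\mathcal{Q}(\mu)^{-1}w$ cancels all factors $Q_1^{\pm 1/2}$ and leaves $-\theta''(\mu)=u^T(M+\mu I_n)^{-3}u$. This cancellation is the main obstacle; the two identities just mentioned are exactly what make it go through.

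Finally, monotonicity finishes the argument. Because $M\succ 0$, the map $\mu\mapsto(M+\mu I_n)^{-3}$ is nonincreasing in the positive semidefinite order on $[0,\bar\mu]$, so $-\theta''(\mu)\geq u^T(M+\bar\mu I_n)^{-3}u=\alpha_D$ for all $\mu\in[0,\bar\mu]$, which by Proposition \ref{charc1scfunc}(iv) gives the strong concavity of $\theta$ on $[0,\bar\mu]$ with constant $\alpha_D$. Positivity $\alpha_D>0$ follows since $a_0\neq Q_0Q_1^{-1}a_1$ forces $u\neq 0$ and $(M+\bar\mu I_n)^{-3}\succ 0$.
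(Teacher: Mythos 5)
Your proof is correct, and it reaches the paper's key identity $-\theta''(\mu)=u^T(M+\mu I_n)^{-3}u$ by a different computational route. The paper first eliminates the linear term of $g_1$ via the change of variable $x=y-Q_1^{-1}a_1$, reads off the closed form $\theta(\mu)=-\tfrac12\bar a_0^T(Q_0+\mu Q_1)^{-1}\bar a_0+(\text{affine in }\mu)$ with $\bar a_0=a_0-Q_0Q_1^{-1}a_1$ from \eqref{dualquadf}, and then differentiates the scalar map $\mu\mapsto -\tfrac12\tilde a_0^T(A+\mu I_n)^{-1}\tilde a_0$ twice using $\frac{d}{dt}\mathcal F(t)^{-1}=-\mathcal F(t)^{-1}\dot{\mathcal F}(t)\mathcal F(t)^{-1}$; you instead keep the original coordinates, use the envelope theorem through the Lagrangian minimizer $x^*(\mu)=-\mathcal Q(\mu)^{-1}\mathcal A(\mu)$ to get $-\theta''=w^T\mathcal Q(\mu)^{-1}w$ with $w=\nabla g_1(x^*)$, and then collapse $w$ to $-Q_1^{1/2}(M+\mu I_n)^{-1}u$ via the identity $I_n-\mu Q_1\mathcal Q(\mu)^{-1}=Q_0\mathcal Q(\mu)^{-1}$ and the commutation of $M$ with $(M+\mu I_n)^{-1}$. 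Both routes then finish identically: $-\theta''$ is nonincreasing on $[0,\bar\mu]$ (your simultaneous-diagonalization remark is the right justification, since $t\mapsto t^{-3}$ is not operator monotone in general but the matrices here commute), so it is bounded below by its value at $\bar\mu$, and Proposition~\ref{charc1scfunc}(iv) concludes. For the localization of the dual optimizer the paper simply cites Remark~2.3.3 of Hiriart-Urruty and Lemar\'echal, whereas you give the short self-contained argument from weak duality, $\theta(\mu)\le f(x_0)+\mu g_1(x_0)$, and strong duality at the Slater point $x_0$; this makes the proposition self-contained at essentially no extra cost. What the paper's change of variables buys is that the closed form of $\theta$ is immediate and the second derivative is a one-line computation; what your version buys is that no reparametrization is needed and the mechanism (the derivative of the dual function is the constraint value at the Lagrangian minimizer) is more transparent.
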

\begin{proof}
Making the change of variable 
$x=y-Q_1^{-1} a_1$, we can rewrite \eqref{pbquadratic} without linear terms in $g_1$ under the form:
$$
\left\{
\begin{array}{l}
\inf_{x \in \mathbb{R}^n} \frac{1}{2}x^T Q_0 x + (a_0-Q_0 Q_1^{-1} a_1)^T x + b_0 
+ \frac{1}{2}a_1^T Q_1^{-1} Q_0 Q_1^{-1}a_1 - a_0^T Q_1^{-1} a_1 \\
\frac{1}{2} x^T Q_1 x +b_1-\frac{1}{2}a_1^T Q_1^{-1}a_1 \leq 0,
\end{array}
\right.
$$
with corresponding dual function given by
$$
\theta( \mu )=-\frac{1}{2}\bar a_0^T (Q_0 + \mu Q_1 )^{-1} \bar a_0 + (b_1 -\frac{1}{2}a_1^T Q_1^{-1} a_1) \mu + b_0 - a_0^T Q_1^{-1} a_1 + \frac{1}{2} a_1^T Q_1^{-1} Q_0 Q_1^{-1} a_1
$$
where we have set $\bar a _0 = a_0 - Q_0 Q_1^{-1} a_1$
(see \eqref{dualquadf}).

Using \cite[Remark 2.3.3, p.313]{hhlem} we obtain that the optimal dual solutions are contained in the interval $[0,\bar \mu]$.
Setting ${\tilde a}_0 =Q_1^{-1/2} \bar a_0$ and $A=Q_1^{-1/2}  Q_0 Q_1^{-1/2}$, we compute the first and second
derivatives of the nonlinear term  $\theta_q( \mu )=- \frac{1}{2}\bar a_0^T (Q_0 + \mu Q_1)^{-1} \bar a_0= -\frac{1}{2}{\tilde a}_0^T ( A + \mu I_n )^{-1} {\tilde a}_0$  of $\theta$ on 
$[0, {\bar{\mu}}]$:
$$
\begin{array}{l}
\theta_q'( \mu )  = \frac{1}{2} {\tilde a}_0^T (A+ \mu I_n )^{-2} {\tilde a}_0\;\mbox{ and }\theta_q''( \mu )  =  -{\tilde a}_0^T (A+ \mu I_n )^{-3} {\tilde a}_0.
\end{array}
$$
For these computations we have used the fact that for $\mathcal{F}:\mathcal{I} \rightarrow $ GL$_{n}(\mathbb{R})$ differentiable on $\mathcal{I} \subset \mathbb{R}$, we have
$\frac{d \mathcal{F}(t)^{-1}}{dt}=-\mathcal{F}(t)^{-1}  \frac{d \mathcal{F}(t)}{dt} \mathcal{F}(t)^{-1}$.
Since $-\theta_q''( \mu )$ is decreasing on $[0, {\bar{\mu}}]$, we get $-\theta_q''( \mu ) \geq \alpha_D = -\theta_q''( {\bar \mu} )$ on $[0, {\bar{\mu}}]$.
This computation, together with Proposition \ref{charc1scfunc}-(iv), shows that
$\theta$ is strongly concave on $[0, {\bar{\mu}}]$ with constant of strong concavity $\alpha_D$. \hfill
\end{proof}

\subsection{General case: problems with linear and nonlinear constraints}

Let us add to problem  \eqref{pbinit0} nonlinear constraints.
More precisely, given $f:\mathbb{R}^n \rightarrow \mathbb{R}$,
a $q \times n$ real matrix $A$, $b \in \mathbb{R}^q$, 
and $g:\mathbb{R}^n \rightarrow \mathbb{R}^p$ with convex component functions
$g_i, i=1,\ldots,p$, we consider  the optimization problem 
\begin{equation} \label{optclassgeneral}
\left\{
\begin{array}{l}
\inf f(x)\\
x \in X, A x \leq  b, g(x) \leq 0.
\end{array}
\right.
\end{equation}
Let $v$ be the value function
of this problem given by
\begin{equation}\label{valuedefv}
v(c)= v(c_1, c_2)=\left\{
\begin{array}{l}
\inf f(x)\\
x \in X, A x - b + c_1 \leq  0, g(x) + c_2 \leq 0,
\end{array}
\right.
\end{equation}
for $c_1 \in \mathbb{R}^q, c_2 \in \mathbb{R}^p$. In the next lemma, we relate the conjugate of $v$ to the dual function 
$$
\theta(\lambda ,\mu ) =   \left\{
\begin{array}{l}
\inf f(x) + \lambda^T ( Ax - b ) + \mu^T g(x)\\
x \in X,
\end{array}
\right.
$$
of this problem:
\begin{lemma}\label{conjv1} If $v^*$ is the conjugate of the value function $v$ then
$v^*(\lambda ,\mu) = -\theta(\lambda , \mu)$ for every $(\lambda, \mu) \in \mathbb{R}_{+}^q \small{\times} \mathbb{R}_{+}^p$.
\end{lemma}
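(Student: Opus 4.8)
The plan is to compute $v^*$ directly from the definition of the Fenchel conjugate and to recognize the resulting expression as $-\theta$. Writing $c=(c_1,c_2)$ and $(\lambda,\mu)$ for the conjugate variable, I would start from
\[
v^*(\lambda,\mu)=\sup_{c_1,c_2}\big\{\lambda^T c_1+\mu^T c_2-v(c_1,c_2)\big\}
\]
and substitute the definition \eqref{valuedefv} of $v$. Since $-v(c_1,c_2)=\sup\{-f(x):\ x\in X,\ Ax-b+c_1\leq 0,\ g(x)+c_2\leq 0\}$, the outer supremum over $c$ and the supremum defining $-v$ can be merged into a single supremum over all triples $(x,c_1,c_2)$ with $x\in X$, $c_1\leq b-Ax$ and $c_2\leq -g(x)$, yielding
\[
v^*(\lambda,\mu)=\sup_{\substack{x\in X\\ c_1\leq b-Ax\\ c_2\leq -g(x)}}\big\{\lambda^T c_1+\mu^T c_2-f(x)\big\}.
\]
This merging of two suprema is unconditional and needs no minimax argument.

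Next I would fix $x\in X$ and carry out the inner maximization over $c_1\leq b-Ax$ and $c_2\leq -g(x)$. This is precisely the step where the hypothesis $(\lambda,\mu)\in\mathbb{R}_+^q\times\mathbb{R}_+^p$ enters: because $\lambda\geq 0$, each term $\lambda_i c_{1,i}$ is maximized over $c_{1,i}\leq(b-Ax)_i$ at $c_{1,i}=(b-Ax)_i$, and likewise for $\mu\geq 0$ and $c_2$. Hence the inner supremum equals $\lambda^T(b-Ax)-\mu^T g(x)$, and substituting back gives
\[
v^*(\lambda,\mu)=\sup_{x\in X}\big\{-f(x)-\lambda^T(Ax-b)-\mu^T g(x)\big\}=-\inf_{x\in X}\big\{f(x)+\lambda^T(Ax-b)+\mu^T g(x)\big\},
\]
which is exactly $-\theta(\lambda,\mu)$.

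The computation is short; the points that require care are (i) the nonnegativity of $(\lambda,\mu)$, without which the inner supremum over $c$ would be $+\infty$ in the unbounded directions and the identity would fail — this is why the statement is restricted to $\mathbb{R}_+^q\times\mathbb{R}_+^p$; and (ii) the bookkeeping of the extended-valued cases. If $v(c_1,c_2)=+\infty$ (an infeasible perturbation), the corresponding feasible set for $x$ is empty and the merged supremum over that slice is $-\infty$, consistent with $-v=-\infty$, so the usual conventions for the conjugate absorb these cases without incident. I expect no genuine obstacle: this is the standard identity that the conjugate of a right-hand-side perturbation (value) function equals the negative of the Lagrangian dual, and the only real content is checking that the sign conventions match those used in defining $v$ and $\theta$.
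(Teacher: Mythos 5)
Your proof is correct and is essentially the paper's own argument: the paper likewise merges the outer optimization over $(c_1,c_2)$ with the inner one over $x$ into a joint problem over $\{(x,c_1,c_2): x\in X,\ Ax-b+c_1\le 0,\ g(x)+c_2\le 0\}$ and then eliminates $(c_1,c_2)$ at the bound, only writing everything in terms of $-v^*$ rather than $v^*$. Your explicit remarks on where nonnegativity of $(\lambda,\mu)$ is used and on the infeasible (extended-value) slices are points the paper leaves implicit, but they do not change the route.
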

\begin{proof}
For $(\lambda, \mu) \in \mathbb{R}_+^q \small{\times} \mathbb{R}_{+}^p$, we have
\begin{eqnarray*}
-v^*(\lambda, \mu) & = & -\sup_{(c_1, c_2) \in  \mathbb{R}^q \small{\times} \mathbb{R}^p } \lambda^T  c_1  + \mu^T c_2 -v(c_1, c_2)\\
& =  & \left\{
\begin{array}{l}
\inf -\lambda^T c_1 - \mu^T c_2 + f(x)\\
x \in X, Ax - b + c_1 \leq  0, g(x) + c_2 \leq 0,\\
c_1 \in \mathbb{R}^q, c_2 \in \mathbb{R}^p,
\end{array}
\right.\\
& =  & \left\{
\begin{array}{l}
\inf f(x) + \lambda^T ( Ax - b ) + \mu^T g(x) \\
x \in X,
\end{array}
\right.\\
& = & \theta( \lambda, \mu).
\end{eqnarray*}\hfill
\end{proof}
From Lemma \ref{conjv1} and Proposition \ref{conjstconvex}, we obtain that dual function $\theta$ of problem \eqref{optclassgeneral}
is strongly concave with constant $\alpha$ with respect to norm $\|\cdot\|_2$ on $\mathbb{R}^{p+q}$
if and only if the value function $v$ given by \eqref{valuedefv} is differentiable and $\nabla v$  is Lipschitz continuous with
constant $1/\alpha$ with respect to norm $\|\cdot\|_2$ on $\mathbb{R}^{p+q}$. Using Lemma 2.1 in \cite{guiguessiopt2016} the subdifferential of the value function is the set of optimal dual solutions of \eqref{valuedefv}.
Therefore $\theta$ is strongly concave with constant $\alpha$ with respect to norm $\|\cdot\|_2$ on $\mathbb{R}^{p+q}$ if and only if the value function is differentiable and the dual solution
of \eqref{valuedefv} seen as a function of $(c_1,c_2)$ is Lipschitz continuous with Lipschitz constant $1/\alpha$ with respect to norm $\|\cdot\|_2$ on $\mathbb{R}^{p+q}$.

We now provide conditions ensuring that the dual function is strongly concave in a neighborhood of the optimal dual solution.
\begin{thm}\label{thmstrongconcloc} Consider the optimization  problem 
\begin{equation}\label{defpblinnonlinstrongconc}
\inf_{x \in \mathbb{R}^n} \{ f(x) :  Ax \leq b, g_i(x) \leq 0, i=1,\ldots,p\}.
\end{equation}
We assume that
\begin{itemize}
\item[(A1)] $f:\mathbb{R}^n \rightarrow \mathbb{R} \cup \{+\infty\}$ is strongly convex and has Lipschitz continuous gradient;
\item[(A2)] $g_i:\mathbb{R}^n \rightarrow \mathbb{R}\cup \{+\infty\}, i=1,\ldots,p$, are convex and have Lipschitz continuous gradients;
\item[(A3)] if $x_*$ is the optimal solution of \eqref{defpblinnonlinstrongconc} then
the rows of matrix  
 $\left(\begin{array}{c}A \\ J_g(x_* )\end{array}  \right)$ are linearly independent
 where $J_g(x)$ denotes the Jacobian matrix of $g(x)=(g_1(x),\ldots,g_p(x))$ at $x$;
\item[(A4)] there is $x_0 \in \mbox{ri}(\{g \leq 0\})$ such that $A x_0 \leq b$. 
\end{itemize}
Let $\theta$ be the dual function of this problem:
\begin{equation}\label{defthetadualgen}
\theta(\lambda , \mu ) = 
\left\{
\begin{array}{l}
\inf \;f(x) + \lambda^T ( Ax - b )  + \mu^T g(x) \\
x \in \mathbb{R}^n.
\end{array}
\right.
\end{equation}
Let $(\lambda_* , \mu_* ) \geq 0$ be an optimal solution of the dual problem
$$
\sup_{\lambda \geq 0, \mu \geq 0} \theta(\lambda , \mu ).
$$
Then there is some  neighborhood $\mathcal{N}$ of $(\lambda_* , \mu_* )$ such that $\theta$ is strongly concave on $\mathcal{N} \cap \mathbb{R}_{+}^{p+q}$.
\end{thm}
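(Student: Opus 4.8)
The plan is to work directly with the dual function written as $\theta(w)=\inf_{x\in\mathbb{R}^n}L(x,w)$, where I abbreviate $w=(\lambda,\mu)$, set $G(x)=(Ax-b,\,g(x))$, and write $L(x,w)=f(x)+w^{\transp}G(x)$. The first observation is that, because each $g_i$ is convex by (A2) and we restrict to $w\in\mathbb{R}_{+}^{p+q}$, the term $w^{\transp}G(x)$ is convex in $x$ and affine in $w$; hence by Proposition \ref{immediate1} the map $L(\cdot,w)$ is strongly convex in $x$ with the constant $\alpha_f>0$ furnished by the strong convexity of $f$ in (A1). Thus the inner infimum is attained at a unique minimizer $x(w)$, and Danskin's theorem identifies $G(x(w))$ as the unique supergradient of $\theta$ at $w$, so $\theta$ is differentiable near $w_*$ with $\nabla\theta(w)=G(x(w))$. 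Slater's condition (A4) guarantees existence of an optimal multiplier $w_*=(\lambda_*,\mu_*)$, and the KKT relation $\nabla_xL(x_*,w_*)=0$ identifies $x(w_*)$ with the unique primal optimum $x_*$.

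Next I would establish a one-sided monotonicity estimate for $\nabla\theta$. Taking $w_1,w_2\in\mathcal{N}\cap\mathbb{R}_{+}^{p+q}$ with minimizers $x_i=x(w_i)$ and applying the strong-convexity inequality of Proposition \ref{charc1scfunc}-(i) to $L(\cdot,w_1)$ at its minimizer $x_1$ and, symmetrically, to $L(\cdot,w_2)$ at $x_2$, then adding the two inequalities and using that $L$ is affine in $w$, all the $L$-values telescope into $\nabla\theta(w_1)-\nabla\theta(w_2)=G(x_1)-G(x_2)$. This gives the clean bound
$$
(\nabla\theta(w_1)-\nabla\theta(w_2))^{\transp}(w_1-w_2)\ \leq\ -\alpha_f\,\|x_1-x_2\|_2^2 .
$$

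The heart of the argument, and the step I expect to be the main obstacle, is to convert the right-hand side into $-c\,\|w_1-w_2\|_2^2$ for a constant $c>0$ uniform over a neighborhood, i.e.\ to show the solution map satisfies $\|x_1-x_2\|_2\geq c'\|w_1-w_2\|_2$ locally. For this I would subtract the two first-order conditions $\nabla f(x_i)+M(x_i)^{\transp}w_i=0$, where $M(x)=\left(\begin{array}{c}A\\ J_g(x)\end{array}\right)$, and isolate $M(x_2)^{\transp}(w_1-w_2)$; the remaining terms are bounded in norm by $(L_f+L_g\|\mu_1\|)\|x_1-x_2\|_2$ using the Lipschitz continuity of $\nabla f$ and of the $\nabla g_i$ from (A1)--(A2). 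Here assumption (A3) is decisive: since $M(x_*)$ has linearly independent rows, $\lambda_{\min}(M(x_*)M(x_*)^{\transp})>0$, and by continuity of $J_g$ this lower bound (up to a factor) persists for $x$ in a small ball around $x_*$, so that $\|M(x_2)^{\transp}(w_1-w_2)\|_2\geq\gamma\|w_1-w_2\|_2$ with $\gamma>0$. Combining the two displays yields $\|x_1-x_2\|_2\geq c'\|w_1-w_2\|_2$.

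Finally I would fix $\mathcal{N}$ small enough that $x(w)$ remains in that ball — which uses the complementary \emph{upper} Lipschitz bound $\|x_1-x_2\|_2\leq(C_G/\alpha_f)\|w_1-w_2\|_2$ obtained from the monotonicity estimate and Cauchy--Schwarz, together with $x(w_*)=x_*$ and continuity — and that $\|\mu_1\|$ stays bounded. On such a neighborhood the two estimates combine to
$$
(\nabla\theta(w_1)-\nabla\theta(w_2))^{\transp}(w_1-w_2)\ \leq\ -\alpha_f (c')^2\,\|w_1-w_2\|_2^2 ,
$$
and applying Proposition \ref{charc1scfunc}-(iii) to $-\theta$ on the convex set $\mathcal{N}\cap\mathbb{R}_{+}^{p+q}$ shows $\theta$ is strongly concave there. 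One can anticipate this result by noting that, when $f$ and $g$ are twice differentiable, an envelope/implicit-function computation gives $\nabla^2\theta=-MH^{-1}M^{\transp}$ with $H=\nabla^2_{xx}L\succeq\alpha_f I$, which is negative definite precisely because $M$ has full row rank by (A3); the monotonicity argument above is the first-order substitute that avoids assuming second derivatives.
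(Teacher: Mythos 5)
Your proposal is correct and follows essentially the same route as the paper's proof: identify $\nabla\theta(w)=G(x(w))$ via the unique minimizer, establish the strong monotonicity bound $-(\nabla\theta(w_1)-\nabla\theta(w_2))^{\transp}(w_1-w_2)\geq\alpha_f\|x_1-x_2\|_2^2$, and then convert $\|x_1-x_2\|_2$ into $\|w_1-w_2\|_2$ by subtracting the first-order conditions, using the Lipschitz gradients from (A1)--(A2), a local bound on $\|\mu_1\|$, and the uniform lower bound on $\lambda_{\min}(M(x)M(x)^{\transp})$ near $x_*$ supplied by (A3). The only (cosmetic) differences are that you obtain the monotonicity estimate by telescoping the two strong-convexity inequalities rather than via the convexity of the $g_i$ and the optimality conditions, and you localize $x(w)$ through an upper Lipschitz bound instead of the paper's subsequence continuity argument.
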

\begin{proof}
Due to (A1) the optimization problem \eqref{defthetadualgen} has a unique
optimal solution that we denote by $x(\lambda, \mu)$.
Assumptions (A2) and  (A3) imply that there is some neighborhood $\mathcal{V}_{\varepsilon}(x_*)=\left\{x \in \mathbb{R}^n : \|x - x_*\|_{2} \leq \varepsilon \right\}$ 
of $x_*$ for some $\varepsilon>0$
such that 
the rows of matrix $\left(\begin{array}{c}A \\ J_g(x )\end{array}  \right)$ are independent for $x$ in $\mathcal{V}_{\varepsilon}( x_* )$.

We argue that $(\lambda,\mu) \rightarrow x(\lambda,\mu)$ is continuous on $\mathbb{R}^q \small{\times} \mathbb{R}^p$. Indeed, let 
$(\bar \lambda, \bar \mu) \in \mathbb{R}^q \small{\times} \mathbb{R}^p$ and take a sequence $(\lambda_k,\mu_k)$ converging to 
$(\bar \lambda, \bar \mu)$. We want to show that $x(\lambda_k, \mu_k)$ converges to $x(\bar \lambda, \bar \mu)$.
Take an arbitrary accumulation point $\bar x$ of the sequence $x(\lambda_k, \mu_k)$, i.e., $\bar x = \lim_{k \rightarrow +\infty} x(\lambda_{\sigma(k)}, \mu_{\sigma(k)})$
for some subsequence $x(\lambda_{\sigma(k)}, \mu_{\sigma(k)})$ of $x(\lambda_{k}, \mu_{k})$. Then by definition of $x(\lambda_{\sigma(k)}, \mu_{\sigma(k)})$, for every $x \in \mathbb{R}^n$
and every $k \geq 1$ we have
$$
f(x(\lambda_{\sigma(k)}, \mu_{\sigma(k)}) ) + \lambda_{\sigma(k)}^T (A x(\lambda_{\sigma(k)}, \mu_{\sigma(k)}) -b) + 
\mu_{\sigma(k)}^T g( x(\lambda_{\sigma(k)}, \mu_{\sigma(k)})) \leq f(x) + \lambda_{\sigma(k)}^T (Ax - b) + \mu_{\sigma(k)}^T g(x).
$$
Passing to the limit in the inequality above and using the continuity of $f$ and $g_i$ we obtain for all $x \in \mathbb{R}^n$:
$$
f(\bar x) + {\bar \lambda}^T (A \bar x -b) + {\bar \mu}^T g(\bar x) \leq f(x) + {\bar \lambda}^T (Ax - b) + {\bar \mu}^T g(x),
$$
which shows that $\bar x = x(\bar \lambda , \bar \mu)$. Therefore there is only one accumuation point 
$\bar x = x(\bar \lambda , \bar \mu)$ for the sequence $x(\lambda_k, \mu_k)$
which shows that this sequence converges to $x(\bar \lambda , \bar \mu)$.
Consequently, we have shown that $(\lambda,\mu) \rightarrow x(\lambda,\mu)$ is continuous on $\mathbb{R}^q \small{\times} \mathbb{R}^p$.
This implies that there  is a neighborhood 
$
\displaystyle \mathcal{N}(\lambda_*,\mu_*)$
of $(\lambda_* , \mu_*)$ such that for $(\lambda, \mu) \in \mathcal{N}(\lambda_*,\mu_*)$ we have 
$\|x(\lambda,\mu)-x(\lambda_*, \mu_*)\|_2  \leq \varepsilon$.
Moreover, due to (A4), we have $x(\lambda_*, \mu_*)=x_*$.
It follows that for $(\lambda, \mu) \in \mathcal{N}(\lambda_*,\mu_*)$ we have 
$\|x(\lambda,\mu)-x(\lambda_*, \mu_*)\|_2 = \|x(\lambda,\mu)-x_* \|_2 \leq \varepsilon$ which in turn implies that 
the rows of matrix $\left(\begin{array}{c}A \\ J_g(x(\lambda , \mu  ) )\end{array}  \right)$ are independent.
We now show that $\theta$ is strongly concave on $\mathcal{N}(\lambda_*,\mu_*) \cap \mathbb{R}_{+}^{p+q}$.

Take $(\lambda_1, \mu_1)$, $(\lambda_2 , \mu_2 )$ in $\mathcal{N}(\lambda_*,\mu_*) \cap \mathbb{R}_{+}^{p+q}$ and denote 
$x_1 = x(\lambda_1, \mu_1)$ and $x_2 = x(\lambda_2 , \mu_2 )$.
The optimality conditions give 
\begin{equation}\label{optconditionsx1x2}
\begin{array}{l}
\nabla f(x_1 ) + A^T  \lambda_1 + J_g( x_1 )^T \mu_1 = 0,\\
\nabla f(x_2 ) + A^T  \lambda_2 + J_g( x_2 )^T \mu_2 = 0.
\end{array}
\end{equation}
Recall that \eqref{defthetadualgen} has a unique solution 
and therefore $\theta$ is differentiable. The gradient of $\theta$ is given by (see for instance Lemma 2.1 in \cite{guiguessiopt2016}) 
$$
\nabla \theta( \lambda  , \mu ) = \left(
\begin{array}{l}
A x(\lambda, \mu ) - b \\
g( x(\lambda, \mu ) )
\end{array}
\right)
$$
and we obtain, using the notation $\langle x , y \rangle = x^T y$:
\begin{equation} \label{firstsctheta}
- \left \langle \nabla \theta ( \lambda_2, \mu_2 ) - \nabla \theta ( \lambda_1, \mu_1 ) , \left( \begin{array}{c} \lambda_2 - \lambda_1 \\ \mu_2 - \mu_1 \end{array} \right) \right \rangle = 
- \langle A ( x_2 - x_1 ) , \lambda_2 - \lambda_1 \rangle - \langle g(x_2 ) - g(x_1) , \mu_2 - \mu_1 \rangle.
\end{equation}
By convexity of constraint functions we can write for $i=1,\ldots,p$:
\begin{equation} \label{convexgi}
\begin{array}{ll}
g_i( x_2 ) \geq  g_i( x_1 ) + \langle \nabla g_i (x_1 ) , x_2 - x_1 \rangle & (a) \\
g_i( x_1 ) \geq  g_i( x_2 ) + \langle \nabla g_i (x_2 ) , x_1 - x_2 \rangle. & (b)
\end{array}
\end{equation}
Multiplying \eqref{convexgi}-(a) by $\mu_1(i) \geq 0$ and \eqref{convexgi}-(b) by $\mu_2(i) \geq 0$ we obtain
\begin{equation}\label{convexsumm}
-\langle g(x_2 ) - g(x_1) , \mu_2 - \mu_1 \rangle \geq \langle J_g (x_1 )^T \mu_1 - J_g( x_2 )^T \mu_2 , x_2 - x_1 \rangle. 
\end{equation}
Recalling (A1), we can find $0 \leq L(f)<+\infty$ such that for all $x,y \in \mathbb{R}^n$: 
\begin{equation}\label{lipgradf}
\|\nabla f(y) - \nabla f (x)\|_2 \leq L(f) \|y-x\|_2.
\end{equation}
Using \eqref{firstsctheta} and \eqref{convexsumm} and denoting by $\alpha>0$ the constant of strong convexity of $f$ with respect to norm $\|\cdot\|_2$ we get:
{\small{
\begin{equation}\label{secondsctheta}
\begin{array}{lcl}
- \left \langle \nabla \theta ( \lambda_2, \mu_2 ) - \nabla \theta ( \lambda_1, \mu_1 ) , \left( \begin{array}{c} \lambda_2 - \lambda_1 \\ \mu_2 - \mu_1 \end{array} \right) \right \rangle 
& \geq &  - \langle x_2 - x_1 , A^T (\lambda_2 - \lambda_1 )\rangle 
+ \langle J_g (x_1 )^T \mu_1 - J_g( x_2 )^T \mu_2 , x_2 - x_1 \rangle,\\
& \stackrel{\eqref{optconditionsx1x2}}{=} & \langle x_2 - x_1 , \nabla f(x_2 ) - \nabla f(x_1 ) \rangle  \\
& \geq & \alpha \| x_2   -   x_1  \|_2^2\mbox{ by strong convexity of }f,\\
& \geq & \frac{\alpha}{L(f)^2} \|\nabla f( x_2 )  -  \nabla f ( x_1 ) \|_2^2\mbox{ using }\eqref{lipgradf},\\
& \stackrel{\eqref{optconditionsx1x2}}{=} & \frac{\alpha}{L(f)^2} \| 
\underbrace{
\left(  A^T \; J_g( x_2 )^T \right) \left(
\begin{array}{c}
\lambda_2 - \lambda_1 \\
\mu_2 - \mu_1
\end{array}
\right)}_{a}  + \underbrace{(J_g(x_2) - J_g(x_1) )^T \mu_1}_{b}  \|_2^2.\\
\end{array}
\end{equation}
}}
Now recall that for every $x \in \mathcal{V}_\varepsilon( x_*)$
the rows of the matrix $\left(\begin{array}{c}A \\ J_g(x)\end{array}  \right)$ are independent
and therefore the matrix 
$\left(\begin{array}{c}A \\ J_g(x)\end{array}  \right) \left(\begin{array}{c}A \\ J_g(x)\end{array}  \right)^T$
is invertible.
Moreover, the function $x \rightarrow \lambda_{\min}\left(  \left(\begin{array}{c}A \\ J_g(x)\end{array}  \right) \left(\begin{array}{c}A \\ J_g(x)\end{array}  \right)^T   \right)$
is continuous (due to (A2)) and positive on the compact set $\mathcal{V}_\varepsilon( x_*)$.
It follows that we can define 
$$
\underline{\lambda}_{\varepsilon}( x_* ) = \displaystyle  \min_{x \in \mathcal{V}_\varepsilon( x_*)} 
\lambda_{\min}\left(  \left(\begin{array}{c}A \\ J_g(x)\end{array}  \right) \left(\begin{array}{c}A \\ J_g(x)\end{array}  \right)^T   \right),
$$
and $\underline{\lambda}_{\varepsilon}( x_* )>0$.
Since $x_2 \in \mathcal{V}_{\varepsilon}( x_* )$, we deduce that
\begin{equation}\label{bounda}
\|a\|_2 \geq \sqrt{\underline{\lambda}_{\varepsilon}( x_* ) }\left\| \left(  \begin{array}{c} \lambda_2 - \lambda_1 \\ \mu_2 - \mu_1 \end{array} \right) \right\|_2.
\end{equation}
Recalling that $(\lambda_1, \mu_1)$ is in $\mathcal{N}(\lambda_*,\mu_*)$, 
there is $\eta>0$ such that 
\begin{equation}\label{mu1bound}
\|\mu_1 \|_1 \leq U_{\eta}(\mu_*):=\|\mu_*\|_1 + \eta.
\end{equation}
Due to (A2), there is $L(g)\geq 0$ such that for every $x,y \in \mathbb{R}^n$, we have
$$
\|\nabla g_i(y) - \nabla g_i(x)\|_2 \leq L(g) \|y-x\|_2,1,\ldots,p.
$$
Combining  this relation with \eqref{mu1bound}, we get
\begin{equation}\label{boundb}
\|b\|_2 \leq  \| \mu_1 \|_1 L( g ) \| x_2 - x_1 \|_2 \leq L( g)  U_{\eta}(\mu_*) \|x_2 - x_1\|_2. 
\end{equation}
Therefore 
$$
\|a+b\|_2 \geq \|  a \|_2 - \|b\|_2 \geq \sqrt{\underline{\lambda}_{\varepsilon}( x_* ) }  \left\| \left(  \begin{array}{c} \lambda_2 - \lambda_1 \\ \mu_2 - \mu_1 \end{array} \right) \right\|_2 - L( g)  U_{\eta}(\mu_*) \|x_2 - x_1  \|_2
$$
and combining this relation with \eqref{secondsctheta} we obtain
$$
\|x_2 - x_1\|_2 \geq \frac{1}{L(f)} \left[ \sqrt{\underline{\lambda}_{\varepsilon}( x_* ) }  \left\| \left(  \begin{array}{c} \lambda_2 - \lambda_1 \\ \mu_2 - \mu_1 \end{array} \right) \right\|_2 - L( g)  U_{\eta}(\mu_*)  \|x_2 - x_1  \|_2     \right]
$$
which gives
\begin{equation}\label{finalrel}
 \|x_2 - x_1\|_2 \geq \frac{   \sqrt{\underline{\lambda}_{\varepsilon}( x_* )}}  {L(f) +L(g) U_{\eta}(\mu_*) } \left\| \left(  \begin{array}{c} \lambda_2 - \lambda_1 \\ \mu_2 - \mu_1 \end{array} \right) \right\|_2.
\end{equation}
Plugging \eqref{finalrel} into \eqref{secondsctheta} we get 
$$
\begin{array}{lcl}
- \left \langle \nabla \theta ( \lambda_2, \mu_2 ) - \nabla \theta ( \lambda_1, \mu_1 ) , \left( \begin{array}{c} \lambda_2 - \lambda_1 \\ \mu_2 - \mu_1 \end{array} \right) \right \rangle 
& \geq &  \frac{\alpha \underline{\lambda}_{\varepsilon}( x_* )   }{(L(f) + L(g) U_{\eta}(\mu_*)  )^2}
\left\| \left(  \begin{array}{c} \lambda_2 - \lambda_1 \\ \mu_2 - \mu_1 \end{array} \right) \right\|_2^2.
\end{array}
$$
Using Proposition \ref{charc1scfunc}-(iii), the relation above shows that
 $\theta$ is strongly concave on $\mathcal{N}(\lambda_*,\mu_*) \cap \mathbb{R}_{+}^{p+q}$ with constant of strong concavity 
 $\frac{\alpha \underline{\lambda}_{\varepsilon}( x_* )   }{(L(f) + L(g) U_{\eta}(\mu_*)  )^2}$
with respect to norm $\|\cdot\|_2$.
\hfill
\end{proof}
\if{
It is interesting to notice that inequalities similar to \eqref{secondsctheta} can be derived without the assumption of strong convexity of $f$.
For that, we use the fact that a convex function with Lipschitz continuous gradient is co-coercive meaning that it satisfies
$$
\langle x_2 - x_1 , \nabla f(x_2 ) - \nabla f(x_1 ) \rangle  \geq \frac{1}{L(f)}\|\nabla f( x_2 )  -  \nabla f ( x_1 )\|_2^2
$$
for all $x_1,x_2$, where $L(f)$ was defined above.
Therefore, without the assumption of strong convexity of $f$, using \eqref{firstsctheta} and \eqref{convexsumm} we get
{\small{
$$
\begin{array}{lcl}
- \left \langle \nabla \theta ( \lambda_2, \mu_2 ) - \nabla \theta ( \lambda_2, \mu_2 ) , \left( \begin{array}{c} \lambda_2 - \lambda_1 \\ \mu_2 - \mu_1 \end{array} \right) \right \rangle 
& \geq &  - \langle x_2 - x_1 , A^T (\lambda_2 - \lambda_1 )\rangle 
+ \langle J_g (x_1 )^T \mu_1 - J_g( x_2 )^T \mu_2 , x_2 - x_1 \rangle,\\
& \stackrel{\eqref{optconditionsx1x2}}{\geq} & \langle x_2 - x_1 , \nabla f(x_2 ) - \nabla f(x_1 ) \rangle  \\
& \geq & \frac{1}{L(f)} \|\nabla f( x_2 )  -  \nabla f ( x_1 ) \|_2^2\mbox{ by co-coercivity of }f,\\
& \stackrel{\eqref{optconditionsx1x2}}{=} & \frac{1}{L(f)} \| 
\underbrace{
\left(  A^T \; J_g( x_2 )^T \right) \left(
\begin{array}{c}
\lambda_2 - \lambda_1 \\
\mu_2 - \mu_1
\end{array}
\right)}_{a}  + \underbrace{(J_g(x_2) - J_g(x_1) )^T \mu_1}_{b}  \|_2^2.
\end{array}
$$
}}
}\fi
The local strong concavity of the dual function of \eqref{defpblinnonlinstrongconc} was shown recently in
Theorem 10 in \cite{YuNeely2015} assuming (A3), assuming instead of (A1) that $f$ is strongly convex and second-order continuously differentiable
(which is stronger than (A1)), and assuming instead of (A2) that 
$g_i,i=1,\ldots,p$, are convex second-order continuously differentiable, which is stronger than (A2).\footnote{Note that
we used (A4) to ensure that $x(\lambda_*, \mu_*) = x_*$, which is also used in the proof of Theorem 10 in \cite{YuNeely2015}.} 
Therefore Theorem \ref{thmstrongconcloc} gives a new proof of the local strong concavity of the dual function
and improves existing results.

\section{Computing inexact cuts for value functions of convex optimization problems}\label{sec:computing}

\subsection{Preliminaries}

Let $\mathcal{Q}: X\rightarrow \mathbb{R} \cup \{+\infty\}$ be the value function given by
\begin{equation} \label{vfunctionq}
\mathcal{Q}(x)=\left\{
\begin{array}{l}
\inf_{y \in \mathbb{R}^{n}} \;f(y,x)\\
y \in S(x):=\{y\in Y \;:\;Ay+Bx=b,\;g(y,x)\leq 0\}.
\end{array}
\right.
\end{equation}
Here, and in all this section, $X \subseteq \mathbb{R}^m$ and $Y \subseteq \mathbb{R}^n$ are nonempty, compact, and convex
sets, and $A$ and $B$ are respectively $q \small{\times} n$ and $q \small{\times} m$ real matrices.
We will make the following assumptions:\footnote{Note that (H1) and (H2) imply the convexity of $\mathcal{Q}$ given
by \eqref{vfunctionq}. Indeed,
let $x_1, x_2 \in X$, $0 \leq t \leq 1$,
and $y_1 \in S( x_1), y_2 \in S( x_2),$
such that $\mathcal{Q}( x_1)=f(y_1, x_1)$
and $\mathcal{Q}( x_2)=f(y_2, x_2)$.
By convexity of $g$ and $Y$, we have that 
have $ty_1 +(1-t)y_2 \in S(tx_1 + (1-t)x_2)$
and therefore $\mathcal{Q}(tx_1 +(1-t)x_2)
\leq f(ty_1 + (1-t)y_2,t x_1 + (1-t)x_2) \leq t f(y_1,x_1)
+(1-t)f(y_2,x_2)=t\mathcal{Q}(x_1)+(1-t)\mathcal{Q}(x_2)$
where for the last inequality we have used
the convexity of $f$.}
\begin{itemize}
\item[(H1)] $f:\mathbb{R}^n \small{\times} \mathbb{R}^m \rightarrow \mathbb{R} \cup \{+\infty\}$ is 
lower semicontinuous, proper, and convex.
\item[(H2)] For $i=1,\ldots,p$, the $i$-th component of function
$g(y, x)$ is a convex lower semicontinuous function
$g_i:\mathbb{R}^n \small{\times} \mathbb{R}^m \rightarrow \mathbb{R} \cup \{+\infty\}$.
\end{itemize}

In what follows, we say that $\mathcal{C}$ is a cut for $\mathcal{Q}$ on $X$ if $\mathcal{C}$ is an affine
function of $x$ such that $\mathcal{Q}(x) \geq \mathcal{C}(x)$ for all $x \in X$. We say that the cut
is exact at $\bar x$ if $\mathcal{Q}(\bar x) = \mathcal{C}(\bar x)$.
Otherwise, the cut is said to be inexact at $\bar x$.

In this section,  our basic goal is, given ${\bar x} \in X$ and
$\varepsilon$-optimal primal and dual solutions of \eqref{vfunctionq} written for $x=\bar x$, to derive
an inexact cut $\mathcal{C}(x)$ for $\mathcal{Q}$ at $\bar x$, i.e., an affine lower bounding function
for $\mathcal{Q}$ such that the distance
$\mathcal{Q}( \bar x ) - \mathcal{C}( \bar x)$ between the 
values of $\mathcal{Q}$ and of the cut at $\bar x$ is bounded from above by a known function of the problem
parameters. Of course, when $\varepsilon=0$, we will check that 
$\mathcal{Q}( \bar x )= \mathcal{C}( \bar x)$.

We first provide in Proposition \ref{dervaluefunction} below 
a characterization of the subdifferential of value function $\mathcal{Q}$ at $\bar x \in X$
when optimal primal and dual solutions for \eqref{vfunctionq}  written for $x=\bar x$ are available (computation of exact cuts).

Consider for problem \eqref{vfunctionq} the Lagrangian dual problem 
\begin{equation}\label{dualpb}
\displaystyle \sup_{(\lambda, \mu) \in \mathbb{R}^q \small{\times} \mathbb{R}_{+}^{p} }\; \theta_{x}(\lambda, \mu)
\end{equation}
for the dual function
\begin{equation}\label{defdualfunction}
\theta_{x}(\lambda, \mu)=\displaystyle \inf_{y \in Y} \;L_x(y,\lambda,\mu)
\end{equation}
where 
$$
L_x(y,\lambda,\mu)= f(y, x) + \lambda^T (Ay+Bx-b) + \mu^T g(y,x).
$$
We denote by $\Lambda(x)$ the set of optimal solutions of the  dual problem \eqref{dualpb}
and we use the notation
$$
\mbox{Sol}(x):=\{y \in S(x) : f(y,x)=\mathcal{Q}(x)\}
$$
to indicate the solution set to \eqref{vfunctionq}.

\begin{lemma}[Lemma 2.1 in \cite{guiguessiopt2016}]\label{dervaluefunction0}  
Consider the value function $\mathcal{Q}$ given by \eqref{vfunctionq} and take $\bar x \in X$
such that $S(\bar x)\neq \emptyset$. 
Let Assumptions (H1) and (H2) hold and
assume the Slater-type constraint qualification condition:
$$
there \;exists\; (x_* , y_* ) \in X{\small{\times}}\emph{ri}(Y) \mbox{ such that }A y_* + B x_* = b \mbox{ and } (y_* , x_* ) \in \emph{ri}(\{g \leq 0\}).
$$
Then $s \in \partial \mathcal{Q}(\bar x)$ if and only if
\begin{equation}\label{caractsubQ}
\begin{array}{l}
(0, s) \in  \partial f(\bar y, \bar x)+\Big\{[A^T; B^T ] \lambda \;:\;\lambda \in \mathbb{R}^q\Big\}\\
\hspace*{1.2cm}+ \Big\{\displaystyle \sum_{i \in I(\bar y, \bar x)}\; \mu_i \partial g_i(\bar y, \bar x)\;:\;\mu_i \geq 0 \Big\}+\mathcal{N}_{Y}(\bar y)\small{\times}\{0\},
\end{array}
\end{equation}
where $\bar y$ is any element in the solution set \mbox{Sol}($\bar x$) and with
$$I(\bar y, \bar x)=\Big\{i \in \{1,\ldots,p\} \;:\;g_i(\bar y, \bar x) =0\Big\}.$$ 
In particular, if $f$  and $g$ are differentiable, then 
\begin{equation}\label{subdifffordiff}
\partial \mathcal{Q}(\bar x)=\Big\{  \nabla_x f(\bar y, \bar x)+ B^T \lambda + \sum_{i \in I(\bar y, \bar x )}\; \mu_i \nabla_x g_i(\bar y, \bar x)\;:\; (\lambda, \mu) \in \Lambda(\bar x) \Big\}.
\end{equation}
\end{lemma}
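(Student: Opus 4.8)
The plan is to treat $\mathcal{Q}$ as a marginal (inf-projection) function and to reduce the computation of $\partial\mathcal{Q}(\bar x)$ to a subdifferential computation for the joint objective-plus-constraint function, which is then resolved by standard normal-cone calculus. Concretely, I would set
$$
F(y,x) = f(y,x) + \iota_Y(y) + \iota_E(y,x) + \iota_G(y,x),
$$
where $\iota_C$ denotes the indicator function of $C$, $E=\{(y,x):Ay+Bx=b\}$, and $G=\{(y,x):g(y,x)\leq 0\}$, so that $\mathcal{Q}(x)=\inf_{y}F(y,x)$ and, for any $\bar y\in\mbox{Sol}(\bar x)$, $\mathcal{Q}(\bar x)=F(\bar y,\bar x)$. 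Assumptions (H1)--(H2), together with the convexity of $Y$, make $F$ proper, convex, and lower semicontinuous, and the Slater-type condition guarantees $\mathcal{Q}(\bar x)$ is finite with the infimum attained at $\bar y$.

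First I would establish the inf-projection rule: for $\bar y\in\mbox{Sol}(\bar x)$ one has $s\in\partial\mathcal{Q}(\bar x)$ if and only if $(0,s)\in\partial F(\bar y,\bar x)$. Both directions follow directly from the subgradient inequality. If $(0,s)\in\partial F(\bar y,\bar x)$, then $F(y,x)\geq F(\bar y,\bar x)+\langle s,x-\bar x\rangle$ for all $(y,x)$, and taking the infimum over $y$ gives $\mathcal{Q}(x)\geq\mathcal{Q}(\bar x)+\langle s,x-\bar x\rangle$. Conversely, if $s\in\partial\mathcal{Q}(\bar x)$, then for all $(y,x)$ we have $F(y,x)\geq\mathcal{Q}(x)\geq\mathcal{Q}(\bar x)+\langle s,x-\bar x\rangle=F(\bar y,\bar x)+\langle(0,s),(y,x)-(\bar y,\bar x)\rangle$, which is exactly $(0,s)\in\partial F(\bar y,\bar x)$.

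The main work is then to expand $\partial F(\bar y,\bar x)$ by the sum rule and to identify the three normal-cone contributions. The Slater-type constraint qualification ensures that the relative interiors of the domains of the summands intersect, so the exact sum rule applies and
$$
\partial F(\bar y,\bar x)=\partial f(\bar y,\bar x)+\big(\mathcal{N}_Y(\bar y)\times\{0\}\big)+\mathcal{N}_E(\bar y,\bar x)+\mathcal{N}_G(\bar y,\bar x).
$$
Here $\mathcal{N}_Y(\bar y)\times\{0\}$ is the subdifferential of $\iota_Y$ on the joint space since $Y$ constrains $y$ only; $\mathcal{N}_E(\bar y,\bar x)=\{[A^T;B^T]\lambda:\lambda\in\mathbb{R}^q\}$ is the range of the transpose of the equality operator, because $E$ is an affine subspace; and, by the standard formula for the normal cone to a convex inequality system under Slater, $\mathcal{N}_G(\bar y,\bar x)=\{\sum_{i\in I(\bar y,\bar x)}\mu_i w_i:\mu_i\geq 0,\ w_i\in\partial g_i(\bar y,\bar x)\}$, involving only the active indices $I(\bar y,\bar x)$. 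Substituting these expressions and imposing $(0,s)\in\partial F(\bar y,\bar x)$ yields precisely \eqref{caractsubQ}.

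In the differentiable case the conditions collapse to the KKT system: the $y$-block of $(0,s)\in\partial F(\bar y,\bar x)$ forces $\nabla_y f(\bar y,\bar x)+A^T\lambda+\sum_{i\in I}\mu_i\nabla_y g_i(\bar y,\bar x)\in-\mathcal{N}_Y(\bar y)$ with $\mu\geq 0$, which together with feasibility and complementarity characterizes $(\lambda,\mu)\in\Lambda(\bar x)$ by strong duality, while the $x$-block reads off $s=\nabla_x f(\bar y,\bar x)+B^T\lambda+\sum_{i\in I}\mu_i\nabla_x g_i(\bar y,\bar x)$, giving \eqref{subdifffordiff}. I expect the delicate step to be the justification of the exact sum rule and of the active-index formula for $\mathcal{N}_G$, both of which rest on the Slater condition; one must also verify that the resulting set is independent of the chosen optimizer $\bar y\in\mbox{Sol}(\bar x)$, which holds because the admissible pairs $(\lambda,\mu)$ coincide with the dual optimal set $\Lambda(\bar x)$, itself independent of $\bar y$. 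A duality-based alternative that avoids explicit normal-cone calculus, building the subgradient inequality directly from weak duality $\mathcal{Q}(x)\geq\theta_x(\lambda,\mu)$ and complementary slackness at $(\bar y,\lambda,\mu)$, is also available and parallels the inexact argument of \cite{guigues2016isddp}.
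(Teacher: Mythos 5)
Your proposal is correct and follows essentially the same route as the paper's source for this statement: the paper does not reprove Lemma \ref{dervaluefunction0} in the text but attributes its proof in \cite{guiguessiopt2016} to exactly the normal/tangent-cone calculus you carry out (the inf-projection rule $s\in\partial\mathcal{Q}(\bar x)\Leftrightarrow(0,s)\in\partial F(\bar y,\bar x)$, the exact sum rule under the Slater-type condition, and the active-index formula for the normal cone of the inequality system). The duality-based alternative you mention at the end is precisely what the paper develops separately in Proposition \ref{dervaluefunction} for the inclusion direction in the differentiable case.
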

The proof of Lemma \ref{dervaluefunction0}  is given in \cite{guiguessiopt2016} using calculus on normal and tangeant cones.
In Proposition \ref{dervaluefunction} below, we show how to obtain an exact cut for $\mathcal{Q}$ at $\bar x \in X$
using convex duality when $f$ and $g$ are differentiable.
\begin{prop}\label{dervaluefunction} 
Consider the value function $\mathcal{Q}$ given by \eqref{vfunctionq} and take $\bar x \in X$
such that $S(\bar x)\neq \emptyset$. 
Let Assumptions (H1) and (H2) hold and assume the following constraint qualification condition:
there exists $y_0 \in \mbox{ri}(Y) \cap \mbox{ri}(\{g(\cdot,\bar x) \leq 0\})$ such that $A y_0 + B \bar x = b$.
Assume that $f$ and $g$ are differentiable on $Y \times X$.
Let $(\bar \lambda, \bar \mu)$ be an optimal  solution of dual problem \eqref{dualpb} written with $x=\bar x$ and let
\begin{equation}\label{caractsubQ}
s( \bar x ) =  \nabla_x f(\bar y, \bar x)+ B^T \bar \lambda +  \displaystyle \sum_{i \in I(\bar y, \bar x)}\; \bar \mu_i \nabla_x g_i(\bar y, \bar x),
\end{equation}
where $\bar y$ is any element in the solution set \mbox{Sol}($\bar x$) and with
$$I(\bar y, \bar x)=\Big\{i \in \{1,\ldots,p\} \;:\;g_i(\bar y, \bar x) =0\Big\}.$$
Then $s( \bar x ) \in \partial \mathcal{Q}(\bar x)$ .
\end{prop}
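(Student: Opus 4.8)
The plan is to bypass the normal/tangent cone calculus of Lemma \ref{dervaluefunction0} and argue entirely by convex duality, as announced. Fix the dual-optimal pair $(\bar\lambda,\bar\mu)$ and consider the \emph{frozen-multiplier} dual value $\phi(x):=\theta_x(\bar\lambda,\bar\mu)=\inf_{y\in Y}L_x(y,\bar\lambda,\bar\mu)$. Because $f$ and each $g_i$ are convex in $(y,x)$ by (H1)--(H2) and $\bar\mu\geq 0$, the map $(y,x)\mapsto L_x(y,\bar\lambda,\bar\mu)$ is jointly convex, so its infimal projection $\phi$ is convex. The proof then rests on two observations: weak duality gives $\mathcal{Q}(x)\geq\phi(x)$ for every $x$ (trivially when $S(x)=\emptyset$, and otherwise because the equality term vanishes and $\bar\mu^T g\leq 0$ on $S(x)$), while the stated relative-interior constraint qualification yields strong duality at $\bar x$, so $\mathcal{Q}(\bar x)=\phi(\bar x)$. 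Hence it suffices to show $s(\bar x)\in\partial\phi(\bar x)$: the affine minorant of $\phi$ it produces is then a minorant of $\mathcal{Q}$ that is tight at $\bar x$, which is exactly $s(\bar x)\in\partial\mathcal{Q}(\bar x)$.

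To set up the subgradient computation for $\phi$, I would first record what strong duality buys at $\bar x$. Chaining $f(\bar y,\bar x)=\mathcal{Q}(\bar x)=\phi(\bar x)=\inf_{y\in Y}L_{\bar x}(y,\bar\lambda,\bar\mu)\leq L_{\bar x}(\bar y,\bar\lambda,\bar\mu)=f(\bar y,\bar x)+\bar\mu^T g(\bar y,\bar x)$ (the equality term drops since $\bar y\in S(\bar x)$) forces every inequality to be an equality. This delivers two facts: complementary slackness $\bar\mu^T g(\bar y,\bar x)=0$, hence $\bar\mu_i=0$ for $i\notin I(\bar y,\bar x)$, which is precisely what collapses the full-index sum to the active-index sum and identifies $\nabla_x L_{\bar x}(\bar y,\bar\lambda,\bar\mu)=s(\bar x)$; and $\bar y\in\argmin_{y\in Y}L_{\bar x}(y,\bar\lambda,\bar\mu)$, whose first-order condition on the convex set $Y$ reads $\langle\nabla_y L_{\bar x}(\bar y,\bar\lambda,\bar\mu),\,y-\bar y\rangle\geq 0$ for all $y\in Y$.

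The core step is then a single gradient inequality. Using joint convexity and differentiability of $L_x(\cdot,\bar\lambda,\bar\mu)$ in $(y,x)$, for every $y\in Y$ and every $x$,
\[
L_x(y,\bar\lambda,\bar\mu)\;\geq\;L_{\bar x}(\bar y,\bar\lambda,\bar\mu)+\langle\nabla_y L_{\bar x}(\bar y,\bar\lambda,\bar\mu),\,y-\bar y\rangle+\langle s(\bar x),\,x-\bar x\rangle.
\]
Discarding the nonnegative $y$-term via the variational inequality above and then taking the infimum over $y\in Y$ yields $\phi(x)\geq\phi(\bar x)+\langle s(\bar x),\,x-\bar x\rangle$, i.e. $s(\bar x)\in\partial\phi(\bar x)$. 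Combining this with $\mathcal{Q}\geq\phi$ and $\mathcal{Q}(\bar x)=\phi(\bar x)$ gives $\mathcal{Q}(x)\geq\mathcal{Q}(\bar x)+\langle s(\bar x),\,x-\bar x\rangle$ for all $x\in X$, closing the argument.

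I expect the main obstacle to be bookkeeping rather than conceptual: justifying strong duality and the saddle-point/complementary-slackness consequences from the relative-interior constraint qualification, and being careful that the identity $\nabla_x L_{\bar x}(\bar y,\bar\lambda,\bar\mu)=s(\bar x)$ relies on complementary slackness to discard the inactive multipliers. The inexact analogue of \cite[Proposition 2.7]{guigues2016isddp} carries a residual term coming from $\varepsilon$-optimal solutions; here, with exact primal and dual solutions, that term is zero, which is exactly what makes the cut tight, $\mathcal{Q}(\bar x)=\mathcal{C}(\bar x)$.
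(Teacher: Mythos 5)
Your proposal is correct and follows essentially the same duality route as the paper's proof: weak duality $\mathcal{Q}(x)\geq\theta_x(\bar\lambda,\bar\mu)$, strong duality at $\bar x$ from the constraint qualification, the joint-convexity gradient inequality for $(y,x)\mapsto L_x(y,\bar\lambda,\bar\mu)$, the variational inequality for $\bar y$ in the inner minimization, and complementary slackness to identify the slope with $s(\bar x)$. The only cosmetic difference is that you package the argument through the explicitly convex frozen-multiplier function $\phi(x)=\theta_x(\bar\lambda,\bar\mu)$ and derive complementary slackness from the chain of equalities rather than asserting it, which changes nothing of substance.
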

\begin{proof}
The constraint qualification condition implies that there is no duality gap and therefore 
\begin{equation}\label{firstqxarf}
f(\bar y, \bar x) =\mathcal{Q}(\bar x) = \theta_{\bar x}( \bar \lambda, \bar \mu ).
\end{equation}
Moreover, $\bar y$ is an optimal solution of $\inf \{ L_{\bar x}(y, \bar \lambda, \bar \mu) : y \in Y\}$ which gives
$$
\langle \nabla_y L_{\bar x}(\bar y, \bar \lambda, \bar \mu), y - \bar y \rangle \geq 0 \; \forall y \in Y,
$$
and therefore
\begin{equation}\label{optcondybar}
\min_{y \in Y} \langle \nabla_{y} L_{\bar x}(\bar y, \bar \lambda , \bar \mu ) , y- \bar y \rangle = 0.
\end{equation}
Using the convexity of the function which associates to $(x,y)$ the value 
$L_{x}(y,\bar \lambda , \bar \mu)$ we obtain for every $x\in X$ and $y \in Y$ that
\begin{equation}\label{secondexactcut}
L_x(y,{\bar \lambda}, \bar \mu) \geq 
L_{\bar x}(\bar y,\bar \lambda, \bar \mu)
+ \langle \nabla_{x} L_{\bar x}( \bar y, \bar \lambda , \bar \mu ) , x- \bar x \rangle 
+ \langle \nabla_{y} L_{\bar x}(\bar y, \bar \lambda , \bar \mu ) , y- \bar y \rangle.
\end{equation}
By definition of $\theta_x$, for any $x \in X$ we get
$$
\mathcal{Q}( x ) \geq \theta_x ( \bar \lambda  , \bar \mu ) 
$$
which combined with \eqref{secondexactcut} gives 
$$
\begin{array}{lcl}
\mathcal{Q}( x ) & \geq & L_{\bar x}(\bar y,\bar \lambda, \bar \mu)
+ \langle \nabla_{x} L_{\bar x}( \bar y, \bar \lambda , \bar \mu ) , x- \bar x \rangle 
+ \min_{y \in Y} \langle \nabla_{y} L_{\bar x}(\bar y, \bar \lambda , \bar \mu ) , y- \bar y \rangle \\
& \stackrel{\eqref{optcondybar}}{=} & L_{\bar x}(\bar y,\bar \lambda, \bar \mu) + \langle  \nabla_x f(\bar y, \bar x)+ B^T \bar \lambda +  \displaystyle \sum_{i=1}^p\; \bar \mu_i \nabla_x g_i(\bar y, \bar x) , x - \bar x \rangle,\\
& = & \mathcal{Q}( \bar x ) + \langle s(\bar x ) , x - \bar x \rangle
\end{array}
$$
where the last equality follows from \eqref{firstqxarf}, $A \bar y + B \bar x = b$ (feasibility of $\bar y$), $\langle \bar \mu , g(\bar y , \bar x ) \rangle = 0$, and 
${\bar \mu}_i = 0$ if $i \notin I(\bar y , \bar x)$(complementary slackness for $\bar y$).
\hfill
\end{proof}

\subsection{Inexact cuts with fixed feasible set}

As a special case of \eqref{vfunctionq}, we first consider value
functions where the argument only  appears in the objective of optimization problem \eqref{vfunctionq}:
\begin{equation} \label{vfunction1}
\mathcal{Q}(x)=\left\{
\begin{array}{l}
\inf_{y \in \mathbb{R}^n} \;f(y, x)\\
y \in Y.
\end{array}
\right.
\end{equation}

We fix $\bar x \in X$ and
denote by $\bar y \in Y$ an optimal solution of \eqref{vfunction1} written for $x =\bar x$:
\begin{equation}\label{optfixedset}
\mathcal{Q}( \bar x ) = f(\bar y , \bar x).
\end{equation}

If $f$ is differentiable, using Proposition \ref{dervaluefunction}, we have that $\nabla_x f(\bar y, \bar x) \in \partial \mathcal{Q}( \bar x )$ 
and
$$
\mathcal{C}(x):= \mathcal{Q}(\bar x) + \langle  \nabla_x f(\bar y, \bar x)    ,  x - \bar x \rangle 
$$
is an exact cut for $\mathcal{Q}$ at $\bar x$. If
instead of an optimal solution $\bar y$ of \eqref{vfunction1}, we only have 
at hand an approximate $\varepsilon$-optimal solution $\hat y( \varepsilon )$, 
Proposition \ref{fixedprop1} below
gives an inexact cut for $\mathcal{Q}$ at $\bar x$:
\begin{prop}[Proposition 2.2 in \cite{guigues2016isddp}] \label{fixedprop1} Let $\bar x \in X$ and
let $\hat y(\varepsilon) \in Y$ be an $\epsilon$-optimal solution for problem \eqref{vfunction1}
written for $x= \bar x$ with optimal value $\mathcal{Q}( \bar x )$, i.e., $\mathcal{Q}( \bar x ) \geq f( \hat y(\varepsilon) , \bar x ) - \varepsilon$.
Assume that $f$ is convex and differentiable on $Y \small{\times} X$.
Then setting  $\eta( \varepsilon , \bar x )=\ell_1( \hat y(\varepsilon) , \bar x )$
where $\ell_1 :Y \small{\times} X \rightarrow \mathbb{R}_{+}$ is the function given  by
\begin{equation}\label{defrxy}
\ell_1 (\hat y , \bar x  ) = - \min_{y \in Y} \langle \nabla_y f( \hat y ,  \bar x  ) , y - \hat y  \rangle = \max_{y \in Y} \langle \nabla_y f(    \hat y , \bar x) , \hat y - y  \rangle,
\end{equation}
the affine function
\begin{equation}\label{cutfixed1}
\mathcal{C}(x):= f ( \hat y(\varepsilon) , \bar x ) - \eta(\varepsilon,\bar x)  + \langle \nabla_x f(  \hat y(\varepsilon) , \bar x ) , x - \bar x  \rangle
\end{equation}
is a cut for $\mathcal{Q}$ at $\bar x$, i.e., for every $x \in X$ we have
$\mathcal{Q}(x) \geq \mathcal{C}(x)$ and the quantity $\eta(\varepsilon,\bar x)$
is an upper bound for
the 
distance 
$\mathcal{Q}( \bar x ) - \mathcal{C}( \bar x)$ between the 
values of $\mathcal{Q}$ and of the cut at $\bar x$. 
\end{prop}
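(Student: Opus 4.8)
The plan is to exploit the joint convexity and differentiability of $f$ on $Y \small{\times} X$ to manufacture an affine lower bound for $\mathcal{Q}$ directly from the approximate solution $\hat y(\varepsilon)$, and then to read off the gap at $\bar x$. Since the present setting is the constraint-free analogue of Proposition \ref{dervaluefunction} --- the only restriction on $y$ being the abstract one $y \in Y$, which is never dualized --- no Lagrangian is required, and the subgradient inequality for the differentiable convex function $f$ itself plays the role of the Lagrangian lower bound. Write $\hat y = \hat y(\varepsilon)$ throughout.

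First I would fix an arbitrary $x \in X$ and an arbitrary $y \in Y$ and apply the gradient inequality for $f$ at the point $(\hat y, \bar x)$:
$$
f(y,x) \geq f(\hat y, \bar x) + \langle \nabla_y f(\hat y, \bar x), y - \hat y \rangle + \langle \nabla_x f(\hat y, \bar x), x - \bar x \rangle.
$$
The first and last terms on the right do not depend on $y$, so infimizing over $y \in Y$ (the left side becoming exactly $\mathcal{Q}(x)$, and the minimum on the right being attained by compactness of $Y$) yields
$$
\mathcal{Q}(x) \geq f(\hat y, \bar x) + \min_{y \in Y} \langle \nabla_y f(\hat y, \bar x), y - \hat y \rangle + \langle \nabla_x f(\hat y, \bar x), x - \bar x \rangle.
$$
By the definition \eqref{defrxy} of $\ell_1$, the middle term equals $-\eta(\varepsilon, \bar x)$, so the right-hand side is precisely $\mathcal{C}(x)$ as in \eqref{cutfixed1}; this establishes the cut inequality $\mathcal{Q}(x) \geq \mathcal{C}(x)$ for every $x \in X$.

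It then remains to bound the gap at $\bar x$. Evaluating the cut at $x = \bar x$ annihilates the gradient term and leaves $\mathcal{C}(\bar x) = f(\hat y, \bar x) - \eta(\varepsilon, \bar x)$, so
$$
\mathcal{Q}(\bar x) - \mathcal{C}(\bar x) = \big(\mathcal{Q}(\bar x) - f(\hat y, \bar x)\big) + \eta(\varepsilon, \bar x).
$$
Since $\hat y \in Y$ is feasible for the problem defining $\mathcal{Q}(\bar x)$, the bracket is nonpositive, whence $\mathcal{Q}(\bar x) - \mathcal{C}(\bar x) \leq \eta(\varepsilon, \bar x)$, as claimed; the cut inequality already forces this gap to be nonnegative, and taking $y = \hat y$ in \eqref{defrxy} shows $\eta(\varepsilon, \bar x) \geq 0$, consistent with $\ell_1$ mapping into $\mathbb{R}_+$.

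I do not expect a genuine obstacle: the whole argument rests on one application of joint convexity followed by an interchange of the infimum, and that interchange --- confirming that the $y$-independent terms pass through $\min_{y \in Y}$ untouched while the left side collapses to $\mathcal{Q}(x)$ --- is the only step meriting care. It is worth noting that the upper bound $\eta(\varepsilon, \bar x)$ on the gap in fact follows from feasibility of $\hat y$ alone; the $\varepsilon$-optimality hypothesis serves to certify that $\hat y$ is a meaningful approximate solution and, in the exact case $\varepsilon = 0$, forces the optimality condition $\min_{y \in Y}\langle \nabla_y f(\hat y, \bar x), y - \hat y\rangle = 0$, i.e.\ $\eta = 0$, so that the cut becomes exact at $\bar x$.
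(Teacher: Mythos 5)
Your proof is correct: the joint gradient inequality for $f$ at $(\hat y,\bar x)$, followed by infimizing over $y\in Y$ and using feasibility of $\hat y$ to bound the gap at $\bar x$, is exactly the intended argument. The paper itself only cites \cite{guigues2016isddp} for this statement, but your route is precisely the specialization to the constraint-free case of the duality argument the paper does spell out for Propositions \ref{dervaluefunction} and \ref{varprop1} (where the Lagrangian plays the role of $f$ and the term $\min_{y\in Y}\langle \nabla_y L_{\bar x}(\hat y,\hat\lambda,\hat\mu),y-\hat y\rangle$ replaces your $-\eta$), and your closing observations --- that only feasibility of $\hat y$ is needed for the gap bound and that $\varepsilon=0$ forces $\eta=0$ via first-order optimality --- match the paper's remark following the proposition.
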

\begin{rem} If $\varepsilon=0$ then $\hat y( \varepsilon )$ is an optimal solution of problem \eqref{vfunction1}
written for $x= \bar x$, $\eta(\varepsilon,\bar x) = \ell_1(\hat y( \varepsilon )  ,  \bar x  )=0$ and 
the cut given by Proposition \ref{fixedprop1} is exact. Otherwise it is inexact. 
\end{rem} 

In Proposition \ref{fixedprop3} below, we derive inexact cuts with an additional assumption of strong convexity on $f$: 
\begin{itemize}
\item[(H3)] $f$ is convex and differentiable on $Y \small{\times} X$  and
for every $x \in X$ there  exists $\alpha( x )>0$ such that  the function $f(\cdot , x)$ is strongly convex on $Y$ with constant of strong convexity $\alpha( x )>0$ for $\|\cdot\|_2$:
$$
f(y_2 , x) \geq f(y_1 , x) + (y_2 - y_1)^T \nabla_y f(y_1 , x)   + \frac{\alpha( x )}{2} \|y_2- y_1\|_2^2,\;\forall x \in X,\,\forall \, y_1, y_2 \in Y.
$$
\end{itemize}
We will also need the following assumption, used to control the error on the gradients of $f$:
\begin{itemize}
\item[(H4)] For every $y \in Y$ the function $f(y, \cdot)$ is differentiable on $X$ and for every $x \in X$ there exists $0 \leq M_1(x)<+\infty$ such that
for every
$y_1, y_2 \in Y$, we have
$$
\|\nabla_x f(y_2, x) -  \nabla_x f(y_1, x)  \|_2  \leq  M_1(x) \|y_2   - y_1\|_2. 
$$
\end{itemize}
\begin{prop}\label{fixedprop3} Let $\bar x \in X$ and
let $\hat y( \varepsilon ) \in Y$ be an $\epsilon$-optimal solution for problem \eqref{vfunction1}
written for $x= \bar x$ with optimal value $\mathcal{Q}( \bar x )$, i.e., $\mathcal{Q}( \bar x ) \geq f( \hat y ( \varepsilon ) , \bar x ) - \varepsilon$.
Let Assumptions (H3) and (H4) hold. Then setting
\begin{equation}\label{defeta}
\eta(\varepsilon, \bar x) = \varepsilon   +   M_1( \bar x ) \emph{Diam}(X) \sqrt{ \frac{2 \varepsilon}{\alpha(\bar x)}  },
\end{equation}
the affine function
\begin{equation}\label{definexactcut1}
\mathcal{C}(x):= f (\hat  y ( \varepsilon ) ,  \bar x  ) - \eta(\varepsilon, \bar x)  + \langle  \nabla_x f( \hat  y ( \varepsilon ), \bar x  ) , x - \bar x \rangle
\end{equation}
is a cut for $\mathcal{Q}$ at $\bar x$, i.e., for every $x \in X$ we have
$\mathcal{Q}(x) \geq \mathcal{C}(x)$ and the distance 
$\mathcal{Q}( \bar x ) - \mathcal{C}( \bar x)$ between the 
values of $\mathcal{Q}$ and of the cut at $\bar x$ is at most
$ \eta(\varepsilon , \bar x) $, or, equivalently,
$\nabla_x f( \hat  y ,   \bar x  ) \in \partial_{ \eta(\varepsilon, \bar x) } \mathcal{Q}( \bar x )$.
\end{prop}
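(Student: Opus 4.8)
The plan is to compare the inexact cut $\mathcal{C}$ with the \emph{exact} cut built from a true minimizer, and to control the gap between the two by a quadratic-growth estimate coming from strong convexity. Since $Y$ is compact and $f(\cdot,\bar x)$ is continuous and strongly convex by (H3), there is a unique exact minimizer $\bar y \in Y$ with $\mathcal{Q}(\bar x)=f(\bar y,\bar x)$. As noted after \eqref{optfixedset} (a special case of Proposition \ref{dervaluefunction}), $\nabla_x f(\bar y,\bar x)\in\partial\mathcal{Q}(\bar x)$, so the affine function
$$
\mathcal{C}_{\mathrm{e}}(x):= f(\bar y,\bar x) + \langle \nabla_x f(\bar y,\bar x), x-\bar x\rangle
$$
is an exact cut, i.e. $\mathcal{Q}(x)\geq \mathcal{C}_{\mathrm{e}}(x)$ for all $x\in X$. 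It therefore suffices to prove that $\mathcal{C}_{\mathrm{e}}(x)\geq \mathcal{C}(x)$ on $X$; chaining the two inequalities then yields $\mathcal{Q}(x)\geq\mathcal{C}(x)$.

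Writing $\hat y=\hat y(\varepsilon)$, the difference of the two affine functions is
$$
\mathcal{C}_{\mathrm{e}}(x)-\mathcal{C}(x) = \big[ f(\bar y,\bar x)-f(\hat y,\bar x)+\eta(\varepsilon,\bar x)\big] + \langle \nabla_x f(\bar y,\bar x)-\nabla_x f(\hat y,\bar x), x-\bar x\rangle,
$$
and I would bound the two pieces separately. For the first, $\varepsilon$-optimality gives $f(\hat y,\bar x)-f(\bar y,\bar x)\leq\varepsilon$, so the constant term is at least $\eta(\varepsilon,\bar x)-\varepsilon$. For the second, Cauchy--Schwarz together with (H4) and $\|x-\bar x\|_2\leq\mbox{Diam}(X)$ gives
$$
\langle \nabla_x f(\bar y,\bar x)-\nabla_x f(\hat y,\bar x), x-\bar x\rangle \geq - M_1(\bar x)\,\mbox{Diam}(X)\,\|\hat y-\bar y\|_2.
$$

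The decisive step is to bound $\|\hat y-\bar y\|_2$ in terms of $\varepsilon$. Applying strong convexity (H3) of $f(\cdot,\bar x)$ at $\bar y$ and using the first-order optimality condition $\langle \nabla_y f(\bar y,\bar x),\hat y-\bar y\rangle\geq 0$ (valid since $\bar y$ minimizes $f(\cdot,\bar x)$ over the convex set $Y$ and $\hat y\in Y$), one gets
$$
\frac{\alpha(\bar x)}{2}\|\hat y-\bar y\|_2^2 \leq f(\hat y,\bar x)-f(\bar y,\bar x) \leq \varepsilon,
$$
whence $\|\hat y-\bar y\|_2\leq\sqrt{2\varepsilon/\alpha(\bar x)}$. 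Substituting this quadratic-growth bound into the gradient estimate shows $\mathcal{C}_{\mathrm{e}}(x)-\mathcal{C}(x)\geq \eta(\varepsilon,\bar x)-\varepsilon-M_1(\bar x)\,\mbox{Diam}(X)\,\sqrt{2\varepsilon/\alpha(\bar x)}=0$ by the definition \eqref{defeta} of $\eta$, proving $\mathcal{Q}(x)\geq\mathcal{C}(x)$ on $X$. Finally, evaluating at $\bar x$ gives $\mathcal{Q}(\bar x)-\mathcal{C}(\bar x)=\mathcal{Q}(\bar x)-f(\hat y,\bar x)+\eta(\varepsilon,\bar x)\leq\eta(\varepsilon,\bar x)$ since $\mathcal{Q}(\bar x)=f(\bar y,\bar x)\leq f(\hat y,\bar x)$, which is the claimed distance bound and the $\eta$-subgradient property. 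I expect the quadratic-growth estimate for $\|\hat y-\bar y\|_2$ to be the crux: it is the only place where strong convexity is genuinely used, and it is exactly what converts an $\varepsilon$ suboptimality in function value into the $\sqrt{\varepsilon}$ control on the primal distance that the gradient-error term demands.
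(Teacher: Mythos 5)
Your proof is correct and follows essentially the same route as the paper's: the quadratic-growth bound $\|\hat y-\bar y\|_2\leq\sqrt{2\varepsilon/\alpha(\bar x)}$ from strong convexity plus first-order optimality, the exact subgradient $\nabla_x f(\bar y,\bar x)\in\partial\mathcal{Q}(\bar x)$, and the Cauchy--Schwarz/(H4)/diameter control of the gradient error are exactly the paper's ingredients. Presenting the argument as $\mathcal{Q}\geq\mathcal{C}_{\mathrm{e}}\geq\mathcal{C}$ rather than chaining the inequalities directly on $\mathcal{Q}(x)$ is only a cosmetic repackaging.
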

\begin{proof} For short, we use the notation $\hat y$ instead of $ \hat y ( \varepsilon )$.
Using the fact that $\hat y \in Y$, the first order optimality conditions for $\bar y$ imply
$(\hat y - \bar y)^T \nabla_y f ( \bar y , \bar x ) \geq 0$, which combined with Assumption (H3), gives
$$
\begin{array}{lll}
f( \hat  y , \bar x ) & \geq &  f( \bar y , \bar x ) + (\hat y - \bar y )^T \nabla_y f ( \bar y , \bar x ) + \frac{\alpha(\bar x)}{2} \|\hat y - \bar y\|_2^2\\
& \geq & \mathcal{Q}( \bar x ) + \frac{\alpha(\bar x)}{2} \|\hat y - \bar y\|_2^2,
\end{array}
$$
yielding
\begin{equation}\label{firstrelationstab}
\|\bar y - \hat y\|_2 \leq \sqrt{   \frac{2}{\alpha(\bar x)}\Big( f(\hat y , \bar x) - \mathcal{Q}( \bar x )  \Big) } \leq  \sqrt{ \frac{2 \varepsilon}{\alpha(\bar x)}}.  
\end{equation}
Now recalling that $\nabla_x f(\bar y ,  \bar x  ) \in \partial \mathcal{Q}(  \bar  x)$, we have
for every $x \in X$,
\begin{equation}\label{firstineqprop23}
\begin{array}{lcl}
\mathcal{Q}(  x)  &  \geq &   \mathcal{Q}(  \bar  x) + ( x - \bar x )^T  \nabla_x f(\bar y ,  \bar x  )  \\
 &  \geq & f( \hat y , \bar x )  - \varepsilon +    ( x - \bar x )^T \nabla_x f(\bar y ,  \bar x  )  \\
 & = & f( \hat y , \bar x )  - \varepsilon + ( x - \bar x )^T  \nabla_x f(\hat  y ,  \bar x )    +  ( x - \bar x )^T \Big(   \nabla_x f(\bar  y ,  \bar x  ) - \nabla_x f(\hat  y ,  \bar x  ) \Big)  \\
 & \geq  & f( \hat y , \bar x )  - \varepsilon + (  x - \bar x )^T  \nabla_x f(\hat  y ,  \bar x  ) -M_1(\bar x) \|\hat y  - \bar y\|_2  \|x  - \bar x\|_2   \\
 & \stackrel{\eqref{firstrelationstab}}{\geq} & f (\hat  y ,  \bar x  ) - \varepsilon   -   M_1( \bar x ) \mbox{Diam}(X) \sqrt{ \frac{2 \varepsilon}{\alpha(\bar x)}  } +   (  x - \bar x  )^T  \nabla_x f(\hat  y ,  \bar x  ),
\end{array}
\end{equation}
where for the third inequality we have used Cauchy-Schwartz inequality and Assumption (H4).
 Finally, observe that $\mathcal{C}( \bar x ) = f (\hat  y ,  \bar x  ) - \eta(\varepsilon, \bar x)  \geq \mathcal{Q}( \bar x ) - \eta(\varepsilon, \bar x) $.\hfill
\end{proof}
\begin{rem} As expected, if $\varepsilon =0$ then $\eta(\varepsilon,\bar x)=0$ and the cut given by Proposition \ref{fixedprop3} is exact. Otherwise it is inexact. 
The error term $\eta(\varepsilon,\bar x)$ is the sum of the upper bound $\varepsilon$ on the error on the optimal value and of the  error term $M_1( \bar x) \emph{Diam}(X) \sqrt{ \frac{2 \varepsilon}{\alpha(\bar x)}  }$
which accounts for the error on the subgradients of $\mathcal{Q}$.
\end{rem}

\subsection{Inexact cuts with variable feasible set}

For $x \in X$, recall that for problem \eqref{vfunctionq} the Lagrangian
function is
$$
L_{x}(y, \lambda, \mu)=f(y,x) + \lambda^T (Bx+Ay-b) + \mu^T g(y,x),
$$ 
and
the 
dual function is given by
\begin{equation}\label{dualfunction}
\theta_{x}(\lambda, \mu)=\displaystyle \inf_{y \in Y} \;L_{x}(y, \lambda, \mu).
\end{equation}
Define
$\ell_2 :Y \small{\times} X  \small{\times}  \mathbb{R}^q      \small{\times} \mathbb{R}_{+}^p \rightarrow \mathbb{R}_{+}$ by
\begin{equation}\label{defrxy}
\ell_2 (\hat y , \bar x , \hat \lambda , \hat \mu ) = - \min_{y \in Y} \langle \nabla_y L_{\bar x} ( \hat y , \hat \lambda , \hat \mu ) , y - \hat y  \rangle = \max_{y \in Y} 
\langle  \nabla_y L_{\bar x} ( \hat y , \hat \lambda , \hat \mu ) , \hat y - y  \rangle.
\end{equation}
We make the following assumption which ensures no duality gap for \eqref{vfunctionq} for any $x \in X$:
\begin{itemize}
\item[(H5)]  if $Y$ is polyhedral then for every $x \in X$ there exists $y_x \in Y$ such that
$Bx+Ay_x=b$ and $g(y_x , x)< 0$ and if $Y$ is not polyhedral then 
for every $x \in X$ there exists $y_x \in \mbox{ri}(Y)$ such that
$Bx+Ay_x=b$ and $g(y_x , x)< 0$.
\end{itemize}
The following proposition, proved in \cite{guigues2016isddp}, provides an inexact cut for $\mathcal{Q}$ given by \eqref{vfunctionq}:
\begin{prop} \label{varprop1} [Proposition 2.7 in \cite{guigues2016isddp}]  Let $\bar x \in X$,
let $\hat y(\epsilon)$ be an $\epsilon$-optimal feasible primal solution for problem \eqref{vfunctionq}
written for $x= \bar x$ 
and let $(\hat \lambda(\epsilon), \hat \mu(\epsilon))$ be an $\epsilon$-optimal feasible solution of the
corresponding dual problem, i.e., of problem
\eqref{dualpb} written for $x=\bar x$.
Let Assumptions (H1), (H2), and (H5) hold. If additionally $f$ and $g$ are differentiable on $Y \small{\times} X$ then
setting $\eta(\varepsilon,\bar x)=\ell_2 (\hat y(\epsilon) , \bar x  , \hat \lambda(\epsilon) , \hat \mu(\epsilon) )$, the affine function
\begin{equation}\label{cutvarprop1}
\mathcal{C}(x):= L_{\bar x} ( \hat y(\epsilon), {\hat \lambda}(\epsilon), \hat \mu(\epsilon) )- \eta(\varepsilon,\bar x)  + 
\langle \nabla_x L_{\bar x} ( \hat y(\epsilon), {\hat \lambda}(\epsilon), \hat \mu(\epsilon) ) , x - \bar x \rangle
\end{equation}
with 
$$
\nabla_x L_{\bar x} ( \hat y(\epsilon), {\hat \lambda}(\epsilon), \hat \mu(\epsilon) )
=\nabla_x f(\hat y(\epsilon), \bar x)+ B^T \hat \lambda(\epsilon) +  \displaystyle \sum_{i=1}^p\; \hat \mu_i(\epsilon) \nabla_x g_i(\hat y(\epsilon), \bar x),
$$
is a cut for $\mathcal{Q}$ at $\bar x$ and the distance 
$\mathcal{Q}( \bar x ) - \mathcal{C}( \bar x)$ between the 
values of $\mathcal{Q}$ and of the cut at $\bar x$ is at most
$\varepsilon + \ell_2 (\hat y(\epsilon),  \bar x  , \hat \lambda(\epsilon) , \hat \mu(\epsilon) )$.
\end{prop}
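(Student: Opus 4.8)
The plan is to recast the duality-based argument of Proposition~\ref{dervaluefunction} in the inexact setting, the only genuinely new ingredient being a correction term that compensates for $\hat y(\epsilon)$ failing to minimize the Lagrangian exactly over $Y$. Throughout I would write $\hat y, \hat\lambda, \hat\mu$ for $\hat y(\epsilon), \hat\lambda(\epsilon), \hat\mu(\epsilon)$, and note that $\hat y \in Y$ and $\hat\mu \geq 0$ since these are feasible primal/dual iterates. The proof splits into establishing global validity of the cut (the inequality $\mathcal{Q}(x) \geq \mathcal{C}(x)$ for all $x \in X$) and then bounding the vertical gap at $\bar x$.

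For validity, I would first invoke weak duality: for every $x \in X$, any $y$ feasible for \eqref{vfunctionq} satisfies $Ay + Bx = b$ and $g(y,x) \leq 0$, whence $L_x(y,\hat\lambda,\hat\mu) \leq f(y,x)$; taking infima gives $\mathcal{Q}(x) \geq \theta_x(\hat\lambda,\hat\mu)$. Next, under (H1), (H2), and $\hat\mu \geq 0$ the map $(x,y) \mapsto L_x(y,\hat\lambda,\hat\mu)$ is jointly convex and (by differentiability of $f$ and $g$) differentiable, so its gradient inequality at $(\bar x, \hat y)$ reads, for all $x \in X$, $y \in Y$,
$$
L_x(y,\hat\lambda,\hat\mu) \geq L_{\bar x}(\hat y,\hat\lambda,\hat\mu) + \langle \nabla_x L_{\bar x}(\hat y,\hat\lambda,\hat\mu), x-\bar x\rangle + \langle \nabla_y L_{\bar x}(\hat y,\hat\lambda,\hat\mu), y-\hat y\rangle.
$$
Taking $\inf_{y\in Y}$ of both sides, the first two right-hand terms are independent of $y$, while the last contributes $\min_{y\in Y}\langle \nabla_y L_{\bar x}(\hat y,\hat\lambda,\hat\mu), y-\hat y\rangle = -\ell_2(\hat y,\bar x,\hat\lambda,\hat\mu) = -\eta(\varepsilon,\bar x)$, which exactly reproduces $\mathcal{C}(x)$ as defined in \eqref{cutvarprop1}. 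Hence $\theta_x(\hat\lambda,\hat\mu) \geq \mathcal{C}(x)$, and combined with weak duality this yields $\mathcal{Q}(x) \geq \mathcal{C}(x)$ on all of $X$.

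For the distance at $\bar x$, I would evaluate $\mathcal{C}(\bar x) = L_{\bar x}(\hat y,\hat\lambda,\hat\mu) - \eta(\varepsilon,\bar x)$, so that it suffices to prove $\mathcal{Q}(\bar x) - L_{\bar x}(\hat y,\hat\lambda,\hat\mu) \leq \varepsilon$. Assumption (H5) is a Slater-type condition forcing zero duality gap for \eqref{vfunctionq} at $x=\bar x$, so the optimal dual value equals $\mathcal{Q}(\bar x)$; since $(\hat\lambda,\hat\mu)$ is $\varepsilon$-dual-optimal and $L_{\bar x}(\hat y,\hat\lambda,\hat\mu) \geq \inf_{y\in Y} L_{\bar x}(y,\hat\lambda,\hat\mu) = \theta_{\bar x}(\hat\lambda,\hat\mu)$, I obtain $L_{\bar x}(\hat y,\hat\lambda,\hat\mu) \geq \theta_{\bar x}(\hat\lambda,\hat\mu) \geq \mathcal{Q}(\bar x) - \varepsilon$. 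Therefore $\mathcal{Q}(\bar x) - \mathcal{C}(\bar x) \leq \varepsilon + \eta(\varepsilon,\bar x) = \varepsilon + \ell_2(\hat y,\bar x,\hat\lambda,\hat\mu)$, as claimed.

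The step I expect to be the main obstacle is the bookkeeping of the correction term $\ell_2$. In the exact case (Proposition~\ref{dervaluefunction}) the first-order optimality of $\bar y$ for $\min_{y\in Y} L_{\bar x}(\cdot,\bar\lambda,\bar\mu)$ forces $\min_{y\in Y}\langle \nabla_y L_{\bar x}(\bar y,\bar\lambda,\bar\mu), y-\bar y\rangle = 0$, so no correction arises; with only an $\varepsilon$-optimal $\hat y$ this minimum is a nonpositive quantity $-\ell_2$, and the hard part is seeing that subtracting precisely $\ell_2$ both preserves the inequality $\theta_x(\hat\lambda,\hat\mu) \geq \mathcal{C}(x)$ while keeping $\mathcal{C}$ affine in $x$, and simultaneously controls the vertical gap at $\bar x$ together with the dual error $\varepsilon$. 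A useful consistency check is that setting $\varepsilon=0$ gives $\hat y = \bar y$ and $\ell_2 = 0$, recovering the exact cut of Proposition~\ref{dervaluefunction}.
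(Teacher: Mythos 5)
Your proof is correct, and it follows essentially the same duality route the paper uses: the paper defers the proof of Proposition \ref{varprop1} to \cite{guigues2016isddp}, but its in-text proof of Proposition \ref{dervaluefunction} is explicitly described as the exact-solution adaptation of that very argument, and your proposal simply reinstates the inexactness — replacing the vanishing optimality term $\min_{y\in Y}\langle\nabla_y L_{\bar x}(\bar y,\bar\lambda,\bar\mu),y-\bar y\rangle=0$ by the correction $-\ell_2$ and invoking strong duality under (H5) together with dual $\varepsilon$-optimality to control $\mathcal{Q}(\bar x)-L_{\bar x}(\hat y,\hat\lambda,\hat\mu)\leq\varepsilon$. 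No gaps.
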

In Proposition \ref{defcutctk} below, we derive another formula for inexact cuts with an additional assumption of strong convexity:
\begin{itemize}
\item[(H6)] Strong concavity of the dual function: for every $x \in X$ there exists $\alpha_D(x)>0$ and a set $D_x$
containing the set of optimal solutions of dual problem \eqref{dualpb}
such that the dual function $\theta_x$ is strongly concave on $D_x$ with constant of strong concavity $\alpha_D(x)$ 
with respect to $\|\cdot\|_2$.
\end{itemize}

We refer to Section \ref{strongconvdfunction} for conditions on the problem data ensuring Assumption (H6).

If the constants $\alpha(\bar x)$ and $\alpha_D(\bar x)$ in Assumptions (H3) and (H6) are sufficiently large and $n$ is small then the cuts
given by Proposition \ref{defcutctk} are better than the cuts given by Proposition \ref{varprop1}, i.e., $\mathcal{Q}( \bar x) - \mathcal{C}( \bar x)$ is smaller.
We refer to Section \ref{testinginexactcuts} for numerical tests comparing the cuts given by Propositions \ref{varprop1} and
\ref{defcutctk} on quadratic programs.

To proceed, take an optimal primal solution $\bar y$ of
 problem \eqref{vfunctionq} written for $x=\bar x$ and an optimal
 dual solution $(\bar \lambda, \bar \mu)$ of the corresponding dual problem, i.e.,
 problem \eqref{dualpb} written for $x=\bar x$.

With this notation, using Proposition \ref{dervaluefunction}, we have that
$\nabla_x L_{\bar x} ( \bar y, \bar \lambda, \bar \mu  ) \in \partial \mathcal{Q}( \bar x )$. 
Since we only have approximate
primal and dual solutions, $\hat y(\varepsilon)$ and $({\hat \lambda}(\epsilon), \hat \mu(\epsilon) )$ respectively,
we will use the approximate subgradient
$\nabla_x L_{\bar x} ( \hat y(\epsilon), {\hat \lambda}(\epsilon), \hat \mu(\epsilon) )$ instead of $\nabla_x L_{\bar x} ( \bar y, \bar \lambda, \bar \mu  ) $.
To control the error on this subgradient, we assume differentiability of the constraint functions
and that the gradients of these functions are Lipschitz continuous. More precisely, we assume:
\begin{itemize} 
\item[(H7)] $g$ is differentiable on $Y\small{\times}X$  and
 for every $x \in X$ there exists $0 \leq M_2(x) < +\infty$ such that for all $y_1, y_2 \in Y$, we have
$$
\|\nabla_{x} g_{i}(y_1, x) - \nabla_{x} g_{i}(y_2, x)\|_2  \leq M_2( x )  \|y_1 - y_2\|_2,\,i=1,\ldots,p.
$$
\end{itemize}
If Assumptions (H1)-(H7) hold, the following proposition provides an inexact cut 
for $\mathcal{Q}$ at $\bar x$:
\begin{prop}\label{defcutctk} 
Let $\bar x \in X$,
let $\hat y( \varepsilon )$ be an $\epsilon$-optimal feasible primal solution for problem \eqref{vfunctionq}
written for $x= \bar x$ 
and let $( {\hat \lambda}(\epsilon), \hat \mu(\epsilon) )$ be an $\epsilon$-optimal feasible solution of the
corresponding dual problem, i.e., of problem
\eqref{dualpb} written for $x=\bar x$.
Let Assumptions (H1), (H2), (H3), (H4), (H5), (H6), and (H7) hold.
Assume that $( {\hat \lambda}(\epsilon), \hat \mu(\epsilon) ) \in D_{\bar x}$ where $D_{\bar x}$ is defined in (H6) and
let 
\begin{equation}\label{ubgrad}
U = \max_{i=1,\ldots,p} \|\nabla_{x} g_{i}(\hat y(  \varepsilon  ), \bar x)\|_2.
\end{equation}
Let also $\mathcal{L}_{\bar x}$ be any lower bound on $\mathcal{Q}( \bar x) $.
Define
\begin{equation}\label{upperboundmult}
\mathcal{U}_{\bar x}= 
\frac{f(y_{\bar x} , \bar x  )  - \mathcal{L}_{\bar x}  }{\min (-g_{i}(y_{\bar x} ,  \bar x), i=1,\ldots,p )}
\end{equation}
and
$$
\eta(\varepsilon, \bar x)=\varepsilon + \Big( (M_1(  \bar x ) +  M_2(  \bar x ) \mathcal{U}_{\bar x} ) \sqrt{\frac{2}{\alpha( \bar x )}} + \frac{2 \max(   \|B^T\|, \sqrt{p} U) }{\sqrt{\alpha_D(\bar x) }} \Big) \emph{Diam}(X) \sqrt{\varepsilon} .
$$
Then 
$$
\mathcal{C}(x):= f(\hat y( \varepsilon  ) , \bar x) -\eta(\epsilon, \bar x) + \langle \nabla_x L_{\bar x} ( \hat y(\epsilon), {\hat \lambda}(\epsilon), \hat \mu(\epsilon) ) , x - \bar x \rangle
$$
where 
$$
\nabla_x L_{\bar x} ( \hat y(\epsilon), {\hat \lambda}(\epsilon), \hat \mu(\epsilon) )
=\nabla_x f(\hat y(\epsilon), \bar x)+ B^T \hat \lambda(\epsilon) +  \displaystyle \sum_{i=1}^p\; \hat \mu_i(\epsilon) \nabla_x g_i(\hat y(\epsilon), \bar x),
$$
is a cut for $\mathcal{Q}$ at $\bar x$ and the distance 
$\mathcal{Q}( \bar x ) - \mathcal{C}( \bar x)$ between the 
values of $\mathcal{Q}$ and of the cut at $\bar x$ is at most
$\eta(\epsilon, \bar x)$.
\end{prop}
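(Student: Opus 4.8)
The plan is to benchmark the inexact affine function $\mathcal{C}$ against the exact cut one could build from a genuine optimal triple, controlling every discrepancy with the moduli $\alpha(\bar x)$ and $\alpha_D(\bar x)$. Write $\hat y,\hat\lambda,\hat\mu$ for $\hat y(\varepsilon),\hat\lambda(\varepsilon),\hat\mu(\varepsilon)$, and let $\bar y$ be an optimal primal solution of \eqref{vfunctionq} at $x=\bar x$ and $(\bar\lambda,\bar\mu)$ an optimal solution of \eqref{dualpb} at $x=\bar x$; both lie in $D_{\bar x}$. By (H5) there is no duality gap, so $L_{\bar x}(\bar y,\bar\lambda,\bar\mu)=\theta_{\bar x}(\bar\lambda,\bar\mu)=\mathcal{Q}(\bar x)$ with complementary slackness, and Proposition \ref{dervaluefunction} gives $s:=\nabla_x L_{\bar x}(\bar y,\bar\lambda,\bar\mu)\in\partial\mathcal{Q}(\bar x)$, hence $\mathcal{Q}(x)\ge\mathcal{Q}(\bar x)+\langle s,x-\bar x\rangle$ for every $x\in X$. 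Setting $\Delta:=s-\nabla_x L_{\bar x}(\hat y,\hat\lambda,\hat\mu)$, Cauchy--Schwarz and $\mathcal{Q}(\bar x)\ge f(\hat y,\bar x)-\varepsilon$ reduce the statement to showing $\varepsilon+\|\Delta\|_2\Diam(X)\le\eta(\varepsilon,\bar x)$.

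The core of the argument is three distance estimates. First, since $\bar\mu\ge 0$ with the $g_i(\cdot,\bar x)$ convex and $y\mapsto\bar\lambda^T(Ay+B\bar x-b)$ affine, Assumption (H3) and Proposition \ref{immediate1} show $y\mapsto L_{\bar x}(y,\bar\lambda,\bar\mu)$ is strongly convex on $Y$ with modulus $\alpha(\bar x)$; as $\bar y$ minimizes it over $Y$, strong convexity together with the first-order inequality at $\bar y$ gives $\tfrac{\alpha(\bar x)}{2}\|\hat y-\bar y\|_2^2\le L_{\bar x}(\hat y,\bar\lambda,\bar\mu)-\mathcal{Q}(\bar x)$. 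Feasibility of $\hat y$ makes the equality term vanish and $\bar\mu^Tg(\hat y,\bar x)\le 0$, so $L_{\bar x}(\hat y,\bar\lambda,\bar\mu)\le f(\hat y,\bar x)\le\mathcal{Q}(\bar x)+\varepsilon$, yielding $\|\hat y-\bar y\|_2\le\sqrt{2\varepsilon/\alpha(\bar x)}$. Second, Assumption (H6) and optimality of $(\bar\lambda,\bar\mu)$ over $\mathbb{R}^q\times\mathbb{R}_+^p$ give $\tfrac{\alpha_D(\bar x)}{2}\|(\hat\lambda-\bar\lambda,\hat\mu-\bar\mu)\|_2^2\le\theta_{\bar x}(\bar\lambda,\bar\mu)-\theta_{\bar x}(\hat\lambda,\hat\mu)\le\varepsilon$, so $\|(\hat\lambda-\bar\lambda,\hat\mu-\bar\mu)\|_2\le\sqrt{2\varepsilon/\alpha_D(\bar x)}$. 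Third, a Slater-point bound using the $y_{\bar x}$ of (H5)---namely $\mathcal{Q}(\bar x)=\theta_{\bar x}(\bar\lambda,\bar\mu)\le L_{\bar x}(y_{\bar x},\bar\lambda,\bar\mu)\le f(y_{\bar x},\bar x)-\min_i(-g_i(y_{\bar x},\bar x))\,\|\bar\mu\|_1$ combined with $\mathcal{Q}(\bar x)\ge\mathcal{L}_{\bar x}$---gives $\|\bar\mu\|_1\le\mathcal{U}_{\bar x}$ with $\mathcal{U}_{\bar x}$ as in \eqref{upperboundmult}.

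It then remains to assemble $\|\Delta\|_2$. Splitting $\Delta=[\nabla_x f(\bar y,\bar x)-\nabla_x f(\hat y,\bar x)]+B^T(\bar\lambda-\hat\lambda)+\sum_i\bar\mu_i[\nabla_x g_i(\bar y,\bar x)-\nabla_x g_i(\hat y,\bar x)]+\sum_i(\bar\mu_i-\hat\mu_i)\nabla_x g_i(\hat y,\bar x)$ and using (H4), (H7), $\|\bar\mu\|_1\le\mathcal{U}_{\bar x}$, and $\|\nabla_x g_i(\hat y,\bar x)\|_2\le U$ from \eqref{ubgrad}, the two primal terms are bounded by $(M_1(\bar x)+M_2(\bar x)\mathcal{U}_{\bar x})\|\hat y-\bar y\|_2$ and the two dual terms by $\|B^T\|\,\|\bar\lambda-\hat\lambda\|_2+\sqrt p\,U\,\|\bar\mu-\hat\mu\|_2$. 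For the latter the decisive move is to apply Cauchy--Schwarz to the coefficient pair, $\|B^T\|\,\|\bar\lambda-\hat\lambda\|_2+\sqrt p\,U\,\|\bar\mu-\hat\mu\|_2\le\sqrt{\|B^T\|^2+pU^2}\,\|(\bar\lambda-\hat\lambda,\bar\mu-\hat\mu)\|_2\le\sqrt2\max(\|B^T\|,\sqrt pU)\sqrt{2\varepsilon/\alpha_D(\bar x)}$, which collapses to the factor $2\max(\|B^T\|,\sqrt pU)/\sqrt{\alpha_D(\bar x)}$. Substituting the three estimates gives $\|\Delta\|_2\le\big((M_1(\bar x)+M_2(\bar x)\mathcal{U}_{\bar x})\sqrt{2/\alpha(\bar x)}+2\max(\|B^T\|,\sqrt pU)/\sqrt{\alpha_D(\bar x)}\big)\sqrt\varepsilon$, hence $\varepsilon+\|\Delta\|_2\Diam(X)\le\eta(\varepsilon,\bar x)$; plugging this into the inequality of the first paragraph yields $\mathcal{Q}(x)\ge\mathcal{C}(x)$, and evaluating at $\bar x$ with $\mathcal{Q}(\bar x)\le f(\hat y,\bar x)$ bounds the gap by $\eta(\varepsilon,\bar x)$.

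The step needing the most care is the primal estimate: one must observe that adding the affine and convex nonnegatively-weighted constraint terms leaves the modulus $\alpha(\bar x)$ intact, that the no-gap identity routes the $\varepsilon$-optimal value $f(\hat y,\bar x)\le\mathcal{Q}(\bar x)+\varepsilon$ directly into the quadratic bound, and that feasibility of $\hat y$ annihilates the multiplier cross-terms. The only other subtle point is bookkeeping of constants: obtaining exactly $2$ (rather than $2\sqrt2$) in front of the dual contribution forces one to bundle $\|B^T\|$ and $\sqrt pU$ into a single vector before the Cauchy--Schwarz step instead of bounding each summand in isolation.
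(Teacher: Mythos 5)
Your proposal is correct and follows essentially the same route as the paper's proof: start from the exact subgradient $\nabla_x L_{\bar x}(\bar y,\bar\lambda,\bar\mu)\in\partial\mathcal{Q}(\bar x)$, split the gradient discrepancy into the same four terms, bound the primal and dual distances by $\sqrt{2\varepsilon/\alpha(\bar x)}$ and $\sqrt{2\varepsilon/\alpha_D(\bar x)}$ via (H3) and (H6), bound $\|\bar\mu\|_1$ by $\mathcal{U}_{\bar x}$ via the Slater point, and bundle the two dual coefficients before Cauchy--Schwarz to obtain the factor $\sqrt{2}\max(\|B^T\|,\sqrt{p}\,U)$. The only cosmetic differences are that you derive the multiplier bound directly from $y_{\bar x}$ where the paper cites \cite{hhlem}, and you obtain the primal distance from strong convexity of the Lagrangian rather than of $f(\cdot,\bar x)$.
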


\begin{proof} 
For short, we use the notation $\hat y, \hat \lambda, \hat \mu$ instead of 
$\hat y ( \varepsilon ), \hat \lambda ( \varepsilon ), \hat \mu ( \varepsilon )$.
Since $\nabla_x L_{\bar x} ( \bar y , \bar  \lambda , \bar \mu ) \in \partial \mathcal{Q}( \bar x )$,
we have
\begin{equation}\label{qtcut}
\begin{array}{lcl}
\mathcal{Q}( x ) & \geq &  \mathcal{Q}( \bar x  ) +  \langle \nabla_x L_{\bar x} ( \bar y , \bar  \lambda , \bar \mu ) , x -  \bar x \rangle \geq   f(\hat y , \bar x  ) - \varepsilon +   \langle  \nabla_x L_{\bar x} ( \bar y , \bar  \lambda , \bar \mu ), x -  \bar x \rangle. 
\end{array}
\end{equation}
Next observe that
{\small{
\begin{eqnarray}
\| \nabla_x L_{\bar x} ( \bar y , \bar  \lambda , \bar \mu ) - \nabla_x L_{\bar x} ( \hat y, {\hat \lambda}, \hat \mu ) \| & \leq  & 
M_1(  \bar x ) \| \bar y - \hat y\| + \|B^T\|  \|\bar \lambda  - \hat \lambda\| \nonumber\\
&& + \|\displaystyle \sum_{i=1}^{p}\; {\bar \mu}(i) \Big(\nabla_{x} g_{i}(\bar y , {\bar x}) - \nabla_{x} g_{i}(\hat y, {\bar x})   \Big)  \| \nonumber\\
&& + \|\displaystyle \sum_{i=1}^{p}\; \Big({\bar \mu}(i)- {\hat \mu}(i)\Big) \nabla_{x} g_{i}(\hat y , \bar x) \| \nonumber\\
& \leq & M_1(  \bar x ) \|\bar y - \hat y\|+ \|B^T\|  \|\bar \lambda  - \hat \lambda\| + M_2( \bar x) \|\bar \mu\|_1 \|\bar y - \hat y\| + U \sqrt{p} \|\bar \mu - \hat \mu\| \nonumber\\
& \leq & (M_1(  \bar x )  + M_2( \bar x) \| \bar \mu\|_1 ) \|\bar y - \hat y\| +  \sqrt{2}\max( \|B^T\|, U \sqrt{p} ) \sqrt{\|\hat \lambda - \bar \lambda\|^2  + \|\hat \mu - \bar \mu\|^2}.\label{upperbetadiff}
\end{eqnarray}
}}
Using Remark 2.3.3, p.313 in \cite{hhlem} and Assumption (H5) we have for $\|\bar \mu\|_1$ the upper bound
\begin{equation}\label{boundmult}
\|\bar \mu\|_1 \leq \frac{f(y_{\bar x} , \bar x  )  - \mathcal{Q}({\bar x} ) }{\min (-g_{i}(y_{\bar x} ,  \bar x), i=1,\ldots,p )} \leq \mathcal{U}_{\bar x}.
\end{equation} 
Using Assumptions (H3) and (H6), we also get 
\begin{equation}\label{strongoptpd}
\|\hat y - \bar y\|^2 \leq \frac{2 \varepsilon}{\alpha(\bar x)} \mbox{ and } \|\hat \lambda - \bar \lambda\|^2  + \|\hat \mu - \bar \mu\|^2 \leq \frac{2 \varepsilon}{\alpha_D( \bar x )}.
\end{equation}
Combining \eqref{upperbetadiff}, \eqref{boundmult}, and \eqref{strongoptpd}, we get
\begin{eqnarray}\label{finalbornebeta}
\| \nabla_x L_{\bar x} ( \bar y , \bar  \lambda , \bar \mu ) - \nabla_x L_{\bar x} ( \hat y, {\hat \lambda}, \hat \mu ) \|  \leq  \frac{\eta(\varepsilon,\bar x) - \varepsilon}{\mbox{Diam}(X)}.  
\end{eqnarray}
Plugging the above relation into \eqref{qtcut} and using Cauchy-Schwartz inequality, we get 
\begin{equation}\label{cscwhfinalcutvarset}
\begin{array}{lll}
\mathcal{Q}( x ) & \geq &  f(\hat y,  \bar x  ) - \varepsilon +   \langle \nabla_x L_{\bar x} ( \hat y , {\hat \lambda}, \hat \mu ) , x -  \bar x \rangle  +   
\langle \nabla_x L_{\bar x} ( \bar y , \bar  \lambda , \bar \mu ) - \nabla_x L_{\bar x} ( \hat y, {\hat \lambda}, \hat \mu ), x -  \bar x \rangle \\
 & \geq & f(\hat y , \bar x  ) - \varepsilon - \|\nabla_x L_{\bar x} ( \hat y,  {\hat \lambda}, \hat \mu ) - \nabla_x L_{\bar x} ( \bar y , {\bar \lambda}, \bar \mu )\| \mbox{Diam}(X) + \langle \nabla_x L_{\bar x} ( \hat y ,{\hat \lambda}, \hat \mu ), x -  \bar x \rangle\\
 & \geq & f(\hat y, \bar x) -\eta(\varepsilon, \bar x) + \langle  \nabla_x L_{\bar x} ( \hat y , {\hat \lambda}, \hat \mu ), x - \bar x \rangle.
\end{array}
\end{equation}
Finally, since $\hat y \in \mathcal{S}( \bar x)$ we check that
$
\mathcal{Q}( \bar x ) - \mathcal{C}( \bar x ) = \mathcal{Q}( \bar x ) - f(\hat y, \bar x)  + \eta(\varepsilon, \bar x) \leq \eta(\varepsilon, \bar x),
$
which achieves the proof of the proposition.
\hfill
\end{proof}
Observe that the ``slope" $\nabla_x L_{\bar x} ( \hat y(\epsilon), {\hat \lambda}(\epsilon), \hat \mu(\epsilon) )$
of the cut given by Proposition \ref{varprop1} is the same as the ``slope" 
of the cut given by Proposition \ref{defcutctk}.
\begin{rem} If $\hat y( \varepsilon )$ and $(\hat \lambda( \varepsilon  ), \hat \mu(  \varepsilon ))$
are respectively optimal primal and dual solutions, i.e., $\varepsilon=0$, then Proposition \ref{defcutctk} gives, as expected, an exact cut for 
$\mathcal{Q}$ at $\bar x$.
\end{rem}

As shown in Corollary \ref{corinexactcuts}, the formula for the inexact cuts given in Proposition \ref{defcutctk} can be simplified depending if  
there are nonlinear coupling constraints or not, if $f$ is separable (sum of a function of $x$ and of a function of $y$) or not,
and if $g$ is separable.
\begin{cor}\label{corinexactcuts} Consider the value functions $\mathcal{Q}: X \rightarrow \mathbb{R}$ where $\mathcal{Q}(x)$
is given by the optimal value of the following optimization problems:
\begin{equation}
\begin{array}{lll}
(a)\left\{ 
\begin{array}{l}
\min_{y } f(y,x)\\
Ay + B x=b,\\h(y)+k(x)\leq 0,\\
y \in Y,
\end{array}
\right.
&
(b)\left\{ 
\begin{array}{l}
\min_{y } f_0(y)+f_1(x)\\
Ay + B x=b,\\g(y,x)\leq 0,\\
y \in Y,
\end{array}
\right.
&
(c)\left\{ 
\begin{array}{l}
\min_{y } f_0(y)+f_1(x)\\
Ay + B x=b,\\h(y)+k(x)\leq 0,\\
y \in Y,
\end{array}
\right.\\
(d)\left\{ 
\begin{array}{l}
\min_{y }  f(y,x)\\
g(y,x)\leq 0,\\
y \in Y,
\end{array}
\right.
&
(e)\left\{ 
\begin{array}{l}
\min_{y } f(y,x)\\
h(y)+k(x)\leq 0,\\
y \in Y,
\end{array}
\right.
&
(f)\left\{ 
\begin{array}{l}
\min_{y } f_0(y)+f_1(x)\\
g(y,x)\leq 0,\\
y \in Y,
\end{array}
\right.\\
(g)\left\{ 
\begin{array}{l}
\min_{y } f_0(y)+f_1(x)\\
h(y)+k(x)\leq 0,\\
y \in Y,
\end{array}
\right.
& 
(h)\left\{ 
\begin{array}{l}
\min_{y } f(y,x)\\
A y +B x = b,\\
y \in Y,
\end{array}
\right.
& 
(i)\left\{ 
\begin{array}{l}
\min_{y } f_0(y)+f_1(x)\\
A y +B x = b,\\
y \in Y.
\end{array}
\right.
\end{array}
\end{equation}
For problems (b),(c),(f),(g), (i) above define $f(y,x)=f_0(y)+f_1(x)$ and for problems (a), (c), (e), (g) 
define $g(y,x)=h(y)+k(x)$.
With this notation, assume that (H1), (H2), (H3), (H4), (H5), (H6), and (H7) hold for these problems.
If $g$ is defined, let $L_{x}(y, \lambda, \mu)=f(y,x) + \lambda^T (Bx+Ay-b) + \mu^T g(y,x)$ be the Lagrangian 
and define 
$$
U = \max_{i=1,\ldots,p} \|\nabla_{x} g_{i}(\hat y(  \varepsilon  ), \bar x)\| \mbox{ and }\mathcal{U}_{\bar x}= 
\frac{f(y_{\bar x} , \bar x  )  - \mathcal{L}_{\bar x}  }{\min (-g_{i}(y_{\bar x} ,  \bar x), i=1,\ldots,p )}
$$
where  $\mathcal{L}_{\bar x}$ is any lower bound on $\mathcal{Q}( \bar x) $.
If $g$ is not defined, define $L_{x}(y, \lambda)=f(y,x) + \lambda^T (Bx+Ay-b)$.

Let $\bar x \in X$,
let $\hat y$ be an $\epsilon$-optimal feasible primal solution for problem \eqref{vfunctionq}
written for $x= \bar x$ 
and let $( {\hat \lambda}, \hat \mu )$ be an $\epsilon$-optimal feasible solution of the
corresponding dual problem, i.e., of problem
\eqref{dualpb} written for $x=\bar x$.

Then 
$
\mathcal{C}(x)=f(\hat y, \bar x)-\eta(\varepsilon,\bar x)+ \langle s(\bar x) , x - \bar x \rangle
$
is an inexact cut for $\mathcal{Q}$ at $\bar x$ where  the formulas
for $\eta(\varepsilon,\bar x)$ and $s(\bar x)$ in each of cases (a)-(i) above are the following:
\begin{equation}
\begin{array}{l}
(a)\left\{
\begin{array}{ll}
\eta(\varepsilon, \bar x)=\varepsilon + \Big( M_1(  \bar x ) \frac{1}{\sqrt{\alpha(\bar x)}}  +   \sqrt{2} \max(   \|B^T\|, \sqrt{p} U)  \frac{1}{\sqrt{\alpha_D( \bar x ) }} \Big) \emph{Diam}(X) \sqrt{2 \varepsilon},\\
s(\bar x)=\nabla_x f(\hat y, \bar x ) + B^T \hat \lambda + \sum_{i=1}^p \hat \mu_i \nabla_x k_i( \bar x ),
\end{array}
\right.\\
(b)\left\{
\begin{array}{ll}
\eta(\varepsilon, \bar x)=\varepsilon + \Big( M_2( \bar x) \mathcal{U}_{\bar x} \frac{1}{\sqrt{\alpha(\bar x)}} + \sqrt{2} \max(   \|B^T\|, \sqrt{p} U )  \frac{1}{\sqrt{\alpha_D( \bar x ) }} \Big) \emph{Diam}(X) \sqrt{2 \varepsilon},\\
s(\bar x)=\nabla_x f_1(\bar x ) + B^T \hat \lambda + \sum_{i=1}^p \hat \mu_i \nabla_x g_i(\hat y,  \bar x ),
\end{array}
\right.\\
(c)\left\{
\begin{array}{ll}
\eta(\varepsilon, \bar x)=\varepsilon + 2 \max(   \|B^T\|, \sqrt{p} U)\emph{Diam}(X) \sqrt{\frac{\varepsilon}{ \alpha_D( \bar x )}},\\
s(\bar x)=\nabla_x f_1(\bar x ) + B^T \hat \lambda + \sum_{i=1}^p \hat \mu_i \nabla_x k_i( \bar x ),
\end{array}
\right.\\
(d)\left\{
\begin{array}{ll}
\eta(\varepsilon, \bar x)=\varepsilon + \Big( ( M_1(  \bar x )  + M_2( \bar x) \mathcal{U}_{\bar x} )  \frac{1}{\sqrt{\alpha(\bar x)}}  +  U  \sqrt{\frac{p}{\alpha_D( \bar x )}} \Big) \emph{Diam}(X) \sqrt{2 \varepsilon},\\
s(\bar x)=\nabla_x f(\hat y, \bar x ) + \sum_{i=1}^p \hat \mu_i \nabla_x g_i(\hat y ,  \bar x ),
\end{array}
\right.\\
(e)\left\{
\begin{array}{ll}
\eta(\varepsilon, \bar x)=\varepsilon + \Big( \frac{M_1(  \bar x )}{\sqrt{\alpha(\bar x)}} + \sqrt{  \frac{p}{ \alpha_D(\bar x) }} U  \Big) \emph{Diam}(X) \sqrt{2 \varepsilon},\\
s(\bar x)=\nabla_x f(\hat y, \bar x ) +  \sum_{i=1}^p \hat \mu_i \nabla_x k_i( \bar x ),
\end{array}
\right.\\
(f)\left\{
\begin{array}{ll}
\eta(\varepsilon, \bar x)=\varepsilon + \Big( \frac{ M_2( \bar x)}{\sqrt{\alpha(\bar x)}} \mathcal{U}_{\bar x} +  U \sqrt{   \frac{p}{ \alpha_D(\bar x) }} \Big) \emph{Diam}(X) \sqrt{2 \varepsilon},\\
s(\bar x)=\nabla_x f_1(\bar x )  + \sum_{i=1}^p \hat \mu_i \nabla_x g_i(\hat y ,  \bar x ),
\end{array}
\right.\\
(g)\left\{
\begin{array}{ll}
\eta(\varepsilon, \bar x)=\varepsilon +  \emph{Diam}(X) \sqrt{\frac{2 \varepsilon p}{\alpha_D( \bar x )}}U,\\
s(\bar x)=\nabla_x f_1(\bar x ) + \sum_{i=1}^p \hat \mu_i \nabla_x k_i( \bar x ),
\end{array}
\right.\\
(h)\left\{
\begin{array}{ll}
\eta(\varepsilon, \bar x)=\varepsilon + \Big( \frac{M_1(  \bar x )}{\sqrt{\alpha(\bar x)}} +  \frac{\|B^T \|}{ \sqrt{\alpha_D(\bar x) }}  \Big) \emph{Diam}(X) \sqrt{2 \varepsilon},\\
s(\bar x)=\nabla_x f(\hat y, \bar x ) + B^T \hat \lambda,
\end{array}
\right.\\
(i)\left\{
\begin{array}{ll}
\eta(\varepsilon, \bar x)=\varepsilon +  \|B^T \| \sqrt{ \frac{2 \varepsilon}{\alpha_D( \bar x )}}   \emph{Diam}(X),\\
s(\bar x)=\nabla_x f_1(\bar x ) + B^T \hat \lambda.
\end{array}
\right.
\end{array}
\end{equation}
\end{cor}
\begin{proof} It suffices to follow the proof of Proposition \ref{defcutctk}, specialized to cases (a)-(i). For instance, let us check the formulas in case (g).
For (g), 
$
s(\bar x)=\nabla_x L_{\bar x}(\hat y,\hat \mu)=\nabla_x f_1(\bar x ) + \sum_{i=1}^p \hat \mu_i \nabla_x k_i( \bar x )
$
and
\begin{equation}\label{caseccutinexact}
\begin{array}{lll}
\|\nabla_x L_{\bar x}(\hat y,  \hat \mu) - \nabla_x L_{\bar x}(\bar y, \bar \mu)  \| 
& = &  \| \sum_{i=1}^p (\hat \mu_i - \bar \mu_i) \nabla_x k_i( \bar x )  \| \leq U \| \hat \mu - \bar \mu   \|_1 \\
& \leq & U \sqrt{p}\|\hat \mu - \bar \mu\| \leq U \sqrt{p} \sqrt{\frac{2 \varepsilon}{\alpha_D( \bar x )}}.
\end{array} 
\end{equation}
It then suffices to combine \eqref{qtcut} and \eqref{caseccutinexact}.\hfill
\end{proof}

\if{
As shown in Corollary \ref{corinexactcuts} in the Appendix, the formula for the inexact cuts given in Proposition \ref{defcutctk} can be simplified depending if  
there are nonlinear coupling constraints or not, if $f$ is separable (sum of a function of $x$ and of a function of $y$) or not,
and if $g$ is separable.
}\fi

\subsection{Numerical results} \label{testinginexactcuts}

\subsubsection{Argument of the value function in the objective only}

Let $S=\left(
\begin{array}{cc}
S_1 & S_2 \\
S_2^T & S_3
\end{array}
\right)$
be a positive definite  matrix, let $c_1 \in \mathbb{R}^m, c_2 \in \mathbb{R}^n$ be vectors of ones,
and let 
$\mathcal{Q}$ be the value function given by 
\begin{equation}\label{defqxsimple}
\begin{array}{lll}
\mathcal{Q}(x) &= &\left\{ 
\begin{array}{l}
\min_{y \in \mathbb{R}^n} \;f(y,x)=\frac{1}{2}\left(  \begin{array}{c}x\\y\end{array} \right)^T S \left(  \begin{array}{c}x\\y\end{array} \right)  + 
\left(  
\begin{array}{c}
c_1 \\
c_2
\end{array}
\right)^T \left(  \begin{array}{c}x\\y\end{array} \right)\\
y \in Y:=\{y \in \mathbb{R}^n : y \geq 0,\;\sum_{i=1}^n y_i = 1 \},
\end{array}
\right.\\
&=& 
\left\{ 
\begin{array}{l}
\min_{y \in \mathbb{R}^n} \;c_1^T x + c_2^T y + \frac{1}{2} x^T S_1 x + x^T S_2 y + \frac{1}{2} y^T S_3 y \\
y \geq 0,\;\sum_{i=1}^n y_i = 1.
\end{array}
\right.
\end{array}
\end{equation}
Clearly, Assumption (H3) is satisfied with $\alpha(x )=\lambda_{\min}( S_3 )$,
and
$$
\|\nabla_x f( y_2, x)  - \nabla_x f( y_1, x) \| = \|S_2 ( y_2 - y_1 ) \|_2 \leq \| S_2  \|_2  \| y_2   - y_1 \|_2
$$
implying that Assumption (H4) is satisfied with $M_1(  \bar x )=\|S_2\|_2 = \sigma( S_2 )$ where $\sigma( S_2 )$ is the largest singular value of $S_2$.
We take $X=Y$ with  $\mbox{Diam}(X)=\max_{x_1, x_2 \in X} \|x_2 - x_1\|_2 \leq \sqrt{2}$.
With this notation, if $\hat y$ is an $\epsilon$-optimal solution
of \eqref{defqxsimple} written for $x= \bar x$, we compute at $\bar x$ the cut $\mathcal{C}(x)=f(\hat y , \bar x ) - \eta(\varepsilon,\bar x)+ \langle \nabla_{x}f(\hat y , \bar x ), x - \bar x  \rangle 
= f(\hat y , \bar x ) - \eta(\varepsilon,\bar x)+ \langle c_1 + S_1 \bar x + S_2 {\hat y}, x - \bar x  \rangle$ where 
\begin{itemize}
\item $\eta(\varepsilon,\bar x)=\eta_1(\varepsilon,\bar x)=\varepsilon + 2 M_1(  \bar x )  \sqrt{\frac{\varepsilon}{\alpha(\bar x)}}$ using  Proposition \ref{fixedprop3};
\item $\eta(\varepsilon,\bar x)$ is given by  $$\eta(\varepsilon,\bar x)=\eta_2(\varepsilon,\bar x)=
\left\{ 
\begin{array}{l}
\max \; \langle \nabla_y f( \hat y, \bar x ), \hat y - y \rangle \\
y \geq 0,\;\sum_{i=1}^n y_i = 1,
\end{array}
\right.
=
\left\{ 
\begin{array}{l}
\max \; \langle c_2 + S_2^T \bar x + S_3 \hat y , \hat y - y \rangle \\
y \geq 0,\;\sum_{i=1}^n y_i = 1,
\end{array}
\right.
$$
using Proposition \ref{fixedprop1}.
\end{itemize}

We compare in Table \ref{inexactcutsobjective} the values of $\eta_1(\varepsilon,\bar x)$ and  $\eta_2(\varepsilon,\bar x)$
for several values of $m=n$, $\varepsilon$, and $\alpha(\bar x)$. In these experiments
$S$ is of the form $A A^T + \lambda I_{2 n}$ for some $\lambda>0$ and $A$ has random entries in $[-20,20]$.

Optimization problems were solved using Mosek optimization toolbox \cite{mosek},
setting Mosek parameter MSK\_\\DPAR\_INTPNT\_QO\_TOL \_REL\_GAP which corresponds to the relative
error $\varepsilon_r$  on the optimal value to 0.1, 0.5, and 1. In each run, $\varepsilon$ was estimated computing the duality
gap (the difference between the approximate optimal values of the dual and the primal).
Though $\eta_1(\varepsilon,\bar x)$ does not depend on $\bar x$ (because on this example $\alpha$ and $M_1$
do not depend on $\bar x$), the absolute error $\varepsilon$
depends on the run (for a fixed $\varepsilon_r$, different runs corresponding to different $\bar x$
yield different errors $\varepsilon, \eta_1(\varepsilon,\bar x)$ and $\eta_2(\varepsilon,\bar x)$).
Therefore, for each fixed $(\varepsilon_r, \alpha(\bar x), n)$,
the values $\varepsilon,$ $\eta_1(\varepsilon,\bar x)$, and $\eta_2(\varepsilon,\bar x)$ 
reported in the table
correspond to the mean values of $\varepsilon$,  $\eta_1(\varepsilon,\bar x)$, and $\eta_2(\varepsilon,\bar x)$ obtained taking
randomly 50 points in $X$. We see that the cuts computed by Proposition \ref{fixedprop3} are much more conservative on nearly all 
combinations of parameters, except on three of these combinations when $n=10$ and $\alpha(\bar x)=10^6$ is very large.

\begin{table}
\centering
\begin{tabular}{|c|c|c|c|c||c|c|c|c|c|}
\hline
$\varepsilon$  & $\alpha(\bar x)$  &  $n$  &   $\eta_1$ &  $\eta_2$ & $\varepsilon$  & $\alpha(\bar x)$  &  $n$  &   $\eta_1$ &  $\eta_2$ \\
\hline
 0.0024 & 102.9         &  10   &  1.76   &         0.025  & 0.0061  &   190.2       &  10   &  2.73   &         0.026  \\
\hline
0.0080  &    10 087      &  10   &  0.86   &         0.054  & 0.0024  &     $10^6$     &  10   &  0.076   &         0.354  \\
\hline
 0.016 &   129.0       &  10   &  9.81   &         0.047  &  0.0084 &  174.5        &  10   &  4.85   &         0.037  \\
\hline
 0.029 &   10054       &  10   &  2.49   &         0.128  & 0.002  &   $10^6$       &  10   &  0.09   &         0.342  \\
\hline
0.008  &   112.3       &  10   &  8.07   &         0.043  & 0.008  &  150.0        &  10   &  6.36   &         0.022  \\
\hline
0.018  &   10 090       &  10   &  1.29   &         0.078  & 0.0019  &  $10^6$        &  10   &  0.06   &         0.442  \\
\hline
\hline
0.15  &   531.9       &  100   &   175.6 &   0.3        & 0.18  &      665.3    &  100   &  183.5   &  0.3        \\
\hline
0.23 &    10 687 &  100   &  44.5  &    0.2       & 0.03  &   $10^6$       &  100   & 2.1    &   0.9       \\
\hline
0.17 &   676.2       &  100   &  185.7  &   0.2        & 0.09  &  734.3       &  100   & 106.5   &  0.2        \\
\hline
0.11 &    10 638      &  100   & 37.9   &    0.2       & 0.02  &  $10^6$        &  100   & 1.7    &  0.3        \\
\hline
0.05 &    660      &  100   & 106.7   &     0.2      & 0.40  &  777        &  100   &   253.8 & 0.4         \\
\hline
0.07 &  10 585        &  100   &  32.6  &     0.2      & 0.02  & $10^6$         &  100   &  1.3   &  0.4        \\
\hline
\hline
6.78  &    6017.9      &  1000   & 4177.8   &   9.5        & 2.69  & 5991.4         &  1000   & 2778.8    &   6.8       \\
\hline
 8.12&    15 722      &  1000   &  3059.5  &     11.1      & 0.99  &  $10^6$        &  1000   & 132.1    &   3.2      \\
\hline
7.40 &    5799      &  1000   & 4160.2   &    9.8       & 7.83  &  6020        &  1000   & 4590.7   &     9.3     \\
\hline
12.5 &   15860       &  1000   &  4001.6  &    14.6       & 1.3  &   $10^6$       &  1000   &  153.6   &   3.47      \\
\hline
 9.9&   6065       &  1000   &  4996.4  &    11.8      & 8.3  &  5955        &  1000   & 4034.9   &  8.3        \\
\hline
7.2 &   15 895       &  1000   &  2564.3  &    3.4       &  9.7 &  $10^6$       &  1000   & 117.2    &   1.8       \\
\hline
\end{tabular}
\caption{Values of $\eta(\varepsilon,\bar x)=\eta_1(\varepsilon,\bar x)$ (resp. $\eta(\varepsilon,\bar x)=\eta_2(\varepsilon,\bar x)$) for the inexact cuts 
given by Proposition \ref{fixedprop3} (resp. Proposition \ref{fixedprop1}) for value function 
\eqref{defqxsimple} for various values of $n$ (problem dimension), $\alpha(\bar x)=\lambda_{\min}(S_3)$, and
$\varepsilon$.}
\label{inexactcutsobjective}
\end{table}

\subsubsection{Argument of the value function in the objective and constraints}

We close this section comparing the error terms in the cuts given by Propositions \ref{varprop1} and \ref{defcutctk}
on a very simple problem with a quadratic objective and a quadratic constraint. 

Let $S=\left(
\begin{array}{cc}
S_1 & S_2 \\
S_2^T & S_3
\end{array}
\right)$
be a positive definite matrix, let $c_1, c_2 \in \mathbb{R}^n$,
and let  $\mathcal{Q}:X \rightarrow \mathbb{R}$ be the value function given by 
\begin{equation}\label{defqxsimple2}
\mathcal{Q}(x)  = 
\min_{y \in \mathbb{R}^n} \;\{ f(y,x) :  g_1(y,x) \leq 0 \},
\end{equation}
where
\begin{equation}\label{defqxsimple3}
\begin{array}{lll}
f(y,x)&=&\frac{1}{2}\left(  \begin{array}{c}x\\y\end{array} \right)^T S \left(  \begin{array}{c}x\\y\end{array} \right)  + 
\left(  
\begin{array}{c}
c_1 \\
c_2
\end{array}
\right)^T \left(  \begin{array}{c}x\\y\end{array} \right)\\
& = & c_1^T x + c_2^T y + \frac{1}{2} x^T S_1 x + x^T S_2 y + \frac{1}{2} y^T S_3 y, \\
g_1(y,x)&=&\frac{1}{2} \|y - y_0\|_2^2 + \frac{1}{2}\|x - x_0\|_2^2 - \frac{R^2}{2},\\
X&=&\{x \in \mathbb{R}^n : \|x-x_0\|_2 \leq 1\}.
\end{array}
\end{equation}
In what follows, we take $R=5$ and $x_0, y_0 \in \mathbb{R}^n$ given by 
$x_0(i)=y_0(i)=10, i=1,\ldots,n$.
Clearly, for fixed $\bar x \in X$ and any feasible $y$ for \eqref{defqxsimple2}, \eqref{defqxsimple3} written for $x=\bar x$, we have 
$$
\left\| \left( \begin{array}{c}x_0\\y_0\end{array} \right) \right\| + R \geq \left\| \left( \begin{array}{c}\bar x \\y\end{array} \right) \right\| \geq  \left\| \left( \begin{array}{c}x_0\\y_0\end{array} \right) \right\| - R.
$$
Knowing that with our problem data $\left\| \left( \begin{array}{c}x_0\\y_0\end{array} \right) \right\| - R>0$, we obtain the bound 
$
\mathcal{Q}(\bar x) \geq \mathcal{L}_{\bar x}$
where
$$ 
\mathcal{L}_{\bar x} =  \frac{1}{2} \lambda_{\min}( S ) \Big(  \left\| \left( \begin{array}{c}x_0\\y_0\end{array} \right) \right\|-R\Big)^2  - 
\Big( \left\| \left( \begin{array}{c}x_0\\y_0\end{array} \right) \right\| + R  \Big) \left\|\left( \begin{array}{c}c_1\\c_2\end{array} \right) \right\|_2.
$$
Next, for every $\bar x \in X$ we have  $g_1(y_0, \bar x)<0$ which gives the upper bound 
\begin{equation}\label{formulaupperboundmu}
\mathcal{U}_{\bar x} = \frac{\mathcal{L}_{\bar x}-f(y_0,\bar x)}{g_1(y_0,\bar x)}
\end{equation}
for any optimal dual solution $\bar \mu \geq 0$ of the dual of \eqref{defqxsimple2}, \eqref{defqxsimple3} written for $x=\bar x$.
Making the change of variable $z=y-y_0$, we can express \eqref{defqxsimple2} under the form  
\eqref{pbquadratic} where 
\begin{equation}\label{varreformulation}
\begin{array}{l}
Q_0=S_3, a_0=a_0(x)=c_2 + S_2^T x + S_3 y_0, b_0=b_0(x)=\frac{1}{2}x^T S_1 x + c_1^T x + y_0^T(c_2+S_2^T x) + \frac{1}{2}y_0^T S_3 y_0,\\ 
Q_1=I_n, a_1=0, b_1=b_1(x)=\frac{1}{2}(\|x - x_0\|_2^2 - R^2 ).
\end{array}
\end{equation}
Therefore, using Proposition \ref{strongconcquad}, we have that dual function $\theta_{\bar x}$ for \eqref{defqxsimple2} is given by
\begin{equation}\label{formuladualfunction}
\theta_{\bar x}(\mu)=-\frac{1}{2}a_0(\bar x)^T (S_3+\mu I_n )^{-1} a_0(\bar x) +b_0( \bar x ) + \mu b_1(\bar x )
\end{equation}
with $a_0, b_0, b_1$ given by \eqref{varreformulation}
and setting
$$
\alpha_D( \bar x )=a_0(\bar x)^T(S_3+ \mathcal{U}_{\bar x} I_n )^{-3} a_0(\bar x),
$$
if $a_0(\bar x) \neq 0$ then $\theta_{\bar x}$ is strongly concave 
on the interval $[0,\mathcal{U}_{\bar x}]$ with constant of strong concavity $\alpha_D( \bar x )$
where 
$\mathcal{U}_{\bar x}$ is given by \eqref{formulaupperboundmu}.
Let $\hat y$ be an $\varepsilon$-optimal primal solution of \eqref{defqxsimple2} written for $x = \bar x$ and let $\hat \mu$ be an $\varepsilon$-optimal solution of its dual.
If $a_0(\bar x) \neq 0$, 
we obtain for  $\mathcal{Q}$ the cut 
\begin{equation}\label{inexcutex21}
\left\{
\begin{array}{lll} 
\mathcal{C}_1(x)&=&f(\hat y, \bar x ) - \eta_1(\varepsilon, \bar x ) + \langle \nabla_x L_{\bar x}( \hat y, \hat \mu ) , x - \bar x \rangle \mbox{ where }\\ 
\eta_1(\varepsilon, \bar x ) & = & \varepsilon + D(X) \sqrt{2 \varepsilon}\Big( \frac{M_1(\bar x)}{\sqrt{ \alpha(\bar x)}} +   \frac{\|\bar x - x_0\|}{\sqrt{\alpha_D(\bar x)}}  \Big)\mbox{ with }D(X)=2, M_1(\bar x)=\|S_2\|_2, \alpha(\bar x)=\lambda_{\min}(S_3),\\
\nabla_x L_{\bar x}(\hat y, \hat \mu ) & = & S_1 \bar x + c_1 + S_2 \hat y + \hat \mu ( \bar x - x_0).
\end{array}
\right.
\end{equation}

We now apply Proposition \ref{varprop1} to obtain another inexact cut for  $\mathcal{Q}$ at $\bar x \in X$ rewriting 
\eqref{defqxsimple2} under the form \eqref{vfunctionq} with $Y$ the compact set $Y=\{y \in \mathbb{R}^n : \|y - y_0\|_2 \leq R \}$:
\begin{equation}\label{reformdefqxsimple}
\mathcal{Q}(x)  = 
\min_{y \in \mathbb{R}^n} \;\left\{ f(y,x) :  g_1(y,x) \leq 0, \|y-y_0\|_2 \leq R \right\}.
\end{equation}

Applying Proposition \ref{varprop1} to reformulation \eqref{reformdefqxsimple} of \eqref{defqxsimple2}, we obtain for $\mathcal{Q}$ the inexact cut $\mathcal{C}_2$  at $\bar x$ where
\begin{equation}\label{inexcutex22}
\left\{
\begin{array}{lll} 
\mathcal{C}_2(x)&=&f(\hat y, \bar x ) - \eta_2(\varepsilon, \bar x ) + \langle \nabla_x L_{\bar x}( \hat y, \hat \mu ) , x - \bar x \rangle \mbox{ with }\\ 
\eta_2(\varepsilon, \bar x ) & = & -\min\{\langle \nabla_y L_{\bar x}(\hat y , \hat \mu) , y - \hat y \rangle : \|y-y_0\|_2 \leq R \},\\
& =& \langle \nabla_y L_{\bar x}(\hat y , \hat \mu) , \hat y - y_0  \rangle  + R \| \nabla_y L_{\bar x}(\hat y , \hat \mu) \|_2,\\
\nabla_x L_{\bar x}(\hat y, \hat \mu ) & = & S_1 \bar x + c_1 + S_2 \hat y + \hat \mu ( \bar x - x_0),\\
\nabla_y L_{\bar x}(\hat y, \hat \mu ) & = & S_3 \hat y + S_2^T \bar x +   c_2 + \hat \mu ( \hat y  - y_0).
\end{array}
\right.
\end{equation}
\if{
\par {\textbf{Quadratic objective and linear constraint.} 
Consider the value function $\mathcal{Q}:X \rightarrow \mathbb{R}$ given by\footnote{In this very simple example,
we can compute directly $\theta_x(\lambda)=-\frac{n}{2} \lambda^2 +\lambda (-1+\sum_{i=1}^n x_i) + \frac{1}{2}\|x\|_2^2$, the optimal dual
solution $\bar \lambda = \frac{1}{n}(-1+\sum_{i=1}^n x_i)$, and the optimal primal solution $\bar y = \Big(\frac{1-\sum_{i=1}^n x_i}{n}\Big) e$
where e is a vector in $\mathbb{R}^n$ with all components equal to one.}
\begin{equation}\label{defQxex22}
\mathcal{Q}(x)= \min_{y \in \mathbb{R}^n} \left\{ \frac{1}{2}\|x\|_2^2 + \frac{1}{2}\|y\|_2^2 : \sum_{i=1}^n (x_i + y_i ) = 1 \right\}  
\end{equation}
where $X=\{x \in \mathbb{R}^n : x\geq 0 , \sum_{i=1}^n x_i \leq 1/2\}$ with $D(X)=\frac{1}{\sqrt{2}}$.
Using proposition , dual function $\theta_{\bar x}$ of \eqref{defQxex22} is strongly concave with constant
of strong concavity $\alpha_D( \bar x )=\lambda_{\min}(A A^T)$ where $A$  has one line with all components
equal to one. Therefore $\alpha_D( \bar x )=n$. Using Proposition we obtain the cut
\begin{equation}\label{inexcutex22f}
\left\{
\begin{array}{lll} 
\mathcal{C}_1(x)&=& \frac{1}{2}( \| \bar x \|_2^2  + \|\hat y\|_2^2 ) - \eta_1(\varepsilon, \bar x ) + \langle \nabla_x L_{\bar x}( \hat y, \hat \lambda ) , x - \bar x \rangle \mbox{ where }\\ 
\eta_1(\varepsilon, \bar x ) & = & \varepsilon + D(X) \sqrt{n} \sqrt{\frac{2 \varepsilon}{\alpha_D( \bar x )}} = \varepsilon + 2 \sqrt{2 \varepsilon},\\
\nabla_x L_{\bar x}(\hat y, \hat \lambda  ) & = & \bar x + \hat \lambda e \mbox{ with }e_i=1,i=1,\ldots,n.
\end{array}
\right.
\end{equation}
On the other hand, Proposition \ref{varprop1} gives the cut 
\begin{equation}\label{inexcutex234f}
\left\{
\begin{array}{lll} 
\mathcal{C}_2(x)&=& \frac{1}{2}( \| \bar x \|_2^2  + \|\hat y\|_2^2 ) - \eta_2(\varepsilon, \bar x ) + \langle \nabla_x L_{\bar x}( \hat y, \hat \lambda ) , x - \bar x \rangle \mbox{ where }\\ 
\eta_1(\varepsilon, \bar x ) & = & -\min \{\langle \hat y + \hat \lambda e , \hat y - y \rangle : \frac{1}{2} \leq \sum_{i=1}^n y_i \leq 1 \},\\
\nabla_x L_{\bar x}(\hat y, \hat \lambda  ) & = & \bar x + \hat \lambda e \mbox{ with }e_i=1,i=1,\ldots,n.
\end{array}
\right.
\end{equation}
}\fi

As in the previous example, we take $S$ of form 
$S=AA^T + \lambda I_{2 n}$ where the entries of $A$ are randomly selected
in the range $[-20,20]$. We also take $c_1(i)=c_2(i)=1,i=1,\ldots,n$.
For 8 values of the pair $(n,\lambda)$, namely 
$(n,\lambda) \in \{(1,1), (10,1),(100,1),(1000,1),(1,100),(10,100),(100,100),(1000,100)\}$, we generate a matrix $S$ of form $AA^T + \lambda I_{2 n}$
where the entries of $A$ are realizations of independent random variables with uniform distribution in $[-20,20]$.
In each case, we select randomly $\bar x \in X$ and solve \eqref{defqxsimple2}, \eqref{defqxsimple3} and its dual 
written for $x= \bar x$ using Mosek interior point solver. The value of $\alpha(\bar x)=\lambda_{\min}(S_3)$, the 
dual function $\theta_{\bar x}(\cdot)$, and 
the dual iterates computed along the iterations are reported in Figure \ref{fig:f_1} in the Appendix.
Figure \ref{fig:f_2} shows the plots of $\eta_{1}(\varepsilon_k, \bar x)$ and $\eta_{2}(\varepsilon_k, \bar x)$
as a function of iteration $k$ where $\varepsilon_k$ is the duality gap at iteration $k$.

The cuts computed by Proposition \ref{defcutctk} are more conservative than cuts given by Proposition \ref{varprop1}
on nearly all instances and iterations.
We also see that, as expected, the error terms  $\eta_1(\varepsilon_k, \bar x)$ and $\eta_2(\varepsilon_k, \bar x)$
go to zero when $\varepsilon_k$ goes to zero (see the proof of Theorem \ref{ismdrn1prop} for a proof of this statement).

\section{Inexact Stochastic Mirror Descent for two-stage nonlinear stochastic programs}\label{sec:ismd}

The algorithm to be described in this section is an inexact extension of  SMD \cite{nemjudlannem09}
to solve 
\begin{equation}\label{defpb4}
\left\{
\begin{array}{l}
\min \; f(x_1):=f_1( x_1 ) + \mathcal{Q}(x_1) \\
x_1 \in X_1
\end{array}
\right.
\end{equation}
with $X_1 \subset \mathbb{R}^n$ a convex, nonempty, and compact set, and $\mathcal{Q}(x_1)=\mathbb{E}_{\xi_2}[ \mathfrak{Q}( x_1 , \xi_2 ) ]$, $\xi_2$ is a random vector
with probability distribution $P$ on  $\Xi \subset \mathbb{R}^k$, and
\begin{equation}\label{pbsecondstage4}
\mathfrak{Q}( x_1 , \xi_2 ) = 
\left\{
\begin{array}{l}
\min_{x_2} \; f_2( x_2 , x_1 , \xi_2) \\
x_2 \in X_2(x_1 , \xi_2 ):=\{x_2 \in \mathcal{X}_2 : \;A x_2 + B x_1 = b, \;g(x_2, x_1 , \xi_2) \leq 0\}.
\end{array}
\right.
\end{equation}
Recall that $\xi_2$ contains the random variables in $(A,B,b)$ and eventually other sources
of randomness.
Let $\|\cdot\|$ be a norm on $\mathbb{R}^n$ and let $\omega: X_1\to \mathbb{R}$ be a {\sl distance-generating function}.
This function should
\begin{itemize}
\item be convex and continuous on $X_1$,
\item admit on $X_1^o=\{x\in X_1:\partial \omega(x)\neq\emptyset\}$ a selection $\omega'(x)$ of subgradients, and
\item be compatible with $\|\cdot\|$, meaning that $\omega(\cdot)$ is strongly convex with constant of strong convexity $\mu(\omega)>0$ with respect to
the norm $\|\cdot\|$:
$$
( \omega'(x)-\omega'(y))^T (x-y) \geq \mu( \omega) \|x-y\|^2\,\,\forall x,y\in X_1^o.
$$
\end{itemize}
We also define
\begin{enumerate}
\item the {\sl $\omega$-center of $X_1$} given by $x_{1 \omega} =\displaystyle \argmin_{x_1 \in X_1} \, \omega(x_1)\in X_1^o$;
\item the {\sl Bregman distance} or prox-function
\begin{equation}\label{strong}
V_x(y)=\omega(y)-\omega(x)- (y-x)^T \omega'(x),
\end{equation}
for $x\in X_1^o$, $y\in X_1$;
\item the {\sl $\omega$-radius of $X_1$} defined as
\begin{equation}\label{defDomega}
D_{\omega, X_1}=\sqrt{2 \Big[\max_{x \in X_1}\omega(x)-\min_{x \in X_1} \omega(x)\Big]}.
\end{equation}
\item {\sl The proximal mapping}
\begin{equation} \label{defprox}
\Prox_x(\zeta)=\argmin_{y\in X_1} \{\omega(y)+y^T (\zeta-\omega'(x)) \} \;\;\; [x\in X_1^o,\zeta\in \mathbb{R}^n],
\end{equation}
taking values in $X_1^o$.\\
\end{enumerate}

We describe below ISMD, an inexact variant of SMD for solving problem \eqref{defpb4} in which primal and dual
second stage problems are solved approximately.

For $x_1 \in X_1$, $\xi_2 \in \Xi$, and $\varepsilon \geq 0$, we denote by
$x_2(x_1,\xi_2,\varepsilon)$ an  
$\varepsilon$-optimal feasible primal solution of
\eqref{pbsecondstage4}, i.e., $x_2(x_1,\xi_2,\varepsilon) \in X_2(x_1,\xi_2)$ and
$$
\mathfrak{Q}(x_1,\xi_2) \leq f_2(x_2,x_1,\xi_2) \leq \mathfrak{Q}(x_1,\xi_2) + \varepsilon.
$$
We now define $\varepsilon$-optimal dual second stage solutions.
For $x_1 \in X_1$ and $\xi_2 \in \Xi$ 
let 
$$
L_{x_1, \xi_2}(x_2 , \lambda , \mu ) =f_2( x_2 , x_1, \xi_2) + \langle \lambda , A x_2 + B x_1 - b \rangle  + \langle   \mu ,  g(x_2, x_1, \xi_2) \rangle,
$$
and let $\theta_{x_1, \xi_2}$ be the dual function given by
\begin{equation}\label{defdualfunctionsstage}
\theta_{x_1, \xi_2}(\lambda, \mu) = \left\{ 
\begin{array}{l}
\min \; L_{x_1, \xi_2}(x_2 , \lambda , \mu )\\
x_2 \in \mathcal{X}_2.
\end{array}
\right.
\end{equation}
For $x_1 \in X_1$, $\xi_2 \in \Xi$, and $\varepsilon \geq 0$, we denote by
$(\lambda(x_1,\xi_2,\varepsilon), \mu(x_1,\xi_2,\varepsilon))$
an $\varepsilon$-optimal feasible solution of the dual problem 
\begin{equation}\label{dualproblemsecondstage}
\left\{ 
\begin{array}{l}
\max \; \theta_{x_1, \xi_2}( \lambda , \mu ) \\
\mu \geq 0, \lambda = A x_2 + B x_1 -b, x_2 \in \mbox{Aff}(\mathcal{X}_2).
\end{array}
\right.
\end{equation}
Under Slater-type constraint qualification conditions to be specified in Theorems \ref{ismdrn1prop}
and \ref{ismdrn2}, the optimal values of primal second stage problem \eqref{pbsecondstage4}
and dual second stage problem \eqref{dualproblemsecondstage} are the same and 
$(\lambda(x_1,\xi_2,\varepsilon), \mu(x_1,\xi_2,\varepsilon))$ satisfies:
$$
\mu(x_1,\xi_2,\varepsilon) \geq 0, \lambda(x_1,\xi_2,\varepsilon) = A x_2 + B x_1 -b,
$$
for some $x_2 \in \mbox{Aff}(\mathcal{X}_2)$ and
$$
\mathfrak{Q}(x_1, \xi_2) - \varepsilon \leq \theta_{x_1, \xi_2}( \lambda(x_1,\xi_2,\varepsilon), \mu(x_1,\xi_2,\varepsilon) ) \leq \mathfrak{Q}(x_1, \xi_2).
$$
We also denote by
$D_{X_1}=\max_{x, y \in X_1} \|y-x\|$
the diameter of $X_1$, by $s_{f_1}( x_1)$ a subgradient of $f_1$ at $x_1$, 
and we define
\begin{equation}\label{defGH}
\begin{array}{lll}
H(x_1 , \xi_2, \varepsilon ) =\nabla_{x_1} f_2( x_2(x_1,\xi_2,\varepsilon), x_1, \xi_2) + B^T \lambda(x_1,\xi_2,\varepsilon) +
\sum_{i=1}^p \mu_i(x_1,\xi_2,\varepsilon) \nabla_{x_1} g_i(x_2(x_1,\xi_2,\varepsilon), x_1, \xi_2),\\
G(x_1 , \xi_2, \varepsilon ) =s_{f_1}( x_1 ) + H(x_1 , \xi_2, \varepsilon ).
\end{array}
\end{equation}
\rule{\linewidth}{1pt}
\par {\textbf{Inexact Stochastic Mirror Descent (ISMD) for risk-neutral two-stage nonlinear stochastic problems.}} \\
\par {\textbf{Parameters:}} Sequence $(\varepsilon_t)$ and $\theta>0$.\\
\par {\textbf{For }}$N=2,3,\ldots,$  
\par \hspace*{0.7 cm}Take $x_1^{N,1} = x_{1 \omega}$. 
\par \hspace*{0.7cm}{\textbf{For }}$t=1,\ldots,N-1$, sample a realization $\xi_2^{N,t}$ of $\xi_2$ (with corresponding realizations
$A^{N,t}$ of $A$, $B^{N,t}$ of $B$, and $b^{N,t}$ of $b$), compute an
$\varepsilon_t$-optimal solution $x_2^{N,t}$ of the problem
\begin{equation}\label{primalproblem}
\mathfrak{Q}( x_1^{N,t} , \xi_2^{N,t} ) = \left\{ 
\begin{array}{l}
\min_{x_2} \; f_2( x_2, x_1^{N,t}, \xi_2^{N,t}) \\
A^{N,t} x_2 + B^{N,t} x_1^{N,t} = b^{N,t},\\
g(x_2, x_1^{N,t}, \xi_2^{N,t}) \leq 0,\\
x_2 \in \mathcal{X}_2,
\end{array}
\right. 
\end{equation}
and an $\varepsilon_t$-optimal solution $(\lambda^{N,t}, \mu^{N,t})=(\lambda(x_1^{N,t},\xi_2^{N,t},\varepsilon_t), \mu(x_1^{N,t},\xi_2^{N,t},\varepsilon_t))$  of the dual problem 
\begin{equation}\label{dualpbmodif}
\left\{ 
\begin{array}{l}
\max \; \theta_{x_1^{N,t}, \xi_2^{N,t}}( \lambda , \mu ) \\
\mu \geq 0, \lambda = A^{N,t} x_2 + B^{N,t} x_1^{N,t} -b^{N,t}, x_2 \in \mbox{Aff}(\mathcal{X}_2)
\end{array}
\right.
\end{equation}
used 
to compute $G(x_{1}^{N,t} ,\xi_2^{N,t}, \varepsilon_t )$ given by \eqref{defGH} replacing 
$(x_1, \xi_2, \varepsilon)$
by $(x_1^t, \xi_2^t, \varepsilon_t)$.\footnote{Any optimization solver for convex nonlinear programs able to provide 
$\varepsilon_t$-optimal solutions can be used (for instance an interior point solver).}
\par \hspace*{0.7cm}Compute $\gamma_t(N)=\frac{\theta}{\sqrt{N}}$ and
\begin{equation}\label{rsaiterations}
\begin{array}{rcl}
x_1^{N, t+1}&=&\Prox_{x_1^{N, t}}(\gamma_t(N) G(x_1^{N, t} ,\xi_2^{N,t}, \varepsilon_t )).
\end{array}
\end{equation}
\par \hspace*{0.7cm}Compute
\begin{equation}\label{outputsrsa}
\begin{array}{l}
x_1( N  ) = \displaystyle \frac{1}{\Gamma_N} \sum_{\tau=1}^N \gamma_\tau( N) x_1^{N,\tau}  \mbox{ and }\\
{\hat f}_N =\displaystyle \frac{1}{\Gamma_N}\left[ \sum_{\tau=1}^N  \gamma_\tau( N) \Big(  f_1(x_1^{N,\tau}) + f_2( x_2^{N,\tau }, x_1^{N,\tau}, \xi_2^{N,\tau}) \Big)  \right] \mbox{ with }\Gamma_N=\displaystyle \sum_{\tau=1}^N \gamma_\tau( N ).
\end{array}
\end{equation}
\par \hspace*{0.7cm}{\textbf{End For}}
\par {\textbf{End For}}\\
\rule{\linewidth}{1pt}
\begin{rem} In practise ISMD is run fixing the number $N$ of inner iterations, i.e., we fix $N$
and compute $x_1(N)$ and $\hat f_N$. 
\end{rem}
Convergence of Inexact Stochastic Mirror Descent for solving \eqref{defpb4} can be shown when 
error terms $(\varepsilon_t)$ asymptotically vanish:
\begin{thm}[Convergence of ISMD]\label{ismdrn1prop} Consider problem \eqref{defpb4} and assume that (i) $X_1$ and $\mathcal{X}_2$
are nonempty, convex, and compact, (ii)
$f_1$ is convex, finite-valued, and has bounded subgradients on $X_1$, (iii) for every $x_1 \in X_1$ and $x_2 \in \mathcal{X}_2$,
$f_2(x_2,x_1,\cdot)$ and $g_i(x_2,x_1,\cdot),i=1,\ldots,p$, are measurable, (iv) for every $\xi_2 \in \Xi$ the functions 
$f_2(\cdot,\cdot,\xi_2)$ and $g_i(\cdot,\cdot,\xi_2),i=1,\ldots,p$, are convex and continuously differentiable on $\mathcal{X}_2 \times X_1$,
(v) $\exists \kappa >0$ and $r>0$ such that for all $x_1 \in X_1$, for all ${\tilde \xi}_2 \in \Xi$,
there exists $x_2 \in \mathcal{X}_2$
such that $\mathbb{B}(x_2 ,r) \cap \mbox{Aff}(\mathcal{X}_2) \neq \emptyset$,
${\tilde A} x_2 + {\tilde B} x_1 ={\tilde b}$, and $g(x_2,x_1,{\tilde \xi}_2)<-\kappa e$ where $e$
is a vector of ones. If $\gamma_t=\frac{\theta}{\sqrt{N}}$ for some $\theta>0$, if the support $\Xi$
of $\xi_2$ is compact, and if $\lim_{t \rightarrow \infty} \varepsilon_t = 0$, then
$$
\lim_{N \rightarrow +\infty} \mathbb{E}[ f(x_1(N)) ]   = \lim_{N \rightarrow +\infty} \mathbb{E}[ {\hat f}_N ] = f_{1 *} 
$$
where $f_{1 *}$ is the optimal value of \eqref{defpb4}.
\end{thm}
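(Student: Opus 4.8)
The plan is to recast one outer iteration of ISMD as a robust stochastic approximation / mirror descent run driven by a \emph{biased} stochastic first-order oracle, the bias being exactly the error of the inexact cuts. Fix the outer index $N$, write $\gamma_t=\gamma_t(N)=\theta/\sqrt{N}$, abbreviate $x^t=x_1^{N,t}$, $\xi^t=\xi_2^{N,t}$, and let $G_t=G(x^t,\xi^t,\varepsilon_t)$, $H_t=H(x^t,\xi^t,\varepsilon_t)$ be the vectors in \eqref{defGH}. Let $\mathcal{F}_t$ be the $\sigma$-field generated by $\xi^1,\dots,\xi^{t-1}$, so that $x^t$ is $\mathcal{F}_t$-measurable while $\xi^t$ is independent of $\mathcal{F}_t$. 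I will show that $g_t:=\mathbb{E}[G_t\mid\mathcal{F}_t]$ is an $\mathbb{E}[\eta_t\mid\mathcal{F}_t]$-subgradient of the objective $f=f_1+\mathcal{Q}$ at $x^t$, with the errors $\eta_t$ vanishing, and that $\mathbb{E}[\|G_t\|_*^2]$ is bounded uniformly in $t,N$; the result then follows from the standard prox-recursion together with a Ces\`aro argument.

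First I would build the oracle. For fixed $\xi^t$, the map $x_1\mapsto\mathfrak{Q}(x_1,\xi^t)$ is a value function of the form \eqref{vfunctionq} (with $y=x_2$, $Y=\mathcal{X}_2$), and Assumptions (i)--(v) ensure that (H1), (H2), (H5) and the differentiability hypotheses of Proposition \ref{varprop1} hold. Hence, with $\hat y=x_2^{N,t}$ and $(\hat\lambda,\hat\mu)=(\lambda^{N,t},\mu^{N,t})$, Proposition \ref{varprop1} yields that $H_t=\nabla_{x_1}L_{x^t,\xi^t}(\hat y,\hat\lambda,\hat\mu)$ is the slope of a cut:
\begin{equation*}
\mathfrak{Q}(y,\xi^t)\ \geq\ \mathfrak{Q}(x^t,\xi^t)-\eta_t+\langle H_t,\,y-x^t\rangle\quad\forall y\in X_1,\qquad \eta_t=\varepsilon_t+\ell_2(\hat y,x^t,\hat\lambda,\hat\mu).
\end{equation*}
The quantity $\ell_2$ is controlled \emph{without} any strong-convexity assumption: convexity of $L_{x^t,\xi^t}(\cdot,\hat\lambda,\hat\mu)$ gives $\ell_2\leq L_{x^t,\xi^t}(\hat y,\hat\lambda,\hat\mu)-\theta_{x^t,\xi^t}(\hat\lambda,\hat\mu)$; since $\hat y$ is feasible ($A\hat y+Bx^t=b$, $g(\hat y,\cdot)\leq 0$, $\hat\mu\geq0$) one has $L_{x^t,\xi^t}(\hat y,\hat\lambda,\hat\mu)\leq f_2(\hat y,x^t,\xi^t)\leq\mathfrak{Q}(x^t,\xi^t)+\varepsilon_t$, while dual $\varepsilon_t$-optimality gives $\theta_{x^t,\xi^t}(\hat\lambda,\hat\mu)\geq\mathfrak{Q}(x^t,\xi^t)-\varepsilon_t$, whence $\ell_2\leq2\varepsilon_t$ and $\eta_t\leq3\varepsilon_t$ surely. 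Adding the subgradient inequality for the convex $f_1$ and taking $\mathbb{E}[\,\cdot\mid\mathcal{F}_t]$ of the display (using (iii) and a measurable selection for measurability, and the uniform bound below for integrability) shows that $g_t$ is an $\mathbb{E}[\eta_t\mid\mathcal{F}_t]$-subgradient of $f$ at $x^t$, with $\mathbb{E}[\eta_t\mid\mathcal{F}_t]\leq3\varepsilon_t$.

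The technical crux — and the step I expect to be the main obstacle — is a uniform bound $\mathbb{E}[\|G_t\|_*^2]\leq M^2$. The term $s_{f_1}(x^t)$ is bounded by (ii); the gradients $\nabla_{x_1}f_2$ and $\nabla_{x_1}g_i$ are continuous on the compact set $\mathcal{X}_2\times X_1\times\Xi$ by (iv) and compactness of $\Xi$, hence bounded, and $\|B^T\|$ is bounded on $\Xi$. What remains is a bound on the dual iterates $(\lambda^{N,t},\mu^{N,t})$ that is \emph{uniform} over $x_1\in X_1$ and $\xi_2\in\Xi$. Boundedness of $\|\mu^{N,t}\|_1$ follows from the uniform Slater margin $\kappa$ in (v) by the standard estimate (Remark 2.3.3, p.313 in \cite{hhlem}), bounding $\|\mu\|_1$ by $(f_2(x_2,x_1,\xi_2)-\mathcal{L})/\kappa$ at the strictly feasible $x_2$ of (v), uniformly since $f_2$ is bounded on the compact data set and the value is bounded below. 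Boundedness of the equality multiplier $\lambda^{N,t}$ is more delicate and is precisely where the relative-interior part of (v) (the ball condition $\mathbb{B}(x_2,r)\cap\mbox{Aff}(\mathcal{X}_2)\neq\emptyset$, together with the parametrization $\lambda=Ax_2+Bx_1-b$ in \eqref{dualproblemsecondstage}) is used to confine the near-optimal dual solutions to a bounded set independent of the data; continuity and compactness then deliver a single constant $M$ valid for all $t,N$.

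With the oracle and the bound in hand the argument is routine. The defining property of the prox-mapping \eqref{defprox} and the strong convexity of $\omega$ (constant $\mu(\omega)$) give, for an optimal $x_{1*}$ of \eqref{defpb4},
\begin{equation*}
\gamma_t\langle G_t,\,x^t-x_{1*}\rangle\ \leq\ V_{x^t}(x_{1*})-V_{x^{t+1}}(x_{1*})+\frac{\gamma_t^2}{2\mu(\omega)}\|G_t\|_*^2 .
\end{equation*}
Splitting $\langle G_t,x^t-x_{1*}\rangle=\langle g_t,x^t-x_{1*}\rangle+\langle G_t-g_t,x^t-x_{1*}\rangle$, using $\langle g_t,x^t-x_{1*}\rangle\geq f(x^t)-f_{1*}-\mathbb{E}[\eta_t\mid\mathcal{F}_t]$, summing over $t$, and taking expectations, the martingale term $\mathbb{E}[\langle G_t-g_t,x^t-x_{1*}\rangle]=0$ drops out, and $V_{x^1}(x_{1*})=V_{x_{1\omega}}(x_{1*})\leq\tfrac12 D_{\omega,X_1}^2$, $V_{x^{N}}(x_{1*})\geq0$ yield
\begin{equation*}
\sum_t\gamma_t\big(\mathbb{E}[f(x^t)]-f_{1*}\big)\ \leq\ \tfrac12 D_{\omega,X_1}^2+\frac{M^2}{2\mu(\omega)}\sum_t\gamma_t^2+3\sum_t\gamma_t\varepsilon_t .
\end{equation*}
Dividing by $\Gamma_N$, with $\gamma_t=\theta/\sqrt{N}$ the first two terms are $O(1/\sqrt{N})$ and the last equals $\tfrac{3}{N-1}\sum_t\varepsilon_t\to0$ by Ces\`aro (since $\varepsilon_t\to0$). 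Convexity of $f$ and Jensen bound $\mathbb{E}[f(x_1(N))]$ above by $\Gamma_N^{-1}\sum_t\gamma_t\mathbb{E}[f(x^t)]\to f_{1*}$, while $f\geq f_{1*}$ on $X_1$ gives the matching lower bound; hence $\mathbb{E}[f(x_1(N))]\to f_{1*}$. Finally, conditioning on $\mathcal{F}_\tau$, primal $\varepsilon_\tau$-optimality gives $\mathcal{Q}(x^\tau)\leq\mathbb{E}[f_2(x_2^{N,\tau},x^\tau,\xi^\tau)\mid\mathcal{F}_\tau]\leq\mathcal{Q}(x^\tau)+\varepsilon_\tau$, so $\mathbb{E}[\hat f_N]$ differs from $\Gamma_N^{-1}\sum_\tau\gamma_\tau\mathbb{E}[f(x^\tau)]$ by at most $\Gamma_N^{-1}\sum_\tau\gamma_\tau\varepsilon_\tau\to0$, giving $\mathbb{E}[\hat f_N]\to f_{1*}$ as well.
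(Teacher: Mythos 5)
Your proof is correct and follows the same overall architecture as the paper's (prox-recursion for a biased stochastic oracle, inexact cuts from Proposition \ref{varprop1}, Ces\`aro averaging, and the final comparison of $\hat f_N$ with $\Gamma_N^{-1}\sum_\tau\gamma_\tau\mathbb{E}[f(x_1^\tau)]$ via primal $\varepsilon_\tau$-optimality), but it handles the key technical step in a genuinely different and arguably better way. The paper establishes only the qualitative statement $\eta_{\xi_2^{\tau}}(\varepsilon_\tau , x_1^\tau ) \to 0$ almost surely, and does so by contradiction: it extracts convergent subsequences of $(x_2^{\sigma(\tau)}, x_1^{\sigma(\tau)}, \lambda^{\sigma(\tau)}, \mu^{\sigma(\tau)}, \xi_2^{\sigma(\tau)})$ using compactness of $\Xi$ and boundedness of the dual iterates, passes to the limit in the weak-duality chain, and contradicts the first-order optimality conditions of the limit point. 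You instead bound the cut error deterministically: convexity of $x_2\mapsto L_{x_1^\tau,\xi_2^\tau}(x_2,\lambda^\tau,\mu^\tau)$ gives $\ell_2 \leq L_{x_1^\tau,\xi_2^\tau}(x_2^\tau,\lambda^\tau,\mu^\tau)-\theta_{x_1^\tau,\xi_2^\tau}(\lambda^\tau,\mu^\tau)$, primal feasibility of $x_2^\tau$ kills the multiplier terms so the first quantity is at most $\mathfrak{Q}(x_1^\tau,\xi_2^\tau)+\varepsilon_\tau$, and dual $\varepsilon_\tau$-optimality (with zero duality gap under (v)) bounds the second from below by $\mathfrak{Q}(x_1^\tau,\xi_2^\tau)-\varepsilon_\tau$, whence $\ell_2\leq 2\varepsilon_\tau$ surely. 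This is a valid chain of inequalities, it shortcuts the entire subsequence argument, and it is strictly more informative: it yields an explicit $O\bigl(\Gamma_N^{-1}\sum_\tau\gamma_\tau\varepsilon_\tau\bigr)$ contribution of the inexactness to the optimality gap, without invoking compactness of $\Xi$ or boundedness of the multipliers for this particular step. The remaining ingredient — the uniform bound on $\mathbb{E}[\|G(x_1^\tau,\xi_2^\tau,\varepsilon_\tau)\|_*^2]$ via boundedness of $(\lambda^\tau,\mu^\tau)$ under Assumption (v) — is treated in your write-up at the same level of detail as in the paper, which simply cites Proposition 3.1 of \cite{guigues2016isddp}; no gap there relative to the original.
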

\begin{proof} 
For fixed $N$, to alleviate notation, we denote vectors $x_1^{N,t}, x_{2}^{N,t}, \xi_2^{N,t}, 
A^{N,t}, B^{N,t}, b^{N,t}, \gamma_t(N), \lambda^{N,t}, \mu^{N,t}$
 used to compute $x_1(N)$ and $\hat f_N$ by  $x_1^{t}, x_{2}^{t}, \xi_2^{t}, 
A^{t}, B^{t}, b^{t}, \gamma_t, \lambda^{t}, \mu^{t}$, respectively.
Let $x_1^*$ be an optimal solution of \eqref{defpb4}. 
Standard computations on the proximal mapping give
\begin{equation}\label{proxmapgen}
\sum_{\tau=1}^N \gamma_\tau  G(x_1^\tau , \xi_2^\tau , \varepsilon_\tau)^T (x_1^\tau - x_1^*) \leq 
{1\over 2} D_{\omega, X_1}^2+\frac{1}{2 \mu( \omega )}\sum_{\tau=1}^N\gamma_\tau^2\|G(x_1^\tau , \xi_2^\tau , \varepsilon_\tau)\|_*^2.
\end{equation}
\if{
Setting $f_{\varepsilon}'(x)= \mathbb{E}_{\xi}[G(x,\xi,\varepsilon)]$ and
$\Delta_\tau=G(x_1^\tau, \xi_2^\tau, \varepsilon_\tau) - f_{\varepsilon_\tau}'( x_1^\tau )$ the above inequality can be re-written
\begin{equation}\label{projectionfirst} 
\sum_{\tau=1}^N \gamma_{\tau} 
  f_{\varepsilon_\tau}'( x_1^\tau )^T ( x_1^\tau - x_1^* )
\leq \frac{1}{2} D_{\omega, X_1}^2 + \frac{1}{2 \mu( \omega ) } \sum_{\tau=1}^N  \gamma_{\tau}^2 \|G(x_1^{\tau}, \xi_2^{\tau}, \varepsilon_{\tau})\|_*^2
+ \sum_{\tau=1}^N \gamma_{\tau} 
\Delta_{\tau}^T ( x_1^* - x_1^{\tau}).
\end{equation}
Setting $\xi_2^{1:\tau-1}=(\xi_2^1,\ldots,\xi_2^{\tau-1})$, 
observe that 
\begin{equation}\label{deltaeq} 
\begin{array}{lll}
\mathbb{E}[\langle \Delta_\tau , x_1^*  - x_1^\tau \rangle ] & = & 
\mathbb{E}[ \mathbb{E}[ \langle \Delta_\tau , x_1^*  - x_1^\tau \rangle |   \xi_2^{1:\tau-1}   ] ] = 
\mathbb{E}[  \langle \mathbb{E} [ \Delta_\tau | \xi_2^{1:\tau-1} ] , x_1^*  - x_1^\tau \rangle  ]\\
& = & \mathbb{E}[  \langle \mathbb{E} [ G(x_1^\tau, \xi_2^\tau, \varepsilon_\tau) - f_{\varepsilon_\tau}'( x_1^\tau ) | \xi_2^{1:\tau-1} ] , x_1^*  - x_1^\tau \rangle  ]=0.
\end{array}
\end{equation}
}\fi
Next using Proposition \ref{varprop1} we have 
\begin{equation}\label{ineqqx1tau} 
\mathfrak{Q}(x_1^* , \xi_2^\tau ) 
\geq \mathfrak{Q}(x_1^\tau , \xi_2^\tau ) - \eta_{\xi_2^{\tau}}(\varepsilon_\tau , x_1^\tau  ) 
+ \langle H(x_1^{\tau}, \xi_2^{\tau}, \varepsilon_{\tau} ) , x_1^* - x_1^{\tau} \rangle
\end{equation}
where 
\begin{equation}\label{defeta1}
\begin{array}{lll}
\eta_{\xi_2^{\tau}}(\varepsilon_\tau , x_1^\tau  )& =& 
\left\{ 
\begin{array}{l}
\max \;\langle \nabla_{x_2} L_{x_1^\tau , \xi_2^{\tau}}( x_2^\tau , \lambda^{\tau}, \mu^{\tau} ) , x_2^{\tau} - x_2 \rangle \\
x_2 \in \mathcal{X}_2
\end{array}
\right. \\
& = &  
\left\{ 
\begin{array}{l}
\max \;\langle \nabla_{x_2} f_2(x_2^\tau , x_1^\tau , \xi_2^\tau ) + (A^\tau)^T  \lambda^{\tau} + \sum_{i=1}^p \mu_i^{\tau} \nabla_{x_2} g_i(x_2^\tau , x_1^{\tau}, \xi_2^{\tau} )     , x_2^{\tau} - x_2 \rangle  \\
x_2 \in \mathcal{X}_2.
\end{array}
\right.
\end{array}
\end{equation}
Setting $\xi_2^{1:\tau-1}=(\xi_2^1,\ldots,\xi_2^{\tau-1})$ and
taking the conditional expectation $\mathbb{E}_{\xi_2^\tau}[\cdot|\xi_2^{1:\tau-1}]$ on each side of \eqref{ineqqx1tau}  we obtain almost surely
\begin{equation}\label{subgradientf}
\mathcal{Q}(x_1^* ) \geq \mathcal{Q}( x_1^\tau ) -  \mathbb{E}_{\xi_2^\tau}[ \eta_{\xi_2^{\tau}}(\varepsilon_\tau , x_1^\tau  ) |\xi_2^{1:\tau-1}]
+ ( \mathbb{E}_{\xi_2^\tau}[H(x_1^\tau , \xi_2^\tau ,\varepsilon_\tau )   | \xi_2^{1:\tau-1} ]   )^T  ( x_1^* - x_1^{\tau} ).
\end{equation}
Combining \eqref{proxmapgen}, \eqref{subgradientf}, and using the convexity of $f$ we get
\begin{equation}\label{ismdrn1}
\begin{array}{l}
0 \leq \mathbb{E}[ f(x_1(N))-f(x_1^*) ]   \leq   \displaystyle \frac{1}{\Gamma_N} \sum_{\tau=1}^N \gamma_\tau \mathbb{E}[ f( x_1^\tau )    -   f ( x_1^* ) ] \\
 \leq  \displaystyle \frac{1}{\Gamma_N} \sum_{\tau=1}^N \gamma_\tau \mathbb{E}[  \eta_{\xi_2^{\tau}}(\varepsilon_\tau , x_1^\tau  )  ]
+ \frac{1}{2 \Gamma_N}\Big[ D_{\omega, X_1}^2 +  \frac{1}{\mu(\omega)} \sum_{\tau=1}^N  \gamma_{\tau}^2 \mathbb{E}[ \|G(x_1^{\tau}, \xi_2^{\tau}, \varepsilon_{\tau})\|_*^2 ]  \Big].
 \end{array}
\end{equation}
We now show by contradiction that\footnote{The proof is similar to the proof of Proposition 4.6 in \cite{guigues2016isddp}.} 
\begin{equation}\label{etaconv0}
\lim_{\tau \rightarrow +\infty} \eta_{\xi_2^{\tau}}(\varepsilon_\tau , x_1^\tau  ) = 0 \mbox{ almost surely.}
\end{equation}
Take an arbitrary realization of ISMD. We want to show that 
\begin{equation}\label{etaconv}
\lim_{\tau \rightarrow +\infty} \eta_{\xi_2^{\tau}}(\varepsilon_\tau , x_1^\tau  ) = 0 
\end{equation}
for that realization.
Assume that \eqref{etaconv} does not hold. 
Let $x_{2 *}^t$ (resp. ${\tilde x}_{2}^{\tau}$) be an optimal solution of \eqref{primalproblem} (resp. \eqref{defeta1}).
Then there is $\varepsilon_0>0$ and $\sigma_1: \mathbb{N} \rightarrow \mathbb{N}$
increasing such that for every $\tau \in \mathbb{N}$, we have 
\begin{equation}\label{etaconv2}
 \langle \nabla_{x_2} 
 f_2({x}_{2}^{\sigma_1(\tau)} , x_1^{\sigma_1(\tau)} , 
 \xi_2^{\sigma_1(\tau)} ) + (A^{\sigma_1(\tau)})^T  \lambda^{\sigma_1(\tau)} + 
 \sum_{i=1}^p \mu_i^{\sigma_1(\tau)} \nabla_{x_2} g_i( x_{2}^{\sigma_1( \tau )}, x_1^{\sigma_1 ( \tau ) }, \xi_2^{\sigma_1 (  \tau ) } )     , x_2^{\sigma_1 ( \tau ) } -  {\tilde x}_{2}^{ \sigma_1 ( \tau )} \rangle
 \geq \varepsilon_0.
\end{equation}
By  $\varepsilon_t$-optimality of $x_{2}^t$ we obtain
\begin{equation}\label{optx2*t}
f_2(x_{2 *}^t , x_1^t ,\xi_2^t) \leq  
f_2(x_{2}^t , x_1^t ,\xi_2^t) \leq f_2(x_{2 *}^t , x_1^t ,\xi_2^t) + \varepsilon_t.
\end{equation}
Using Assumptions (i), (iii), (iv), and Proposition 3.1 in \cite{guigues2016isddp} we get that 
the sequence $(\lambda^{\tau}, \mu^{\tau})_{\tau}$ is almost surely bounded.
Let $\mathcal{D}$
be a compact set to which  this sequence belongs.
By compacity, we can find $\sigma_2:\mathbb{N} \rightarrow \mathbb{N}$ increasing such that
setting $\sigma= \sigma_1 \circ \sigma_2$ the sequence
$(x_2^{\sigma(\tau)}, x_1^{\sigma(\tau)}, \lambda^{\sigma(\tau)}, \mu^{\sigma(\tau)}, \xi_2^{\sigma(\tau)}  )$ converges 
to some $(\bar x_2, x_{1 *}, \lambda_*, \mu_*, \xi_{2 *})
\in \mathcal{X}_2 \times X_1 \times \mathcal{D} \times \Xi$.
We will denote by $A_*, B_*, b_*$ the values of $A, B,$ and $b$ in $\xi_{2 *}$.
By continuity arguments there is $\tau_0 \in \mathbb{N}$ such that for every $\tau \geq \tau_0$:
\begin{equation}\label{contradeta23}
\begin{array}{l}
\left|\langle \nabla_{x_2} 
 f_2(x_{2}^{\sigma(\tau)} , x_1^{\sigma(\tau)} , 
 \xi_2^{\sigma(\tau)} ) + (A^{\sigma(\tau)})^T  \lambda^{\sigma(\tau)} + 
 \sum_{i=1}^p \mu_i^{\sigma(\tau)} \nabla_{x_2} g_i({x}_{2}^{\sigma( \tau )}, x_1^{\sigma ( \tau ) }, \xi_2^{\sigma (  \tau ) } )     , x_2^{\sigma ( \tau ) } -  {\tilde x}_{2}^{ \sigma ( \tau )} \rangle \right.\\
- \left. \langle \nabla_{x_2} 
 f_2(\bar x_2 , x_{1 *}, 
 \xi_{2 *}) + A_*^T  \lambda_*  + 
 \sum_{i=1}^p \mu_*(i) \nabla_{x_2} g_i(\bar x_2, x_{1 *}, \xi_{2 *}  )   , \bar x_2 -  {\tilde x}_{2}^{ \sigma ( \tau )} \rangle \right|
 \leq \varepsilon_{0}/2.
\end{array}
\end{equation}
We deduce from \eqref{etaconv2} and \eqref{contradeta23} that for all $\tau \geq \tau_0$
\begin{equation}\label{caseBposcontrad}
\left \langle \nabla_{x_2} 
 f_2(\bar x_2 , x_{1 *}, 
 \xi_{2 *}) + A_*^T  \lambda_*  + 
 \sum_{i=1}^p \mu_*(i) \nabla_{x_2} g_i(\bar x_2, x_{1 *}, \xi_{2 *} )    , \bar x_2 -  {\tilde x}_{2}^{ \sigma ( \tau )} \right  \rangle \geq \varepsilon_0/2 >0.
\end{equation}
Assumptions (i)-(iv) imply that
primal problem \eqref{primalproblem} and dual problem \eqref{dualpbmodif} have the same
optimal value and for every $x_2 \in \mathcal{X}_2$ and $\tau \geq \tau_0$ we have:
{\small{
$$
\begin{array}{l}
f_2(x_2^{\sigma(\tau)},x_1^{\sigma(\tau)},\xi_2^{\sigma(\tau)}) + \langle A^{\sigma(\tau)} x_2^{\sigma(\tau)} + B^{\sigma(\tau)} x_1^{\sigma(\tau)} - b^{\sigma(\tau)} ,   \lambda^{\sigma(\tau)} \rangle + \langle \mu^{\sigma(\tau)} ,  g( x_2^{\sigma(\tau)},x_1^{\sigma(\tau)},\xi_2^{\sigma(\tau)}) \rangle  \\
\leq  f_2(x_{2 *}^{\sigma(\tau)},x_1^{\sigma(\tau)},\xi_2^{\sigma(\tau)}) + \varepsilon_{\sigma(\tau)} \;\;\mbox{by definition of }x_{2 *}^{\sigma(\tau)}, x_2^{\sigma(\tau)}\mbox{ and since }\mu^{\sigma(\tau)} \geq 0, x_2^{\sigma(\tau)} \in X_2( x_1^{\sigma(\tau)  } , \xi_2^{\sigma(\tau)  } ),\\
\leq \theta_{x_1^{\sigma(\tau)}, \xi_2^{\sigma(\tau)}}  (\lambda^{\sigma(\tau)} , \mu^{\sigma(\tau)} ) + 2 \varepsilon_{\sigma(\tau)}, [(\lambda^{\sigma(\tau)} , \mu^{\sigma(\tau)})\mbox{ is an }\epsilon_{\sigma(\tau)}\mbox{-optimal dual solution and there is no duality gap}],\\
\leq f_2(x_2, x_1^{{\sigma(\tau)}}, \xi_2^{\sigma(\tau)} )  + \langle A^{\sigma(\tau)} x_2 + B^{\sigma(\tau)} x_1^{\sigma(\tau)} - b^{\sigma(\tau)} , \lambda^{\sigma(\tau)} \rangle 
+ \langle \mu^{\sigma(\tau)} ,  g( x_2 , x_1^{\sigma(\tau)} , \xi_2^{\sigma(\tau)} ) \rangle  +   2 \varepsilon_{\sigma(\tau)}
\end{array}
$$
}}
where in the last relation we have used the definition of $\theta_{x_1^{\sigma(\tau)}, \xi_2^{\sigma(\tau)}}$.
Taking the limit in the above relation as $\tau \rightarrow +\infty$, we get  for every $x_2 \in \mathcal{X}_2$:
$$
\begin{array}{l}
f_2(\bar x_2, x_{1 *}, \xi_{2 *} )  + \langle A_* \bar x_2 + B_* x_{1 *} - b_* , \lambda_{*} \rangle + 
\langle \mu_{*} , g(\bar x_2 , x_{1 *}, \xi_{2 *} ) \rangle  \\
\leq  f_2(x_{2}, x_{1 *}, \xi_{2 *} )  + \langle A_* x_{2} + B_* x_{1 *} - b_* , \lambda_{*} \rangle + 
\langle \mu_{*} , g(x_{2} , x_{1 *}, \xi_{2 *} ) \rangle.
\end{array}
$$
Recalling that $\bar x_{2} \in \mathcal{X}_2$ this shows that $\bar x_{2}$ is an optimal solution of
\begin{equation}
\left\{
\begin{array}{l}
\min  f_2(x_{2}, x_{1 *}, \xi_{2 *} )  + \langle A_* x_{2} + B_* x_{1 *} - b_* , \lambda_{*} \rangle +  \langle \mu_{*} , g(x_{2} , x_{1 *}, \xi_{2 *} ) \rangle\\
x_2 \in \mathcal{X}_2.
\end{array}
\right.
\end{equation}
The first order optimality conditions for $\bar x_{2}$ can be written
\begin{equation}\label{caseBposcontradfinal}
\left \langle \nabla_{x_2} f_2( \bar x_{2}, x_{1 *}, \xi_{2 *} )  + A_*^T \lambda_{*} + \sum_{i=1}^p \mu_{*} ( i) \nabla_{x_2} g_{i}( \bar x_{2} , x_{1 *} , \xi_{2 *}), x_2  - \bar x_{2}  \right  \rangle \geq 0
\end{equation}
for all $x_2 \in \mathcal{X}_2$. Specializing the above relation for $x_2 = {\tilde x}_2^{ \sigma(\tau_0 ) } \in \mathcal{X}_2$, we get 
$$
\left \langle \nabla_{x_2} f_2( \bar x_{2}, x_{1 *}, \xi_{2 *} )  + A_*^T \lambda_{*} + \sum_{i=1}^p \mu_{*} ( i) \nabla_{x_2} g_{i}( \bar x_{2} , x_{1 *} , \xi_{2 *}),  {\tilde x}_2^{ \sigma(\tau_0 ) }  - \bar x_{2}  \right  \rangle \geq 0,
$$
but the left-hand side of the above inequality is $\leq -\varepsilon_0/2<0$ due to \eqref{caseBposcontrad} which yields the desired contradiction.
Therefore we have shown \eqref{etaconv0} and since the sequence $\eta_{\xi_2^{\tau}}(\varepsilon_\tau , x_1^\tau  )$ is almost surely bounded,
this implies $\lim_{\tau \rightarrow +\infty} \mathbb{E}[ \eta_{\xi_2^{\tau}}(\varepsilon_\tau , x_1^\tau  )]=0$
and consequently
$ \lim_{N \rightarrow +\infty}\frac{1}{\Gamma_N} \sum_{\tau=1}^N \gamma_\tau \mathbb{E}[ \eta_{\xi_2^{\tau}}(\varepsilon_\tau , x_1^\tau  )]=0$.
Using the boundedness of the sequence $(\lambda^t, \mu^t)$ and
Assumption (ii) we get that
$\|G(x_1^{\tau}, \xi_2^{\tau}, \varepsilon_{\tau})\|_*^2$ is almost surely bounded.
Combining these observations with relation \eqref{ismdrn1} and using the definition of $\gamma_t$ we have $\lim_{N \rightarrow +\infty} \mathbb{E}[ f(x_1(N)) ] = f_{1 *} $.
Finally, recalling relation \eqref{ismdrn1}, to show $\lim_{N \rightarrow +\infty} \mathbb{E}[ {\hat f}_N ] = f_{1 *}$ all we have to show is 
\begin{equation}\label{f2qzero}
\lim_{N \rightarrow +\infty} \frac{1}{\Gamma_N}
\sum_{\tau=1}^N \gamma_{\tau} \mathbb{E}[ \mathcal{Q}( x_1^\tau ) - f_2(x_2^\tau , x_1^\tau , \xi_2^\tau )  ]=0.
\end{equation}
The above relation immediately follows from
\begin{equation}\label{immfollowsendfirt}
\mathbb{E}[\mathcal{Q}(x_1^\tau ) ] 
=\mathbb{E}_{\xi_2^{1:\tau-1}}[\mathcal{Q}(x_1^\tau ) ]
= \mathbb{E}_{\xi_2^{1:\tau-1}}[ \mathbb{E}_{\xi_2^\tau}[ \mathfrak{Q}(x_1^\tau , \xi_2^\tau )  | \xi_2^{1:\tau-1} ]    ]
\leq \mathbb{E}_{\xi_2^{1:\tau}}[ f_2(x_2^\tau , x_1^\tau, \xi_2^\tau ) ] \leq \mathbb{E}[\mathcal{Q}(x_1^\tau ) ]  + \varepsilon_\tau
\end{equation}
which holds since $\mathfrak{Q}(x_1^\tau , \xi_2^\tau ) \leq f_2(x_2^\tau , x_1^\tau, \xi_2^\tau ) \leq \mathfrak{Q}(x_1^\tau , \xi_2^\tau )+ \varepsilon_\tau$ by definition of $x_2^\tau$.\hfill
\end{proof}
\begin{rem} Output $\hat f_N$ of ISMD is a computable approximation of the optimal value $f_{1 *}$
of optimization problem \eqref{defpb4}.
\end{rem}

\begin{thm}\label{ismdrn2}[Convergence rate for ISMD]  Consider problem \eqref{defpb4} and assume that 
Assumptions (i)-(iv) of Theorem \ref{ismdrn1prop} are satisfied.
We alse make the following assumptions:
\begin{itemize}
\item[(a)] $\exists \alpha>0$ such that for every $\xi_2 \in \Xi$, for every $x_1 \in X_1$,
for every $y_1, y_2 \in \mathcal{X}_2$ we have
$$
f_2 (y_2 , x_1,  \xi_2 ) \geq f_2 (y_1 , x_1, \xi_2 ) + (y_2 - y_1 )^T \nabla_{x_2} f_2(y_1 , x_1 , \xi_2 )   + \frac{\alpha}{2} \|y_2- y_1\|_2^2;
$$
\item[(b)] there is $0 < M_1 < +\infty$ such that for every $\xi_2 \in \Xi$, for every $x_1 \in X_1$,
for every $y_1, y_2 \in \mathcal{X}_2$ we have
$$
\|\nabla_{x_1} f_2(y_2, x_1, \xi_2) -  \nabla_{x_1} f_2(y_1, x_1, \xi_2)  \|_2  \leq  M_1 \|y_2   - y_1\|_2;
$$
\item[(c)] there is $0 < M_2 < +\infty$ such that for every $\xi_2 \in \Xi$, for every $x_1 \in X_1$, for every $i=1,\ldots,p$,
for every $y_1, y_2 \in \mathcal{X}_2$, we have
$$
\|\nabla_{x_1} g_i(y_2, x_1, \xi_2) -  \nabla_{x_1} g_i (y_1, x_1, \xi_2)  \|_2  \leq  M_2 \|y_2   - y_1\|_2;
$$
\item[(d)] $\exists \alpha_D>0$ such that for every $x_1 \in X_1$, for every $\xi_2 \in \Xi$, dual function $\theta_{x_1, \xi_2}$
given by \eqref{defdualfunctionsstage} is strongly concave on $D_{x_1, \xi_2}$ with constant of strong concavity $\alpha_D$
where $D_{x_1, \xi_2}$ is a set containing the set of solutions of  second stage dual problem 
\eqref{dualproblemsecondstage} such that $(\lambda^t, \mu^t) \in D_{x_1^t , \xi_2^t }$.
\item[(e)] There are functions  $G_0, M_0$ such that for every $x_1 \in X_1$, for every $x_2 \in \mathcal{X}_2$
we have $$
\begin{array}{l}
\max(\|B^T\| , \sqrt{p} \max_{i=1,\ldots,p} \|\nabla_{x_1} g_i(x_2, x_1, \xi_2)\|_2 )  \leq G_0(\xi_2) \mbox{ and }\|\nabla_{x_1} f_2(x_2, x_1, \xi_2)\|_2 \leq M_0(\xi_2)
\end{array}
$$
with $\mathbb{E}[G_0(\xi_2)]$ and $\mathbb{E}[M_0(\xi_2)]$ finite;
\item[(f)] There are functions $\overline{f}_2, \underline{f}_2$ such that for all $x_1 \in X_1, x_2 \in \mathcal{X}_2$ we have
$$
\underline{f}_2(  \xi_2 ) \leq  f_2(x_2, x_1, \xi_2) \leq \overline{f}_2(\xi_2)
$$
with $\mathbb{E}[\overline{f}_2(  \xi_2 )]$ and $\mathbb{E}[\underline{f}_2 (  \xi_2 ) ]$ finite.
\item[(g)] There exists $0<L(f_2)<+\infty$ such that for every $\xi_2 \in \Xi$, for every $x_1 \in X_1$, function $f_2(\cdot,x_1, \xi_2)$ is 
Lipschitz continuous with Lipschitz constant $L(f_2)$.
\end{itemize}
Let $\mathcal{A}$ be a compact set such that matrix $A$ in $\xi_2$ almost surely belongs to $\mathcal{A}$
and let $M_3<+\infty$ such that $\|s_{f_1}( x_1 ) \|_2 \leq M_3$ for all $x_1 \in X_1$.
Let $V_{\mathcal{X}_2}$ be the vector space 
$V_{\mathcal{X}_2} = \{x-y : x,y \in \emph{Aff}(\mathcal{X}_2 )\}$.
Define the functions $\rho$ and $\rho_*$ by
$$
\begin{array}{ll} 
\rho(A,z)=\left\{ 
\begin{array}{l}
\max \; t \|z\| \\
t \geq 0,\;tz \in A(\mathbb{B}(0,r) \cap V_{\mathcal{X}_2}),
\end{array}
\right.
&
\rho_* (A)=\left\{ 
\begin{array}{l}
\min \; \rho(A,z) \\
\|z\|=1,\;z \in A V_{\mathcal{X}_2}.
\end{array}
\right.
\end{array}
$$
Assume that $\gamma_t=\frac{\theta_1}{\sqrt{N}}$ and $\varepsilon_t=\frac{\theta_2}{t^2}$
for some $\theta_1, \theta_2 >0$.
Let
$$
\left\{ 
\begin{array}{rll}
\mathcal{U}_1&=&( \mathbb{E}[\overline{f}_2( \xi_2 ) ] - \mathbb{E}[\underline{f}_2( \xi_2 )]) / \kappa,\\
\mathcal{U}_2( r , \xi_2 )&=& \frac{\overline{f}_2( \xi_2 )  - \underline{f}_2( \xi_2 ) + \theta_2 + L(f_2) r}{\min(\rho_* , \kappa/2)} \emph{ with }\displaystyle \rho_* = \min_{A \in \mathcal{A}} \rho_{*}( A ),\\
{\overline{\mathcal{U}}}&=& \Big( (M_1 +  M_2 \mathcal{U}_1 ) \sqrt{\frac{2}{\alpha}} + \frac{2 \mathbb{E}[G_0(\xi_2)]  }{\sqrt{\alpha_D }} \Big) \emph{Diam}(\mathcal{X}_2),\\
M_*(r)& =& \sqrt{  \mathbb{E}( M_3 + M_0( \xi_2 )  + \sqrt{2} \mathcal{U}_2(r , \xi_2 )G_0(\xi_2))^2 }.
\end{array}
\right.
$$
Let ${\hat f}_N$ computed by ISMD.
Then there is  $r_0>0$ such that 
\begin{equation}\label{ismd2ineq}
f_{1 *} \leq \mathbb{E}[ {\hat f}_N ]  \leq f_{1 *} + \frac{2\theta_2 +{\overline{\mathcal{U}}} \sqrt{\theta_2}}{N} + \frac{{\overline{\mathcal{U}}} \sqrt{\theta_2} \ln(N)}{N} + 
\frac{\frac{D_{\omega, X_1}^2}{\theta_1} + \frac{\theta_1 M_*^2(r_0) }{\mu( \omega ) }   }{2\sqrt{N}}
\end{equation}
where $f_{1 *}$ is the optimal value of \eqref{defpb4}.
\end{thm}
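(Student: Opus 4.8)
The plan is to reuse the skeleton of the proof of Theorem~\ref{ismdrn1prop} but to replace the purely qualitative control of the error terms $\eta_{\xi_2^\tau}$ by the quantitative $\sqrt{\varepsilon}$-bound of Proposition~\ref{defcutctk} (equivalently Corollary~\ref{corinexactcuts}), which is available precisely because of the strong convexity of $f_2$ in (a) and the strong concavity of the dual function in (d). First I would apply Proposition~\ref{defcutctk} to the value function $\mathfrak{Q}(\cdot,\xi_2^\tau)$ (with $X=X_1$, $Y=\mathcal{X}_2$, and the approximate primal--dual triple $(x_2^\tau,\lambda^\tau,\mu^\tau)$) to obtain, evaluated at $x_1^*$,
\[
\mathfrak{Q}(x_1^*,\xi_2^\tau)\ \geq\ f_2(x_2^\tau,x_1^\tau,\xi_2^\tau)-\eta_{\xi_2^\tau}(\varepsilon_\tau,x_1^\tau)+\langle H(x_1^\tau,\xi_2^\tau,\varepsilon_\tau),x_1^*-x_1^\tau\rangle.
\]
The decisive point is that this cut is anchored at $f_2(x_2^\tau,x_1^\tau,\xi_2^\tau)$, exactly the quantity averaged in $\hat f_N$ (see~\eqref{outputsrsa}); adding $f_1(x_1^\tau)$, bounding $f_1(x_1^\tau)-f_1(x_1^*)\leq\langle s_{f_1}(x_1^\tau),x_1^\tau-x_1^*\rangle$, recalling $G=s_{f_1}+H$ from~\eqref{defGH}, and taking $\mathbb{E}_{\xi_2^\tau}[\cdot\,|\,\xi_2^{1:\tau-1}]$ gives, almost surely,
\[
\mathbb{E}_{\xi_2^\tau}\!\big[f_1(x_1^\tau)+f_2(x_2^\tau,x_1^\tau,\xi_2^\tau)\,\big|\,\xi_2^{1:\tau-1}\big]\ \leq\ f_{1*}+\mathbb{E}_{\xi_2^\tau}[\eta_{\xi_2^\tau}(\varepsilon_\tau,x_1^\tau)\,|\,\xi_2^{1:\tau-1}]+\langle \mathbb{E}_{\xi_2^\tau}[G\,|\,\xi_2^{1:\tau-1}],x_1^\tau-x_1^*\rangle.
\]
Summing against the weights $\gamma_\tau$, dividing by $\Gamma_N$, and using the proximal inequality~\eqref{proxmapgen} together with the tower property (exactly as in Theorem~\ref{ismdrn1prop}) to absorb the last term produces the master estimate
\[
f_{1*}\ \leq\ \mathbb{E}[\hat f_N]\ \leq\ f_{1*}+\frac{1}{\Gamma_N}\sum_{\tau=1}^N\gamma_\tau\,\mathbb{E}[\eta_{\xi_2^\tau}(\varepsilon_\tau,x_1^\tau)]+\frac{1}{2\Gamma_N}\Big[D_{\omega,X_1}^2+\frac{1}{\mu(\omega)}\sum_{\tau=1}^N\gamma_\tau^2\,\mathbb{E}[\|G\|_*^2]\Big],
\]
the lower bound following from $f_2(x_2^\tau,x_1^\tau,\xi_2^\tau)\geq\mathfrak{Q}(x_1^\tau,\xi_2^\tau)$ and $\mathbb{E}[\mathfrak{Q}(x_1^\tau,\xi_2^\tau)]=\mathbb{E}[\mathcal{Q}(x_1^\tau)]\geq f_{1*}$.

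Next I would bound the two remaining quantities uniformly. For the error terms, Proposition~\ref{defcutctk} and assumptions (a)--(f) give $\mathbb{E}[\eta_{\xi_2^\tau}(\varepsilon_\tau,x_1^\tau)]\leq\varepsilon_\tau+\overline{\mathcal U}\sqrt{\varepsilon_\tau}$: the constants $\alpha,\alpha_D$ are uniform by (a),(d), the Lipschitz constants $M_1,M_2$ by (b),(c), the factor $\max(\|B^T\|,\sqrt p\,U)$ is dominated by $G_0(\xi_2)$ by (e), and the multiplier factor $\mathcal{U}_{x_1}$ is dominated in expectation by $\mathcal{U}_1$ through the $\kappa$-Slater condition (v) and (f), exactly as in~\eqref{boundmult}. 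For the second moment of the stochastic subgradient I would bound $\|G\|\leq M_3+M_0(\xi_2)+\sqrt2\,\mathcal{U}_2(r_0,\xi_2)\,G_0(\xi_2)$ pointwise, whence $\mathbb{E}[\|G\|_*^2]\leq M_*^2(r_0)$.

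It then remains to insert the prescribed schedules $\gamma_\tau=\theta_1/\sqrt N$ (so $\Gamma_N=\theta_1\sqrt N$ and $\sum_\tau\gamma_\tau^2=\theta_1^2$) and $\varepsilon_\tau=\theta_2/\tau^2$ (so $\sqrt{\varepsilon_\tau}=\sqrt{\theta_2}/\tau$). The error sum becomes $\tfrac1N\sum_{\tau=1}^N(\varepsilon_\tau+\overline{\mathcal U}\sqrt{\varepsilon_\tau})$, which by $\sum_{\tau\geq1}\tau^{-2}<2$ and $\sum_{\tau=1}^N\tau^{-1}\leq 1+\ln N$ is at most $(2\theta_2+\overline{\mathcal U}\sqrt{\theta_2})/N+\overline{\mathcal U}\sqrt{\theta_2}\,\ln(N)/N$, while the proximal term equals $\big(D_{\omega,X_1}^2/\theta_1+\theta_1 M_*^2(r_0)/\mu(\omega)\big)/(2\sqrt N)$; collecting these reproduces~\eqref{ismd2ineq}.

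The hard part, as already in Theorem~\ref{ismdrn1prop}, is the uniform boundedness of the dual iterates $(\lambda^\tau,\mu^\tau)$ over all $x_1\in X_1$ and $\xi_2\in\Xi$, which underlies both $\mathbb{E}[\|G\|_*^2]\leq M_*^2(r_0)$ and the passage $\mathcal{U}_{x_1}\leq\mathcal{U}_1$. The inequality multipliers $\mu$ are controlled by the $\kappa$-Slater condition (v) as in~\eqref{boundmult}; the delicate point is the equality multiplier $\lambda$, for which I would use the conditioning quantities $\rho(A,\cdot)$ and $\rho_*(A)$ together with the existence of a radius $r_0>0$ for which $\mathbb{B}(x_2,r_0)\cap\mbox{Aff}(\mathcal{X}_2)$ is feasible (assumption (v)), yielding the bound $\mathcal{U}_2(r_0,\xi_2)$ on $\sqrt{\|\lambda\|^2+\|\mu\|^2}$. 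This is the analogue of Proposition~3.1 in~\cite{guigues2016isddp}, and checking that $r_0$ may be chosen independently of $\xi_2$ --- using compactness of $\Xi$ and of $\mathcal{A}$ --- is the step demanding the most care.
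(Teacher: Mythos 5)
Your proposal is correct and follows essentially the same route as the paper: apply Proposition \ref{defcutctk} to $\mathfrak{Q}(\cdot,\xi_2^\tau)$ at $x_1^\tau$, take conditional expectations, combine with the subgradient inequality for $f_1$ and the proximal estimate \eqref{proxmapgen}, bound the dual multipliers via the $\kappa$-Slater condition and Proposition 3.1 of \cite{guigues2016isddp} to get $M_*^2(r_0)$, and then evaluate the sums under the prescribed schedules; the lower bound is obtained exactly as in the paper from $f_2(x_2^\tau,x_1^\tau,\xi_2^\tau)\geq\mathfrak{Q}(x_1^\tau,\xi_2^\tau)$ (your phrasing ``$\mathbb{E}[\mathcal{Q}(x_1^\tau)]\geq f_{1*}$'' should of course include the $f_1(x_1^\tau)$ term, but your surrounding argument already does).
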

\begin{proof} Let $x_1^*$ be an optimal solution of \eqref{defpb4}.
Under our assumptions, we can apply  Proposition \ref{defcutctk} to value function $\mathfrak{Q}(\cdot,\xi_2^t)$ and $\bar x =  x_1^t$, which gives
\begin{equation}\label{firstrelconvth21}
\mathfrak{Q}( x_1^*  , \xi_2^t) \geq  f_2(  x_2^t, x_1^t ,   \xi_2^t ) +  \langle H(x_1^{t}, \xi_2^{t}, \varepsilon_{t} ) , x_1^* - x_1^{t} \rangle - \eta_{\xi_2^t}(\varepsilon_t, x_1^t   ),
\end{equation}
where 
$$
\begin{array}{lll}
\eta_{\xi_2^t}(\varepsilon_t, x_1^t   ) &= & \varepsilon_t + \Big( M_1 + 
\frac{M_2}{\kappa}(f_2(\bar x_2^t, x_1^t, \xi_2^t) - {\underline{f}}_2(\xi_2^t)  ) \Big) \sqrt{\frac{2 \varepsilon_t }{\alpha}}\mbox{Diam}(\mathcal{X}_2),\\
&&+ 2  \max\Big(  \|(B^t)^T \|, \sqrt{p}\, \max_{i=1,\ldots,p} \|\nabla_{x_1} g_i(x_2^t, x_1^t, \xi_2^t)\|_2  \Big) \mbox{Diam}(\mathcal{X}_2) \sqrt{ \frac{  \varepsilon_t } { \alpha_D }},
\end{array}
$$
for some $\bar x_2^t \in \mathcal{X}_2$ depending on $\xi_2^{1:t}$.
Taking the conditional expectation $\mathbb{E}_{\xi_2^t}[\cdot|\xi_2^{1:t-1}]$ in \eqref{firstrelconvth21} and using (e)-(f), we get 
\begin{equation}\label{convproofexpcase21}
\mathcal{Q}( x_1^*  ) \geq \mathbb{E}_{\xi_2^t}[ f_2(  x_2^t, x_1^t ,   \xi_2^t )|\xi_2^{1:t-1}] +  \mathbb{E}_{\xi_2^t}[\langle H(x_1^{t}, \xi_2^{t}, \varepsilon_{t} ) , x_1^* - x_1^{t} \rangle |\xi_2^{1:t-1}]
- (\varepsilon_t + {\overline{\mathcal{U}}} \sqrt{ \varepsilon_t }).
\end{equation}
Summing \eqref{convproofexpcase21} with the relation
$$
f_1(x_1^* ) \geq f_1(x_1^t ) + \langle s_{f_1}(x_1^t ) , x_1^* - x_1^t \rangle
$$
and taking the expectation operator $\mathbb{E}_{\xi_2^{1:t-1}}[\cdot]$ on each side of the resulting inequality gives
\begin{equation}\label{convproofexpcase22}
f(x_1^*) \geq \mathbb{E}[f_2(  x_2^t, x_1^t ,   \xi_2^t ) + f_1(x_1^t)] + \mathbb{E}[\langle G(x_1^{t}, \xi_2^{t}, \varepsilon_{t} ) , x_1^* - x_1^{t} \rangle ]-(\varepsilon_t + {\overline{\mathcal{U}}} \sqrt{ \varepsilon_t }).
\end{equation}
From \eqref{convproofexpcase22}, we deduce
\begin{equation}\label{smdifinal}
\mathbb{E}[\hat f_N - f_{1 *}  ]
\leq \frac{1}{\Gamma_N} \sum_{t=1}^N \gamma_t (\varepsilon_t + {\overline{\mathcal{U}}} \sqrt{\varepsilon_t } ) 
+ \frac{1}{\Gamma_N} \sum_{t=1}^N \gamma_t \mathbb{E}[ \langle   G(x_1^t , \xi_2^t , \varepsilon_t) ,  x_1^t - x_1^* \rangle   ]. 
\end{equation}
Using Proposition 3.1 in \cite{guigues2016isddp} and our assumptions, we can find $r_0>0$ such  that $M_*^2(r_0)$ is an upper bound for 
$\mathbb{E}[\| G(x_1^t , \xi_2^t , \varepsilon_t)\|_*^2 ]$. 
Using this observation, \eqref{smdifinal}, and \eqref{firstrelconvth21} (which still holds), we get
\begin{equation}\label{nearlyendsecond}
\begin{array}{lll}
\mathbb{E}[\hat f_N - f_{1 *}  ]& \leq &  
 \frac{1}{N}\Big(\theta_2 \Big( 1  + \displaystyle \int_{1}^N \frac{dx}{x^2} \Big) +{\overline{\mathcal{U}}} \sqrt{\theta_2} \Big(1 + \displaystyle \int_{1}^N \frac{dx}{x}  \Big) \Big) 
 + \frac{1}{2 \theta_1 \sqrt{N}}\Big(D_{\omega, X_1}^2   + \frac{M_*^2(r_0) \theta_1^2}{\mu(\omega)}  \Big)\\
 & \leq & \frac{2\theta_2 +{\overline{\mathcal{U}}} \sqrt{\theta_2}}{N} + \frac{{\overline{\mathcal{U}}} \sqrt{\theta_2} \ln(N)}{N} + 
\frac{\frac{D_{\omega, X_1}^2}{\theta_1} + \frac{\theta_1 M_*^2(r_0) }{\mu( \omega ) }   }{2\sqrt{N}}.
\end{array}
\end{equation}
Finally
\begin{equation}\label{nearlyendthird}
\begin{array}{lcl}
\displaystyle 0 & \stackrel{\eqref{ismdrn1}}{\leq} &\displaystyle \frac{1}{\Gamma_N} \sum_{\tau=1}^N \gamma_\tau \mathbb{E}[f(x_1^\tau )] - f_{1 *}  \\
& = & \displaystyle \frac{1}{\Gamma_N} \sum_{\tau=1}^N \gamma_\tau \mathbb{E}[f_1(x_1^\tau ) + \mathcal{Q}(x_1^\tau )] - f_{1 *}  \\
& \stackrel{\eqref{immfollowsendfirt}}{\leq } &\displaystyle  \frac{1}{\Gamma_N} \sum_{\tau=1}^N \gamma_\tau \mathbb{E}[f_1(x_1^\tau ) + f_2(x_2^\tau , x_1^\tau , \xi_2^\tau )] - f_{1 *} = \mathbb{E}[\hat f_N - f_{1 *}].
\end{array}
\end{equation}
Combining \eqref{nearlyendsecond} and  \eqref{nearlyendthird} we obtain \eqref{ismd2ineq}.\hfill
\end{proof}


\section{Numerical experiments} \label{sec:simualgorithms}

We compare the performances of SMD, ISMD, SAA (Sample Average Approximation, see \cite{shadenrbook}),
and the L-shaped method (see \cite{birge-louv-book}) on two simple two-stage quadratic stochastic programs
which satisfy the assumptions of Theorems \ref{ismdrn1prop} and \ref{ismdrn2}.

The first two-stage program is
\begin{equation}\label{smdmodel11}
\left\{ 
\begin{array}{l}
\min \;c^T x_1 + \mathbb{E}[\mathfrak{Q}(x_1,\xi_2)]\\
x_1 \in \{x_1 \in \mathbb{R}^n : x_1 \geq 0, \sum_{i=1}^n x_1(i) = 1\}
\end{array}
\right.
\end{equation}
where the second stage recourse function is given by
\begin{equation}\label{smdmodel12}
\mathfrak{Q}(x_1 ,\xi_2)=\left\{ 
\begin{array}{l}
\displaystyle  \min_{x_2 \in \mathbb{R}^n} \;\frac{1}{2}\left( \begin{array}{c}x_1\\x_2\end{array} \right)^T \Big( \xi_2 \xi_2^T + \lambda I_{2 n} \Big)\left( \begin{array}{c}x_1\\x_2\end{array} \right) + \xi_2^T \left( \begin{array}{c}x_1\\x_2\end{array} \right)  \\
x_2 \geq 0, \displaystyle \sum_{i=1}^n x_2(i) = 1.
\end{array}
\right.
\end{equation}

The second two-stage program is 
\begin{equation}\label{smdmodel21}
\left\{
\begin{array}{l}
\min \; c^T x_1 + \mathbb{E}[\mathfrak{Q}(x_1,\xi_2)]\\
x_1 \in \{x_1 \in \mathbb{R}^n : \|x_1-x_0\|_2 \leq 1  \}
\end{array}
\right. 
\end{equation}
where cost-to-go function $\mathfrak{Q}(x_1,\xi_2)$ has nonlinear objective and constraint coupling functions and is given by
\begin{equation}\label{smdmodel22}
\mathfrak{Q}(x_1,\xi_2)=\left\{ 
\begin{array}{l}
\displaystyle  \min_{x_2 \in \mathbb{R}^n} \;\frac{1}{2}\left( \begin{array}{c}x_1\\x_2\end{array} \right)^T \Big( \xi_2 \xi_2^T + \lambda I_{2 n} \Big) \left( \begin{array}{c}x_1\\x_2\end{array} \right) + \xi_2^T \left( \begin{array}{c}x_1\\x_2\end{array} \right)  \\
\frac{1}{2} \|x_2-y_0\|_2^2 + \frac{1}{2}\|x_1-x_0\|_2^2 - \frac{R^2}{2} \leq 0.
\end{array}
\right.
\end{equation}
For both problems, $\xi_2$ is a Gaussian random vector in $\mathbb{R}^{2n}$ and $\lambda>0$.
We consider several instances of these problem
with $n=5$, $10$, $200$, $400$, and $n=600$.
For each instance, the components of $\xi_2$ are independent with means 
and standard deviations 
randomly generated in respectively intervals 
$[5,25]$ and $[5,15]$.
We fix $\lambda=2$ while the components of $c$ are generated randomly in interval $[1,3]$.
For problem \eqref{smdmodel21}-\eqref{smdmodel22} we also take $R=5$ and
$x_0(i)=y_0(i)=10,i=1,\ldots,n$.

In SMD and ISMD, we take $\omega(x)=\sum_{i=1}^n x_i \ln(x_i)$ for problem \eqref{smdmodel11}-\eqref{smdmodel12}. 
For this distance generating function, $x_+=$Prox$_{x}( \zeta)$ can be computed analytically for $x \in \mathbb{R}^n$ with $x>0$ (see \cite{nemjudlannem09, guiguesmstep17} for details): 
defining $z \in \mathbb{R}^n$ by
$z(i) = \ln(x(i))$ we have $x_+(i)=\exp(z_{+}(i))$ where 
$$
z_{+}=w-\ln\left(\sum_{i=1}^n e^{w(i)}\right) \mathbf{1} \mbox{ with }w=z-\zeta-\max_{i} [z(i)-\zeta(i)],
$$
and with $\mathbf{1}$ a vector in $\mathbb{R}^n$ of ones. 

For problem \eqref{smdmodel21}-\eqref{smdmodel22}, SMD and ISMD are run taking distance generating function
$\omega(x)=\frac{1}{2}\|x\|_2^2$ (in this case, SMD is just the Robust Stochastic Approximation).
For this choice of $\omega$, if $x_{+}=\mbox{Prox}_{x}(\zeta)$  we have 
$$
x_+ = \left\{ 
\begin{array}{ll}
x-\zeta & \mbox{if }\|x-\zeta-x_0\|_2 \leq 1,\\
x_0 + \frac{x-\zeta-x_0}{\|x-\zeta-x_0\|_2} & \mbox{otherwise.}
\end{array}
\right.
$$

In SMD and ISMD, the interior point solver of the Mosek Optimization Toolbox \cite{mosek}
is used at each iteration to solve the quadratic second stage problem (given first stage decision $x_1^t$ and realization $\xi_2^t$ of $\xi_2$ at iteration $t$)
and constant steps are used: if there are $N$ iterations, the step $\gamma_t$ for iteration $t$ is $\gamma_t=\frac{1}{\sqrt{N}}$.
For ISMD, we limit the number of iterations of Mosek solver used to solve subproblems.\footnote{According to current Mosek documentation, it is not possible to use absolute
errors. Therefore, early termination of the solver can either be obtained limiting the number
of iterations or defining relative errors.} More precisely, we consider four strategies for the limitation of these numbers of iterations 
given in Table \ref{tablenumberiter0} in the Appendix, which
define four variants of ISMD denoted by ISMD 1, ISMD 2, ISMD 3, and ISMD 4.
The variants that most limit the number of iterations are ISMD 1 and ISMD 2.
All methods were implemented in Matlab and run on an Intel Core i7, 1.8GHz, processor
with 12,0 Go of RAM.

To check the implementations and compare the accuracy and CPU time of all methods,
we first consider problems \eqref{smdmodel11}-\eqref{smdmodel12}
and \eqref{smdmodel21}-\eqref{smdmodel22} with $n=5,10$, and a large sample 
of size $N=20\;000$ of $\xi_2$.\footnote{The deterministic equivalents of these
instances are already large size quadratic programs. For instance, for 
$n=10$, the deterministic equivalent of Problem \eqref{smdmodel21}-\eqref{smdmodel22} 
is a quadratically constrained quadratic program with 200 010
variables and 20 0001 quadratic constraints.} In these experiments, the L-shaped method terminates
when the relative error is at most 5\%. The CPU time needed to solve these instances 
with the L-shaped method, SAA, and SMD are given in Table \ref{cpufirstsmallinstances}. For these instances, we also report in Table \ref{optvaluefirstinstances}
the approximate optimal values given by all methods knowing that for the L-shaped
method we report the value of the last upper bound computed.
For SMD, the approximate optimal value after $N$ iterations is given by $\hat f_N$.
On the four experiments, all methods give very close approximations of the optimal
value, which is a good indication that the methods were well implemented.
SMD is by far the quickest and SAA by far the slowest. 
For the instance of Problem \eqref{smdmodel11}-\eqref{smdmodel12} with $n=10$,
we report in the left plot of Figure \ref{figSMD1} the evolution of the approximate optimal value
along the iterations of SMD.\footnote{Naturally, after 
running $t-1$ of the $N-1$ total iterations, the approximate optimal value computed by SMD is
$\displaystyle \frac{1}{\sum_{\tau=1}^t \gamma_\tau(N)} \sum_{\tau=1}^t  \gamma_\tau( N) \Big(  f_1(x_1^{N,\tau}) + f_2( x_2^{N,\tau }, x_1^{N,\tau}, \xi_2^{N,\tau}) \Big)$
obtained on the basis of sample $\xi_2^{N,1},\ldots,\xi_2^{N,t}$ of $\xi_2$.}
We also report on the right plot of this figure the evolution of the upper and lower
bounds computed along the iterations of the L-shaped method for  
the instance of Problem \eqref{smdmodel11}-\eqref{smdmodel12} with $n=10$.
For problem \eqref{smdmodel21}-\eqref{smdmodel22}, the evolution of 
the approximate optimal value
along the iterations of SMD is represented in Figure \ref{figSMD2}.
Observe that with SMD the approximate optimal value is not the value of the objective function
at a feasible point and therefore some of these approximations can be below the optimal value 
of the problem.

\begin{table}
\begin{tabular}{|c|c|c|c|c|c|}
\hline 
$n$ & $N$ & Problem & {\tt{L-shaped}} &   {\tt{SAA}} & {\tt{SMD}}\\
\hline 
5 & 20 000 &  \eqref{smdmodel11} & 57.3 & 3 698.7 & 18.5 \\
\hline
5 & 20 000 &  \eqref{smdmodel21} & 53.1 & 3 943.8 & 22.7 \\
\hline
10 & 20 000 &  \eqref{smdmodel11} & 278.1 &$3.32\small{\times}10^5$ & 28.2 \\
\hline
10 & 20 000 &  \eqref{smdmodel21} & 70.5 & 4 126.5  & 33.4 \\
\hline
\end{tabular}
\caption{CPU time in seconds required to solve instances of problems 
\eqref{smdmodel11}-\eqref{smdmodel12} and \eqref{smdmodel21}-\eqref{smdmodel22} (for 
$n=5,10$ and $N=20\;000$) obtained with
the L-shaped method, SAA, and SMD.}\label{cpufirstsmallinstances}
\end{table}

\begin{table}
\begin{tabular}{|c|c|c|c|c|c|}
\hline 
$n$ & $N$ & Problem & {\tt{L-shaped}} &   {\tt{SAA}} & {\tt{SMD}}\\
\hline 
5 & 20 000 &  \eqref{smdmodel11} & 210.9 & 210.7 & 210.6 \\
\hline
5 & 20 000 &  \eqref{smdmodel21} & 1.122$\small{\times}10^6$ & $1.121\small{\times}10^6$ & $1.120\small{\times}10^6$ \\
\hline
10 & 20 000 &  \eqref{smdmodel11} & 78.8 & 78.9 &78.6 \\ 
\hline
10 & 20 000 &  \eqref{smdmodel21} & $3.020\small{\times}10^6$ & $3.016\small{\times}10^6$ & $3.015\small{\times}10^6$ \\ 
\hline
\end{tabular}
\caption{Approximate optimal value of instances of 
problems \eqref{smdmodel11}-\eqref{smdmodel12} and 
\eqref{smdmodel21}-\eqref{smdmodel22}
(for $n=5, 10$ and $N=20\;000$)
obtained with the L-shaped method, SAA, and SMD.}\label{optvaluefirstinstances}
\end{table}

\begin{figure}[!htbp]
\centering
\begin{tabular}{cc}
\includegraphics[scale=0.65]{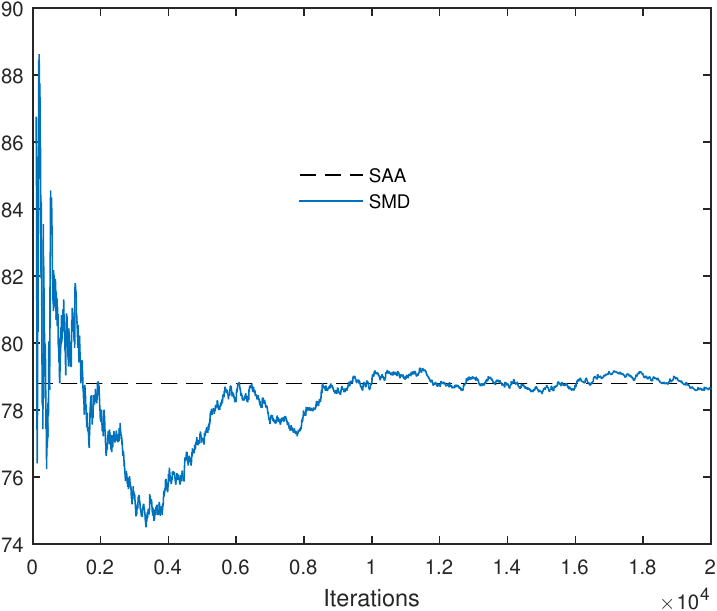}
&
\includegraphics[scale=0.65]{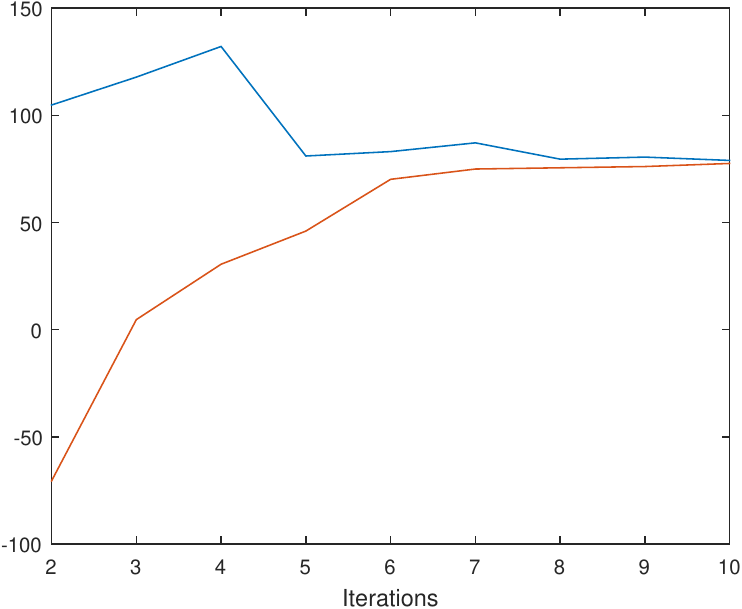}
\end{tabular}
\caption{Left plot: optimal value of our instance of  Problem \eqref{smdmodel11}-\eqref{smdmodel12} with 
$n=10$ estimated using SAA as well as evolution of the 
approximate optimal value computed along the iterations of SMD.
Right plot: for the same instance, evolution of the lower and upper bounds computed along the iterations
of the L-shaped method.}\label{figSMD1}
\end{figure}

\begin{figure}[!htbp]
\centering
\begin{tabular}{cc}
\includegraphics[scale=0.65]{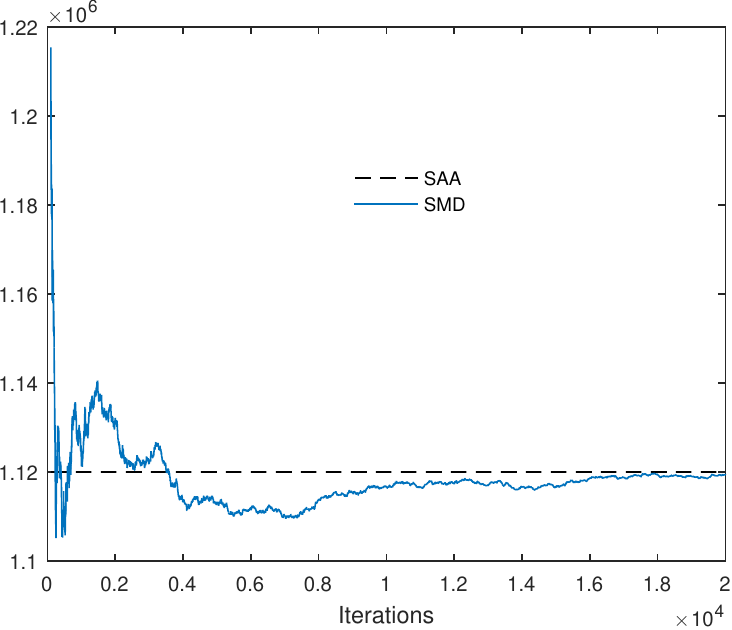}
&
\includegraphics[scale=0.65]{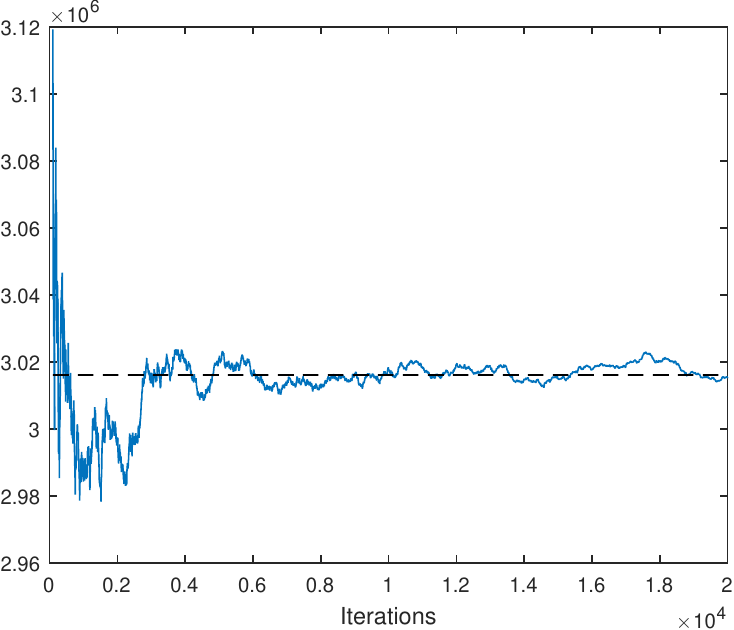}
\end{tabular}
\caption{Left plot: optimal value of our instance of  Problem \eqref{smdmodel21}-\eqref{smdmodel22} with 
$n=5$ estimated using SAA as well as evolution of the 
approximate optimal value computed along the iterations of SMD.
Right plot: same outputs for Problem \eqref{smdmodel21}-\eqref{smdmodel22} and 
$n=10$.}\label{figSMD2}
\end{figure}

We now consider larger instances taking $n=200$, $400,$ and $600$.
For these simulations we do not use SAA and L-shaped method anymore 
which were not 
as efficient as SMD on previous simulations and require prohibitive computational time
for $n=200, 400, 600$, and we compare the performance of SMD and
the four variants ISMD 1, ISMD 2, ISMD 3, and ISMD 4 of ISMD defined above. 

For $n=200$ and $n=400$, we run all methods 10 times taking samples of $\xi_2$
of size $N=2000$ for $n=200$, of size $N=1000$ for Problem \eqref{smdmodel11}-\eqref{smdmodel12}
and $n=400$, and  of size $N=500$ for  Problem \eqref{smdmodel21}-\eqref{smdmodel22}
and $n=400$. For $n=600$, it takes much more time to load and solve subproblems and we
only run SMD and ISMD once taking a sample  of size $N=500$ for Problem \eqref{smdmodel11}-\eqref{smdmodel12}
and of size $N=300$ for Problem \eqref{smdmodel21}-\eqref{smdmodel22}.\footnote{Due to the increase in computational time when $N$ increases, we do not take the largest sample size $N=2000$
for all instances. However, for all instances and values of $N$ chosen, we observe a stabilization of the approximate
optimal value before stopping the algorithm, which indicates a good solution has been found at termination.}

In Figure \ref{figSMD3}, we report for our instances of Problem 
\eqref{smdmodel11}-\eqref{smdmodel12} the mean (computed over the 10 
runs of the methods for $n=200, 400$)  approximate optimal values 
along the iterations of SMD and our variants 
of ISMD.\footnote{When SMD (and similarly for ISMD) is run on samples of $\xi_2$ of size $N$, we have seen
how to compute at iteration $t-1$ an estimation 
$\displaystyle \frac{1}{\sum_{\tau=1}^t \gamma_\tau(N)} \sum_{\tau=1}^t  \gamma_\tau( N) \Big(  f_1(x_1^{N,\tau}) + f_2( x_2^{N,\tau }, x_1^{N,\tau}, \xi_2^{N,\tau}) \Big)$
of the optimal value on the basis of sample $\xi_2^{N,1},\ldots,\xi_2^{N,t}$  of $\xi_2$.
The mean approximate optimal value after $t-1$ iterations is obtained running SMD on 10 independent
samples of $\xi_2$ of size $N$ and computing the mean of these values on these samples.} We also report on this figure the empirical distribution (over the 
10 runs of the methods for $n=200, 400$) of the total time required to
solve the problem instances with SMD and our variants of ISMD.

As expected, ISMD 1 and ISMD 2 complete the $N$ iterations quicker (since they run Mosek 
for less iterations) but start with worse approximations of the optimal values.
ISMD 3 and ISMD 4 also complete the $N$ iterations quicker than SMD but 
provide approximations of the optimal values very close to SMD along the iterations
of the method and in particular at termination, see also 
Table \ref{tabletimesmdbis} which gives the mean approximate optimal value at
the last iteration $N$ for all methods.
We should also note that most of the computational time for these methods
is spent in loading the data for Mosek solver through a series of loops
and this step requires the same computational time for all methods.
Therefore, the difference in computational time only comes from the 
time spent by Mosek to solve subproblems. With a C++ or Fortran implementation,
this time would remain similar but the loops for loading the data would be much
quicker and the total solution time would decrease by a much more important factor.
However, even with our Matlab implementation, the total time decreases significantly.

\begin{figure}[!htbp]
\centering
\begin{tabular}{cc}
\includegraphics[scale=0.58]{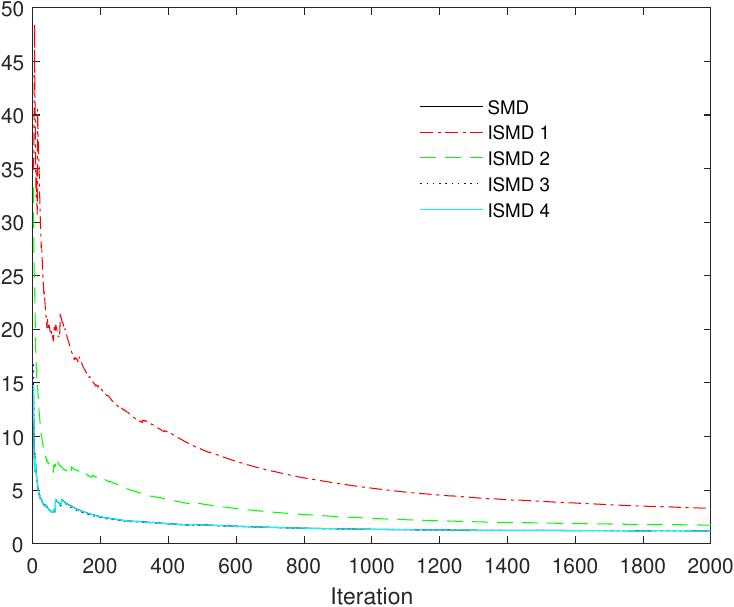}
&
\includegraphics[scale=0.58]{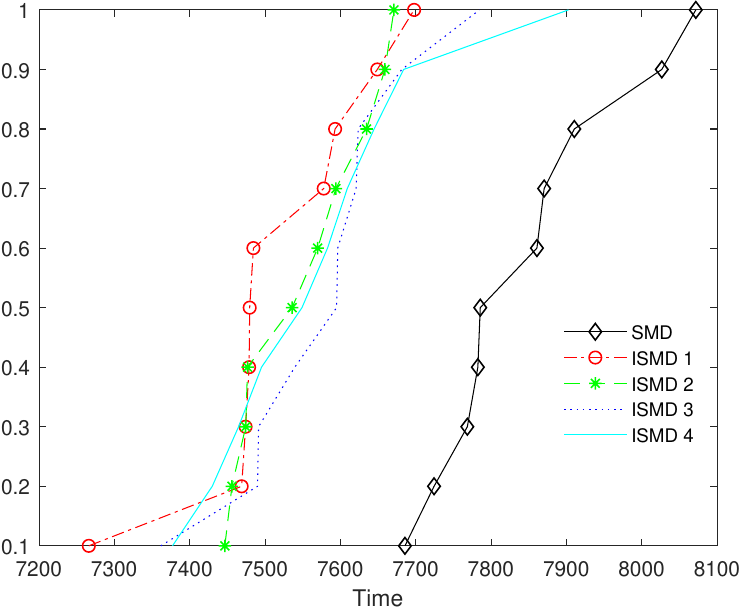}
\\
{$n=200$} &   {$n=200$}\\
\includegraphics[scale=0.58]{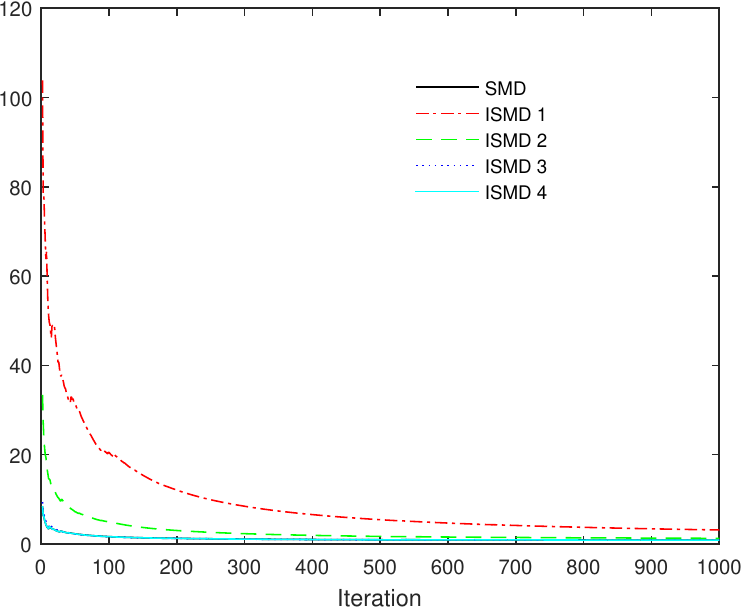}
&
\includegraphics[scale=0.58]{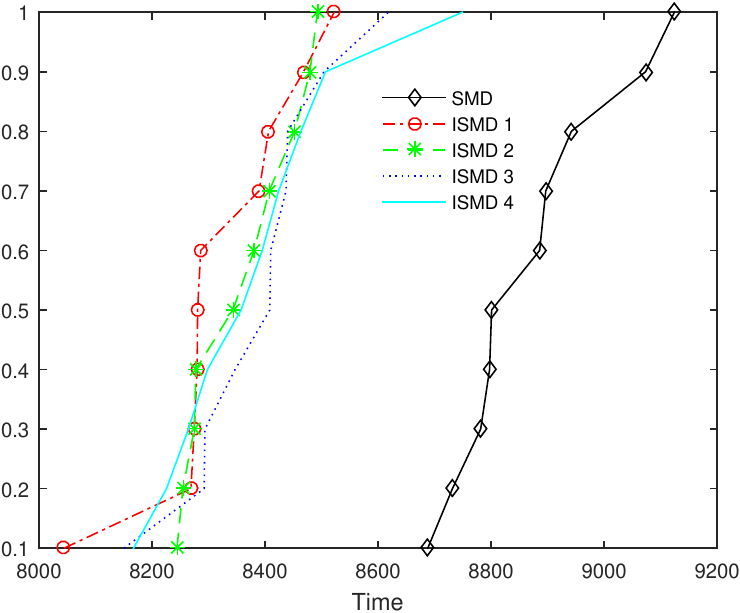}
\\
{$n=400$} &   {$n=400$}\\
\end{tabular}
\begin{tabular}{c}
\includegraphics[scale=0.58]{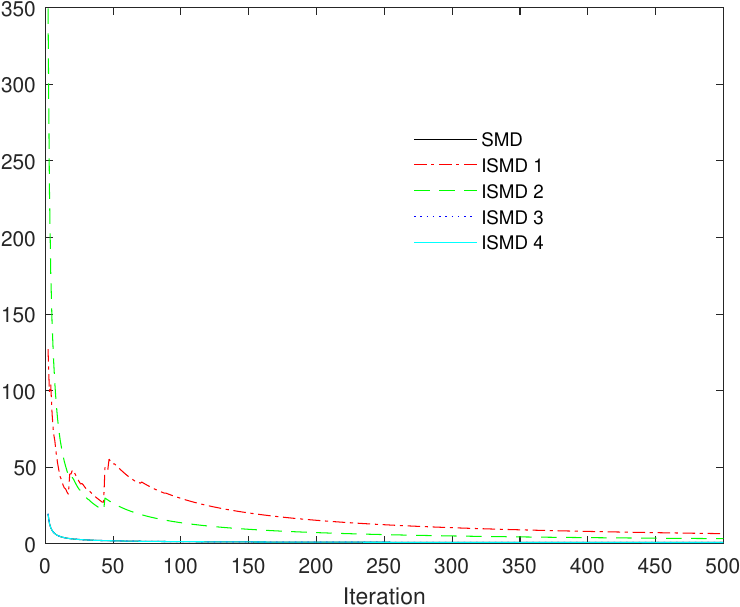}\\
$n=600$
\end{tabular}
\caption{Top left plot: approximate optimal values of our instance of 
Problem \eqref{smdmodel11} with  $n=200$ along the iterations of SMD and our variants
of ISMD. Top right plot: empirical distribution of the solution time in seconds
on this instance and for these methods. Middle plots: same as the top plot replacing 
$n=200$ by $n=400$. Bottom plot: same as the top left plot replacing $n=200$
by $n=600$.}\label{figSMD3}
\end{figure}

For our instances of Problem \eqref{smdmodel12}-\eqref{smdmodel22},
we report in Figure \ref{figSMD4} the mean (over the 10 runs for 
$n=200$ and $n=400$) approximate optimal values computed along the
iterations of SMD and our variants of ISMD. 
For the instances $n=200$ and $n=400$, we also report in Figure \ref{figSMD5}
the empirical distribution of the total solution time and of the
time required for Mosek to solve subproblems for SMD and all variants of
ISMD. The remarks made for Problem \eqref{smdmodel11} still apply for these
simulations performed on Problem \eqref{smdmodel21}.
We also refer to Table \ref{tabletimesmdbis} which provides
the mean approximate optimal value at
the last iteration $N$ for all methods. As for Problem \eqref{smdmodel11},
ISMD 3 and ISMD 4 provide after our $N-1$ iterations a good approximation of the
optimal value, very close to the approximation obtained with SMD but require
less computational time.

\begin{figure}[!htbp]
\centering
\begin{tabular}{cc}
\includegraphics[scale=0.65]{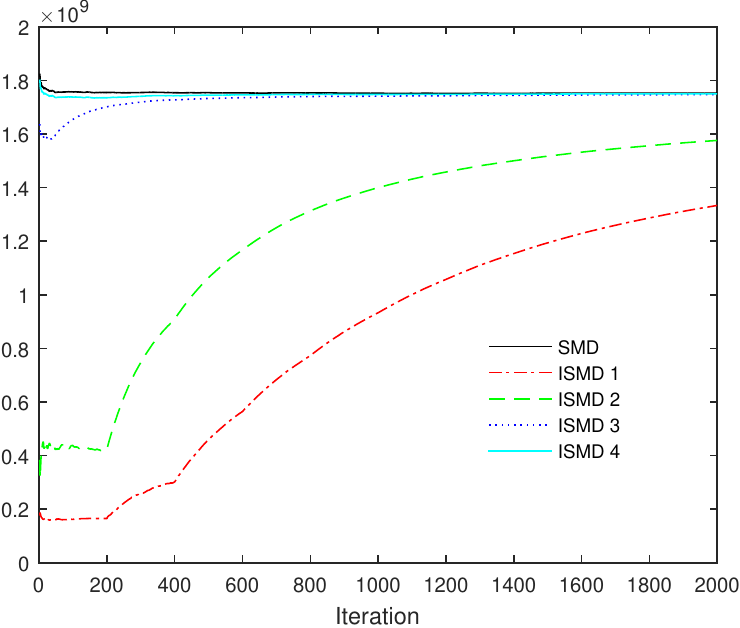}
&
\includegraphics[scale=0.65]{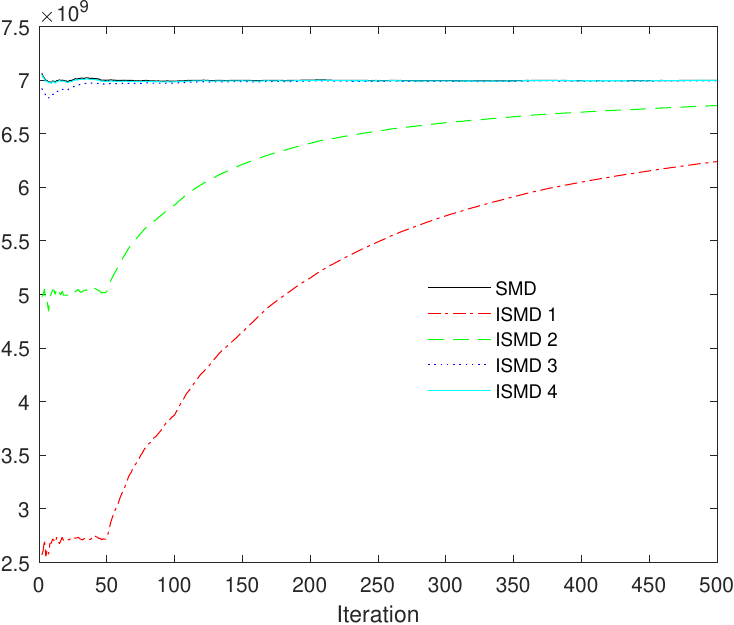}
\\
{$n=200$} &   {$n=400$}\
\end{tabular}
\begin{tabular}{c}
\includegraphics[scale=0.65]{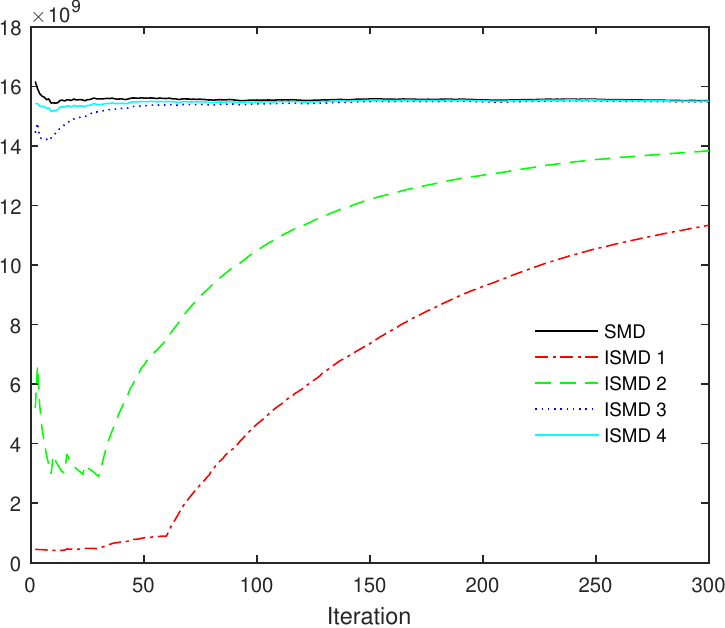}\\
{$n=600$}
\end{tabular}
\caption{Top left plot: approximate optimal values of our instance of 
Problem \eqref{smdmodel21} with  $n=200$ along the iterations of SMD and our variants
of ISMD. Top right and bottom plots provide the same graphs for respectively 
$n=400$ and $n=600$.}\label{figSMD4}
\end{figure}

\begin{figure}[!htbp]
\centering
\begin{tabular}{cc}
\includegraphics[scale=0.65]{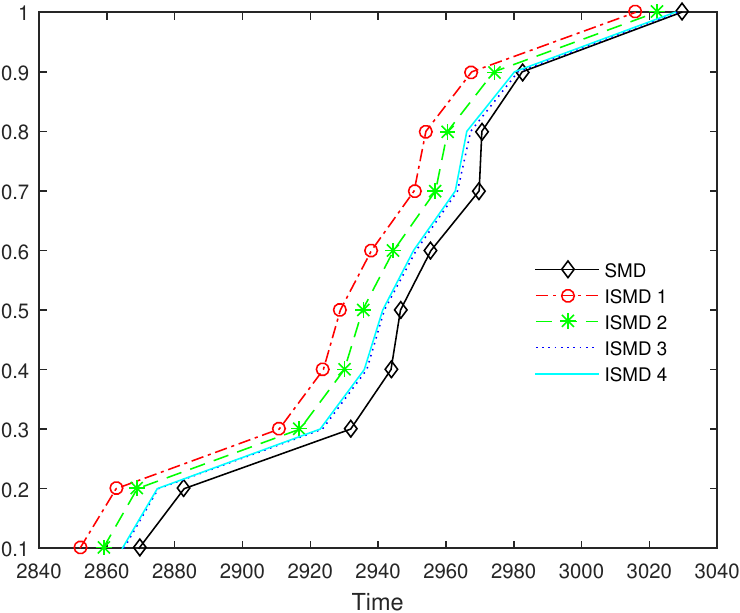}
&
\includegraphics[scale=0.65]{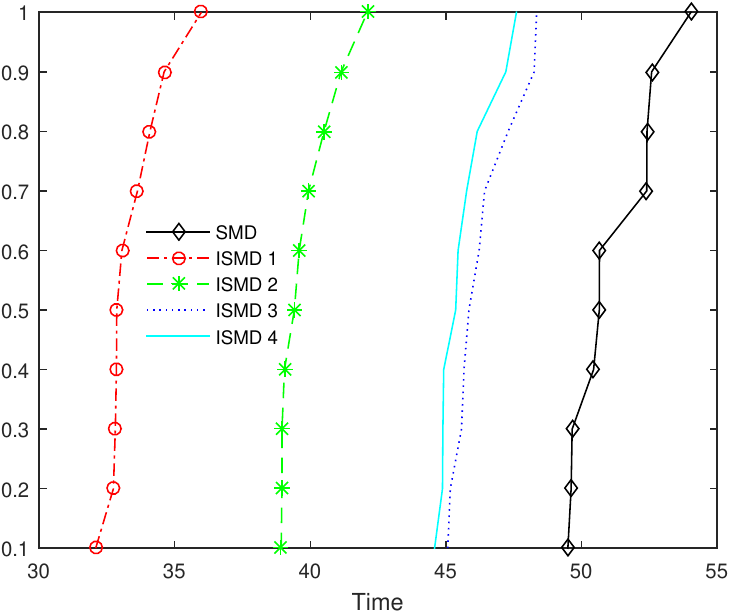}
\\
{$n=200$} &   {$n=200$}\\
\end{tabular}
\begin{tabular}{cc}
\includegraphics[scale=0.65]{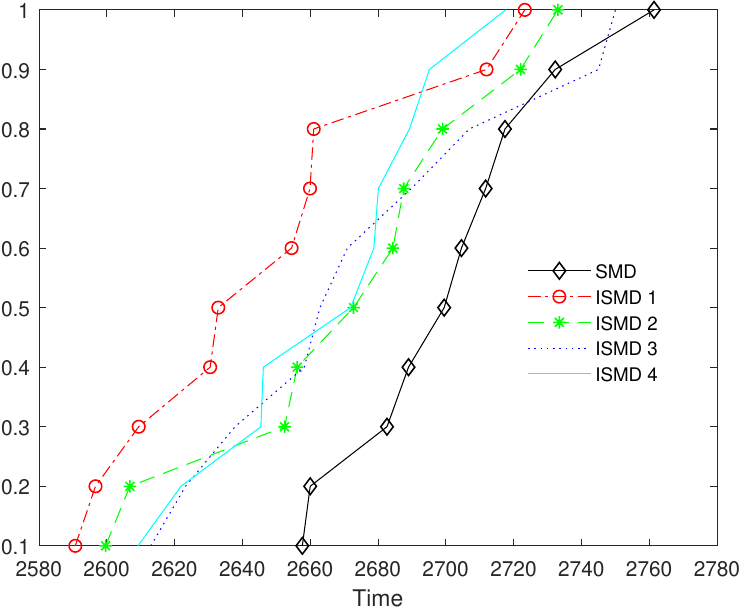}
&
\includegraphics[scale=0.65]{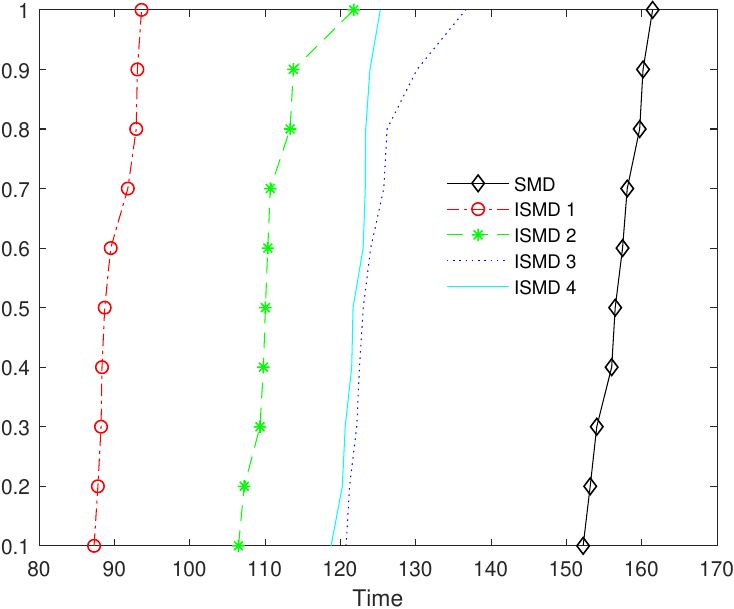}
\\
{$n=400$} &   {$n=400$}\\
\end{tabular}
\caption{
Top left: empirical distribution of the total solution time to solve Problem 
\eqref{smdmodel21} for SMD and and our four variants of ISMD for the instance with $n=200$. 
Right plot: empirical distribution of the time required for Mosek to solve 
all subproblems for that instance. Bottom plots: same outputs for $n=400$.}\label{figSMD5}
\end{figure}

\begin{table}
\centering
\begin{tabular}{|c|c|c|c|c|c|}
\hline
Instance  & SMD  &  ISMD 1 &  ISMD 2 &  ISMD 3 & ISMD 4    \\
\hline
$n=200$, Problem \eqref{smdmodel11}  &    1.2  & 3.2 & 1.7  &  1.2  &  1.2 \\
\hline
$n=400$, Problem \eqref{smdmodel11}  &    0.86  & 3.14  & 1.27  &  0.86  &  0.86 \\
\hline
$n=600$, Problem \eqref{smdmodel11}  &    0.81  & 6.59  & 3,33  &  0.81  &  0.81 \\
\hline
$n=200$, Problem \eqref{smdmodel21} & $1.7523\small{\times}10^9$    & 
$1.3335\small{\times}10^9$ & $1.5762\small{\times}10^9$ & $1.7472\small{\times}10^9$   & $1.7508\small{\times}10^9$  \\
\hline
$n=400$, Problem \eqref{smdmodel21} &   6.9978$\small{\times}10^9$  
& 6.2402$\small{\times}10^9$ &   6.7624$\small{\times}10^9$  & 
6.9943$\small{\times}10^9$ &   6.9972$\small{\times}10^9$  \\
\hline
$n=600$, Problem \eqref{smdmodel21} &    1.5524$\small{\times}10^{10}$  &1.1339$\small{\times}10^{10}$   & 
1.3838$\small{\times}10^{10}$& 1.5481$\small{\times}10^{10}$   &1.5512$\small{\times}10^{10}$   \\
\hline
\end{tabular}
\caption{Approximate optimal values of instances of Problems \eqref{smdmodel11} and \eqref{smdmodel21} 
estimated with SMD, ISMD 1, ISMD 2, ISMD 3, and ISMD 4.}
\label{tabletimesmdbis}
\end{table}

\section{Conclusion}

We introduced an inexact variant of SMD called ISMD to solve (general) nonlinear two-stage stochastic programs.
We have shown on two examples of two-stage nonlinear problems that ISMD can allow us to obtain  quicker than
SMD a good solution and a good approximation of the optimal value.

The method and convergence analysis was based on two studies of convex analysis: 
\begin{itemize}
\item[(a)] the computation of inexact cuts
for value functions of a large class of convex optimization problems having nonlinear objective and constraints
which couple the argument of the value function and the decision variable;
\item[(b)] the study of the strong concavity of the dual function of an optimization problem (used to derive one of our formulas for inexact cuts). 
\end{itemize}

It is worth mentioning that the formulas we derived for inexact cuts could also be used to propose inexact level methods \cite{lemnemnester}
to solve nonlinear two-stage stochastic programs \eqref{defpb4}-\eqref{pbsecondstage4}, when primal and dual second stage problems are solved
approximately (inexactly). 

It would also be interesting to test ISMD and the aforementioned inexact level methods on several relevant instances of nonlinear
two-stage stochastic programs. 



\section*{Acknowledgments} The author's research was 
partially supported by an FGV grant, CNPq grant 311289/2016-9,
and FAPERJ grant E-26/201.599/2014. The author would like to thank
Alberto Seeger for helpful discussions.

\addcontentsline{toc}{section}{References}
\bibliographystyle{plain}
\bibliography{Inexact_SMD_2_Stage}

\begin{thebibliography}{10}

\bibitem{mosek}
E.~D. Andersen and K.D. Andersen.
\newblock {\em The MOSEK optimization toolbox for MATLAB manual. Version 7.0},
  2013.
\newblock \url{https://www.mosek.com/}.

\bibitem{birge-louv-book}
J.~Birge and F.~Louveaux.
\newblock {\em {Introduction to Stochastic Programming}}.
\newblock Springer-Verlag, New York, 1997.

\bibitem{danglynn}
G.B. Dantzig and P.W. Glynn.
\newblock Parallel processors for planning under uncertainty.
\newblock {\em Annals of Operations Research}, 22:1--21, 1990.

\bibitem{guiguessiopt2016}
V.~Guigues.
\newblock Convergence analysis of sampling-based decomposition methods for
  risk-averse multistage stochastic convex programs.
\newblock {\em SIAM Journal on Optimization}, 26:2468--2494, 2016.

\bibitem{guiguesmstep17}
V.~Guigues.
\newblock Multistep stochastic mirror descent for risk-averse convex stochastic
  programs based on extended polyhedral risk measures.
\newblock {\em Mathematical Programming}, 163:169--212, 2017.

\bibitem{guigues2016isddp}
V.~Guigues.
\newblock {Inexact cuts in Stochastic Dual Dynamic Programming}.
\newblock {\em Siam Journal on Optimization}, 30:407--438, 2020.
\newblock \url{https://arxiv.org/abs/1809.02007}.

\bibitem{hhlem}
J-B Hiriart-Urruty and C.~Lemar\'echal.
\newblock {\em {Convex Analysis and Minimization Algorithms I}}.
\newblock Springer-Verlag, 1996.

\bibitem{infanger}
G.~Infanger.
\newblock {Monte Carlo (Importance) Sampling within a Benders Decomposition
  Algorithm for Stochastic Linear Programs}.
\newblock {\em Annals of Operations Research}, 39:69--95, 1992.

\bibitem{nestioud2010}
A.~Juditsky and Y.~Nesterov.
\newblock Primal-dual subgradient methods for minimizing uniformly convex
  functions.
\newblock {\em Available on arXiv at http://arxiv.org/abs/1401.1792}, 2010.

\bibitem{nemlansh09}
G.~Lan, A.~Nemirovski, and A.~Shapiro.
\newblock Validation analysis of mirror descent stochastic approximation
  method.
\newblock {\em Math. Program.}, 134:425--458, 2012.

\bibitem{lan2017}
G.~Lan and Z.~Zhou.
\newblock Dynamic stochastic approximation for multi-stage stochastic
  optimization.
\newblock {\em arXiv}, 2017.

\bibitem{lemnemnester}
C.~Lemar\'echal, A.~Nemirovski, and Y.~Nesterov.
\newblock New variants of bundle methods.
\newblock {\em Mathematical Programming}, 69:111--148, 1995.

\bibitem{nemjudlannem09}
A.~Nemirovski, A.~Juditsky, G.~Lan, and A.~Shapiro.
\newblock Robust stochastic approximation approach to stochastic programming.
\newblock {\em SIAM J. Optim.}, 19:1574--1609, 2009.

\bibitem{pereira}
M.V.F. Pereira and L.M.V.G Pinto.
\newblock Multi-stage stochastic optimization applied to energy planning.
\newblock {\em Math. Program.}, 52:359--375, 1991.

\bibitem{polyakjud92}
B.T. Polyak and A.~Juditsky.
\newblock Acceleration of stochastic approximation by averaging.
\newblock {\em SIAM J. Contr. and Optim.}, 30:838--855, 1992.

\bibitem{Rockafellar}
R.~T. Rockafellar and R.~J-B Wets.
\newblock {\em {Variational Analysis}}.
\newblock Grundlehren der Mathematischen Wissenschaften, Springer-Verlag, 1997.

\bibitem{ruz}
A.~Ruszczy\'nski.
\newblock A multicut regularized decomposition method for minimizing a sum of
  polyhedral functions.
\newblock {\em Mathematical Programming}, 35:309--333, 1986.

\bibitem{shadenrbook}
A.~Shapiro, D.~Dentcheva, and A.~Ruszczy\'nski.
\newblock {\em {Lectures on Stochastic Programming: Modeling and Theory}}.
\newblock SIAM, Philadelphia, 2009.

\bibitem{YuNeely2015}
H.~Yu and J.~Neely.
\newblock {On the Convergence Time of the Drift-Plus-Penalty Algorithm for
  Strongly Convex Programs}.
\newblock {\em Available at \url{http://arxiv.org/abs/1503.06235}}, 2015.

\end{thebibliography}

\section*{Appendix}
\if{
\begin{cor}\label{corinexactcuts} Consider the value functions $\mathcal{Q}: X \rightarrow \mathbb{R}$ where $\mathcal{Q}(x)$
is given by the optimal value of the following optimization problems:
\begin{equation}
\begin{array}{lll}
(a)\left\{ 
\begin{array}{l}
\min_{y } f(y,x)\\
Ay + B x=b,\\h(y)+k(x)\leq 0,\\
y \in Y,
\end{array}
\right.
&
(b)\left\{ 
\begin{array}{l}
\min_{y } f_0(y)+f_1(x)\\
Ay + B x=b,\\g(y,x)\leq 0,\\
y \in Y,
\end{array}
\right.
&
(c)\left\{ 
\begin{array}{l}
\min_{y } f_0(y)+f_1(x)\\
Ay + B x=b,\\h(y)+k(x)\leq 0,\\
y \in Y,
\end{array}
\right.\\
(d)\left\{ 
\begin{array}{l}
\min_{y }  f(y,x)\\
g(y,x)\leq 0,\\
y \in Y,
\end{array}
\right.
&
(e)\left\{ 
\begin{array}{l}
\min_{y } f(y,x)\\
h(y)+k(x)\leq 0,\\
y \in Y,
\end{array}
\right.
&
(f)\left\{ 
\begin{array}{l}
\min_{y } f_0(y)+f_1(x)\\
g(y,x)\leq 0,\\
y \in Y,
\end{array}
\right.\\
(g)\left\{ 
\begin{array}{l}
\min_{y } f_0(y)+f_1(x)\\
h(y)+k(x)\leq 0,\\
y \in Y,
\end{array}
\right.
& 
(h)\left\{ 
\begin{array}{l}
\min_{y } f(y,x)\\
A y +B x = b,\\
y \in Y,
\end{array}
\right.
& 
(i)\left\{ 
\begin{array}{l}
\min_{y } f_0(y)+f_1(x)\\
A y +B x = b,\\
y \in Y.
\end{array}
\right.
\end{array}
\end{equation}
For problems (b),(c),(f),(g), (i) above define $f(y,x)=f_0(y)+f_1(x)$ and for problems (a), (c), (e), (g) 
define $g(y,x)=h(y)+k(x)$.
With this notation, assume that (H1), (H2), (H3), (H4), (H5), (H6), and (H7) hold for these problems.
If $g$ is defined, let $L_{x}(y, \lambda, \mu)=f(y,x) + \lambda^T (Bx+Ay-b) + \mu^T g(y,x)$ be the Lagrangian 
and define 
$$
U = \max_{i=1,\ldots,p} \|\nabla_{x} g_{i}(\hat y(  \varepsilon  ), \bar x)\| \mbox{ and }\mathcal{U}_{\bar x}= 
\frac{f(y_{\bar x} , \bar x  )  - \mathcal{L}_{\bar x}  }{\min (-g_{i}(y_{\bar x} ,  \bar x), i=1,\ldots,p )}
$$
where  $\mathcal{L}_{\bar x}$ is any lower bound on $\mathcal{Q}( \bar x) $.
If $g$ is not defined, define $L_{x}(y, \lambda)=f(y,x) + \lambda^T (Bx+Ay-b)$.

Let $\bar x \in X$,
let $\hat y$ be an $\epsilon$-optimal feasible primal solution for problem \eqref{vfunctionq}
written for $x= \bar x$ 
and let $( {\hat \lambda}, \hat \mu )$ be an $\epsilon$-optimal feasible solution of the
corresponding dual problem, i.e., of problem
\eqref{dualpb} written for $x=\bar x$.

Then 
$
\mathcal{C}(x)=f(\hat y, \bar x)-\eta(\varepsilon,\bar x)+ \langle s(\bar x) , x - \bar x \rangle
$
is an inexact cut for $\mathcal{Q}$ at $\bar x$ where  the formulas
for $\eta(\varepsilon,\bar x)$ and $s(\bar x)$ in each of cases (a)-(i) above are the following:
\begin{equation}
\begin{array}{l}
(a)\left\{
\begin{array}{ll}
\eta(\varepsilon, \bar x)=\varepsilon + \Big( M_1(  \bar x ) \frac{1}{\sqrt{\alpha(\bar x)}}  +   \sqrt{2} \max(   \|B^T\|, \sqrt{p} U)  \frac{1}{\sqrt{\alpha_D( \bar x ) }} \Big) \emph{Diam}(X) \sqrt{2 \varepsilon},\\
s(\bar x)=\nabla_x f(\hat y, \bar x ) + B^T \hat \lambda + \sum_{i=1}^p \hat \mu_i \nabla_x k_i( \bar x ),
\end{array}
\right.\\
(b)\left\{
\begin{array}{ll}
\eta(\varepsilon, \bar x)=\varepsilon + \Big( M_2( \bar x) \mathcal{U}_{\bar x} \frac{1}{\sqrt{\alpha(\bar x)}} + \sqrt{2} \max(   \|B^T\|, \sqrt{p} U )  \frac{1}{\sqrt{\alpha_D( \bar x ) }} \Big) \emph{Diam}(X) \sqrt{2 \varepsilon},\\
s(\bar x)=\nabla_x f_1(\bar x ) + B^T \hat \lambda + \sum_{i=1}^p \hat \mu_i \nabla_x g_i(\hat y,  \bar x ),
\end{array}
\right.\\
(c)\left\{
\begin{array}{ll}
\eta(\varepsilon, \bar x)=\varepsilon + 2 \max(   \|B^T\|, \sqrt{p} U)\emph{Diam}(X) \sqrt{\frac{\varepsilon}{ \alpha_D( \bar x )}},\\
s(\bar x)=\nabla_x f_1(\bar x ) + B^T \hat \lambda + \sum_{i=1}^p \hat \mu_i \nabla_x k_i( \bar x ),
\end{array}
\right.\\
(d)\left\{
\begin{array}{ll}
\eta(\varepsilon, \bar x)=\varepsilon + \Big( ( M_1(  \bar x )  + M_2( \bar x) \mathcal{U}_{\bar x} )  \frac{1}{\sqrt{\alpha(\bar x)}}  +  U  \sqrt{\frac{p}{\alpha_D( \bar x )}} \Big) \emph{Diam}(X) \sqrt{2 \varepsilon},\\
s(\bar x)=\nabla_x f(\hat y, \bar x ) + \sum_{i=1}^p \hat \mu_i \nabla_x g_i(\hat y ,  \bar x ),
\end{array}
\right.\\
(e)\left\{
\begin{array}{ll}
\eta(\varepsilon, \bar x)=\varepsilon + \Big( \frac{M_1(  \bar x )}{\sqrt{\alpha(\bar x)}} + \sqrt{  \frac{p}{ \alpha_D(\bar x) }} U  \Big) \emph{Diam}(X) \sqrt{2 \varepsilon},\\
s(\bar x)=\nabla_x f(\hat y, \bar x ) +  \sum_{i=1}^p \hat \mu_i \nabla_x k_i( \bar x ),
\end{array}
\right.\\
(f)\left\{
\begin{array}{ll}
\eta(\varepsilon, \bar x)=\varepsilon + \Big( \frac{ M_2( \bar x)}{\sqrt{\alpha(\bar x)}} \mathcal{U}_{\bar x} +  U \sqrt{   \frac{p}{ \alpha_D(\bar x) }} \Big) \emph{Diam}(X) \sqrt{2 \varepsilon},\\
s(\bar x)=\nabla_x f_1(\bar x )  + \sum_{i=1}^p \hat \mu_i \nabla_x g_i(\hat y ,  \bar x ),
\end{array}
\right.\\
(g)\left\{
\begin{array}{ll}
\eta(\varepsilon, \bar x)=\varepsilon +  \emph{Diam}(X) \sqrt{\frac{2 \varepsilon p}{\alpha_D( \bar x )}}U,\\
s(\bar x)=\nabla_x f_1(\bar x ) + \sum_{i=1}^p \hat \mu_i \nabla_x k_i( \bar x ),
\end{array}
\right.\\
(h)\left\{
\begin{array}{ll}
\eta(\varepsilon, \bar x)=\varepsilon + \Big( \frac{M_1(  \bar x )}{\sqrt{\alpha(\bar x)}} +  \frac{\|B^T \|}{ \sqrt{\alpha_D(\bar x) }}  \Big) \emph{Diam}(X) \sqrt{2 \varepsilon},\\
s(\bar x)=\nabla_x f(\hat y, \bar x ) + B^T \hat \lambda,
\end{array}
\right.\\
(i)\left\{
\begin{array}{ll}
\eta(\varepsilon, \bar x)=\varepsilon +  \|B^T \| \sqrt{ \frac{2 \varepsilon}{\alpha_D( \bar x )}}   \emph{Diam}(X),\\
s(\bar x)=\nabla_x f_1(\bar x ) + B^T \hat \lambda.
\end{array}
\right.
\end{array}
\end{equation}
\end{cor}
\begin{proof} It suffices to follow the proof of Proposition \ref{defcutctk}, specialized to cases (a)-(i). For instance, let us check the formulas in case (g).
For (g), 
$
s(\bar x)=\nabla_x L_{\bar x}(\hat y,\hat \mu)=\nabla_x f_1(\bar x ) + \sum_{i=1}^p \hat \mu_i \nabla_x k_i( \bar x )
$
and
\begin{equation}\label{caseccutinexact}
\begin{array}{lll}
\|\nabla_x L_{\bar x}(\hat y,  \hat \mu) - \nabla_x L_{\bar x}(\bar y, \bar \mu)  \| 
& = &  \| \sum_{i=1}^p (\hat \mu_i - \bar \mu_i) \nabla_x k_i( \bar x )  \| \leq U \| \hat \mu - \bar \mu   \|_1 \\
& \leq & U \sqrt{p}\|\hat \mu - \bar \mu\| \leq U \sqrt{p} \sqrt{\frac{2 \varepsilon}{\alpha_D( \bar x )}}.
\end{array} 
\end{equation}
It then suffices to combine \eqref{qtcut} and \eqref{caseccutinexact}.\hfill
\end{proof}
}\fi

\begin{figure}[H]
\centering
\begin{tabular}{cc}
\includegraphics[scale=0.4]{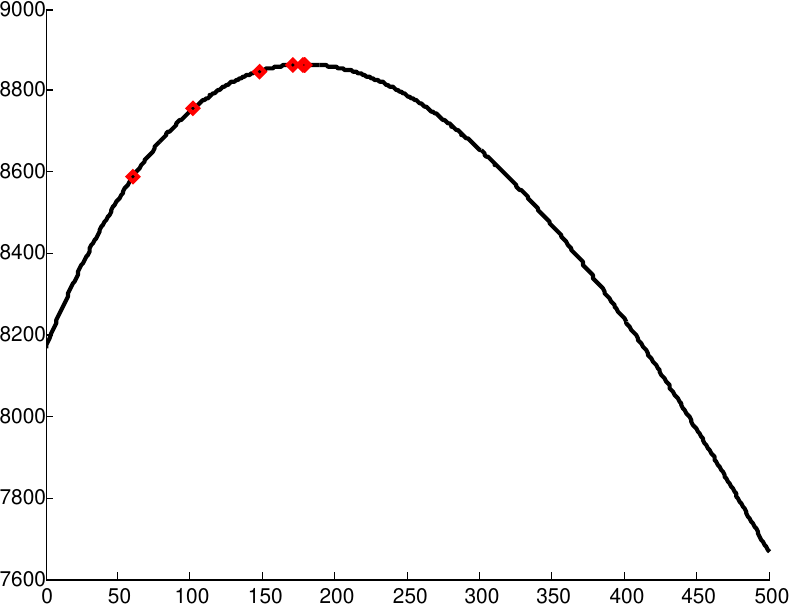}
&
\includegraphics[scale=0.4]{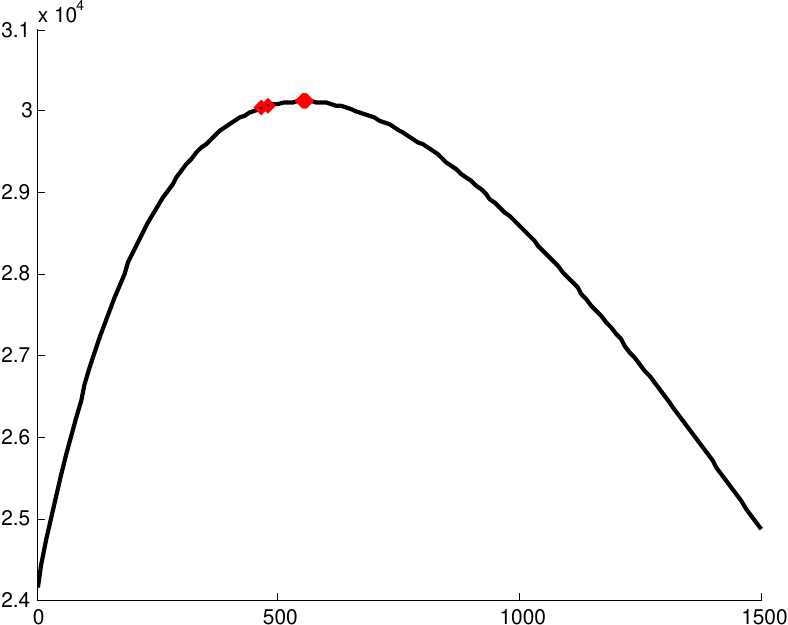}
\\
{$\lambda=1$, $n=1$, $\alpha(\bar x)=574$} &   {$\lambda=100$, $n=1$, $\alpha(\bar x)=637$}\\
\includegraphics[scale=0.4]{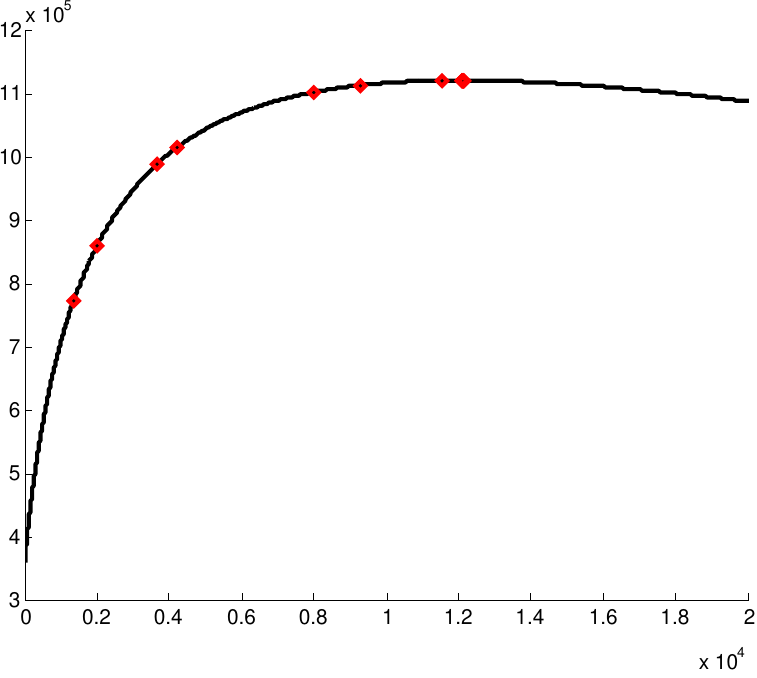}
&
\includegraphics[scale=0.4]{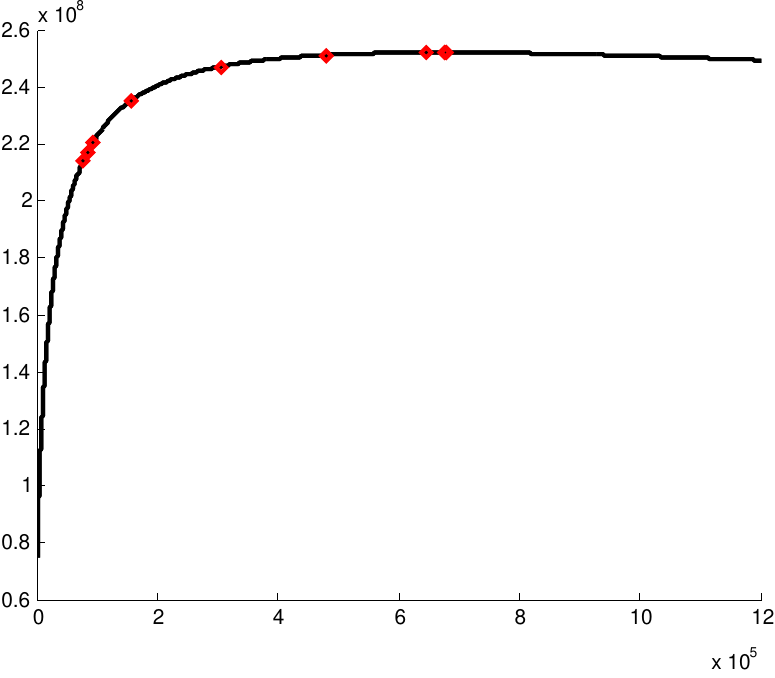}
\\
{$\lambda=1$, $n=10$, $\alpha(\bar x)=610$} &   {$\lambda=1$, $n=100$, $\alpha(\bar x)=2\,303$}\\
\includegraphics[scale=0.4]{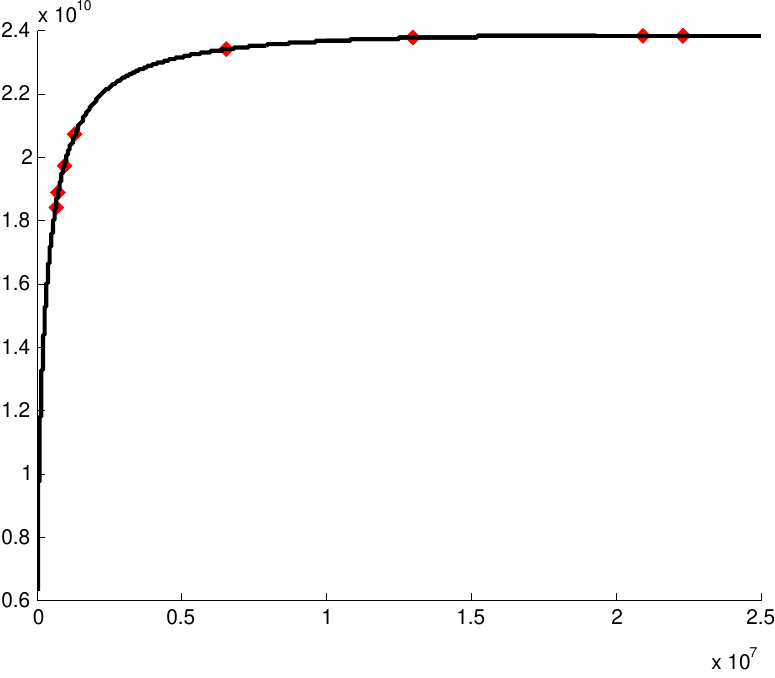}
&
\includegraphics[scale=0.4]{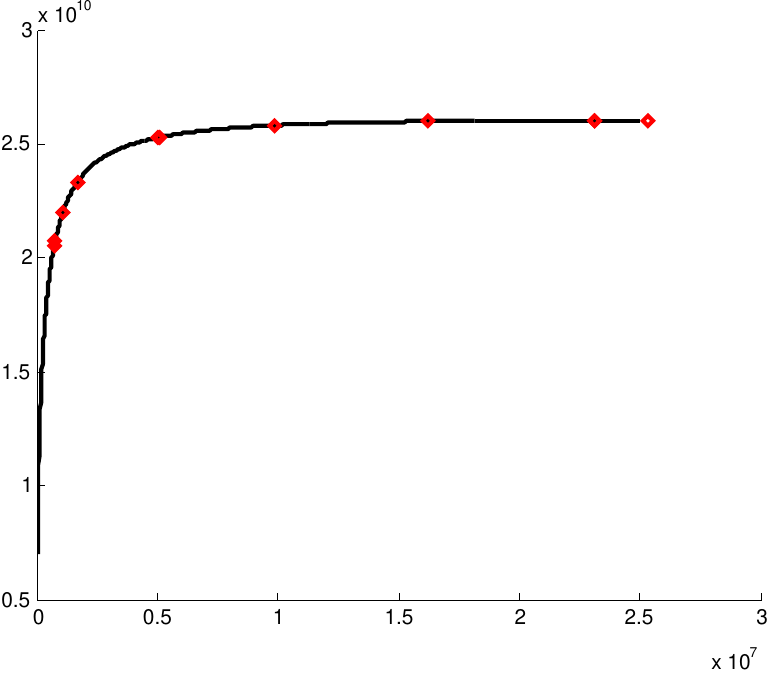}
\\
{$\lambda=1$, $n=1000$, $\alpha(\bar x)=23\,149$} &   {$\lambda=100$, $n=1000$, $\alpha(\bar x)=23\,988$}\\
\includegraphics[scale=0.4]{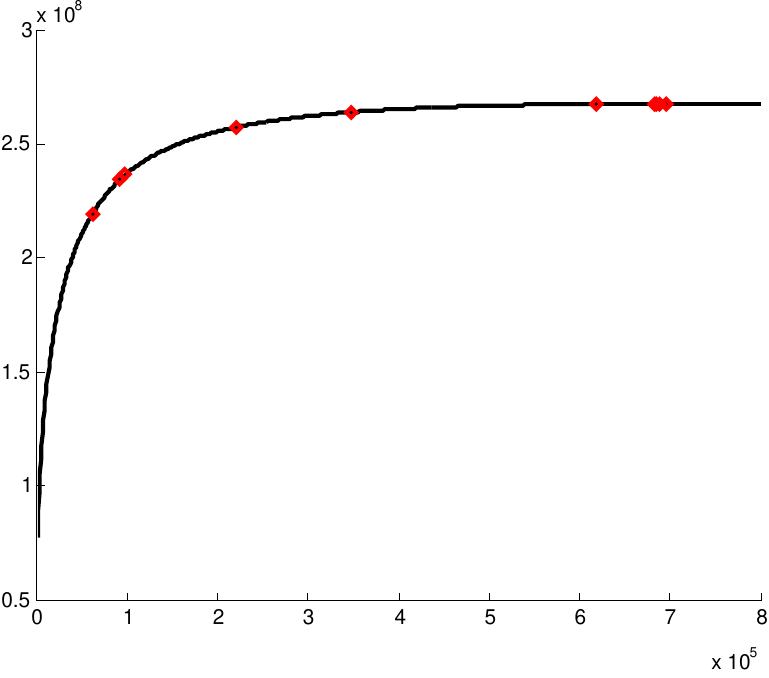}
&
\includegraphics[scale=0.4]{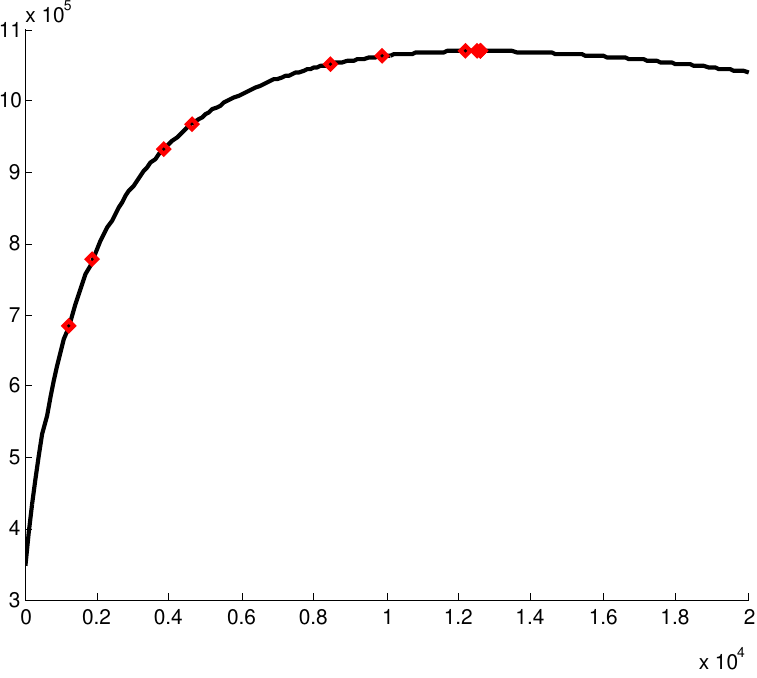}
\\
{$\lambda=100$, $n=100$, $\alpha(\bar x)=2\,839$} &   {$\lambda=100$, $n=10$, $\alpha(\bar x)=526$}
\end{tabular}
\caption{ \label{fig:f_1} Dual function $\theta_{\bar x}$ of problem \eqref{defqxsimple2} for some  $\bar x$ randomly drawn in ball $\{x \in \mathbb{R}^n: \|x-x_0 \|_2 \leq 1\}$, $S=AA^T + \lambda I_{2 n}$ for some random 
matrix $A$ with random entries in $[-20,20]$, and several values of the pair $(n,\lambda)$. The dual iterates are represented by red diamonds.}
\end{figure}

\begin{figure}[H]
\centering
\begin{tabular}{cc}
\includegraphics[scale=0.42]{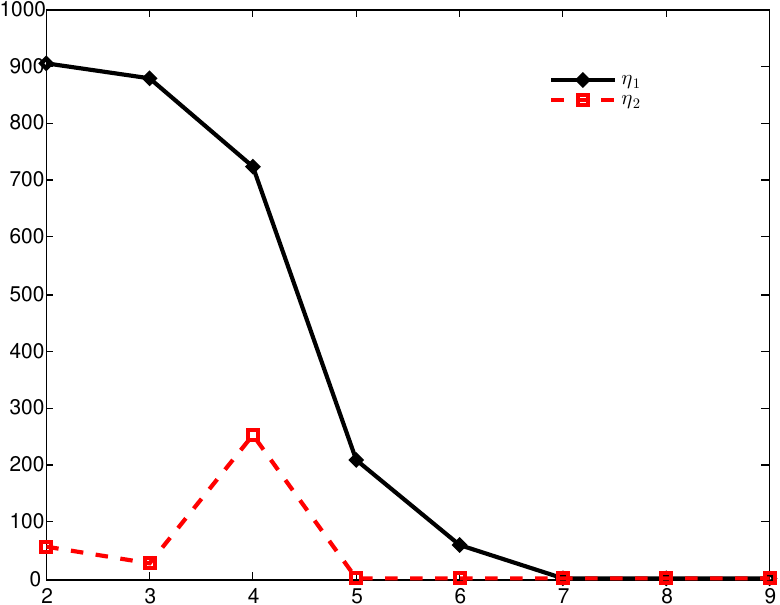}
&
\includegraphics[scale=0.42]{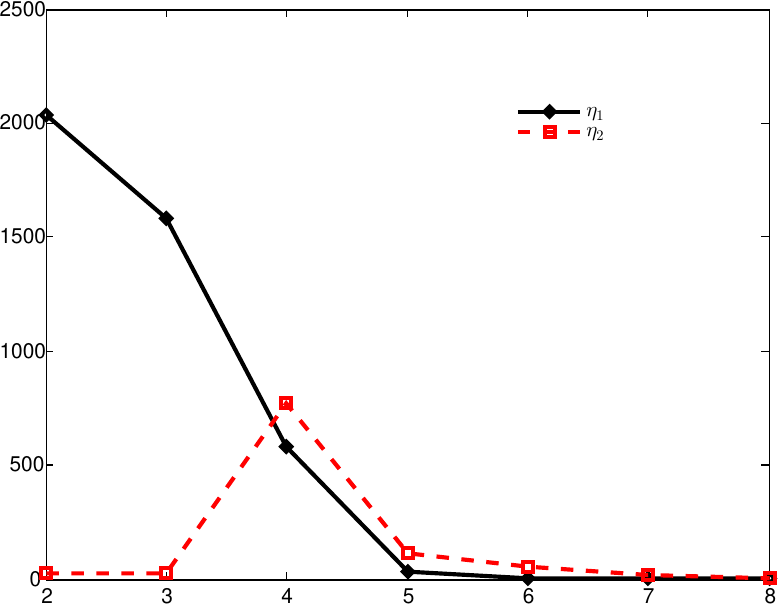}
\\
{$\lambda=1$, $n=1$, $\alpha(\bar x)=574$} &   {$\lambda=100$, $n=1$, $\alpha(\bar x)=637$}\\
\includegraphics[scale=0.42]{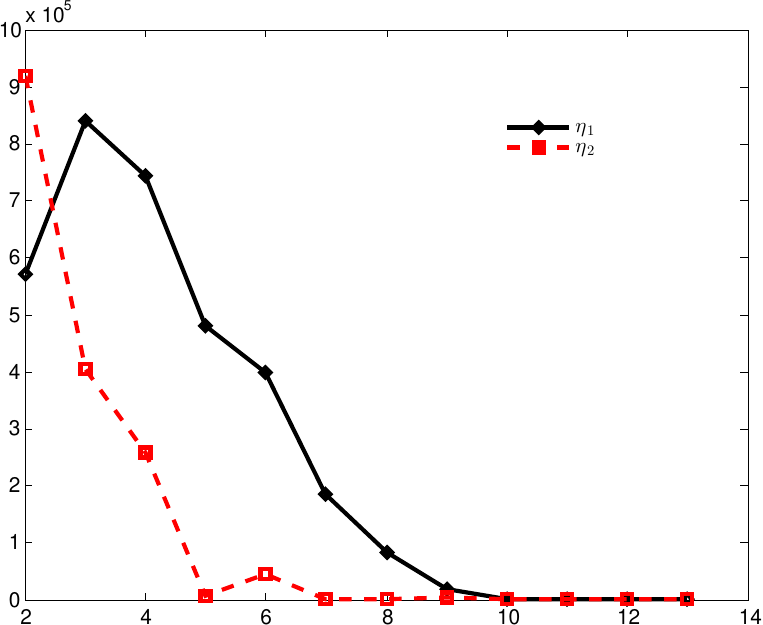}
&
\includegraphics[scale=0.42]{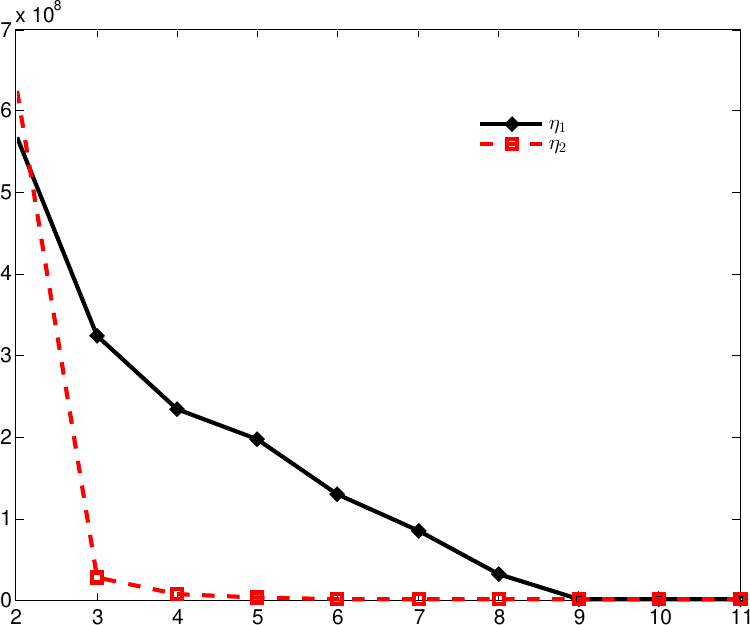}
\\
{$\lambda=1$, $n=10$, $\alpha(\bar x)=610$} &   {$\lambda=1$, $n=100$, $\alpha(\bar x)=2\,303$}\\
\includegraphics[scale=0.42]{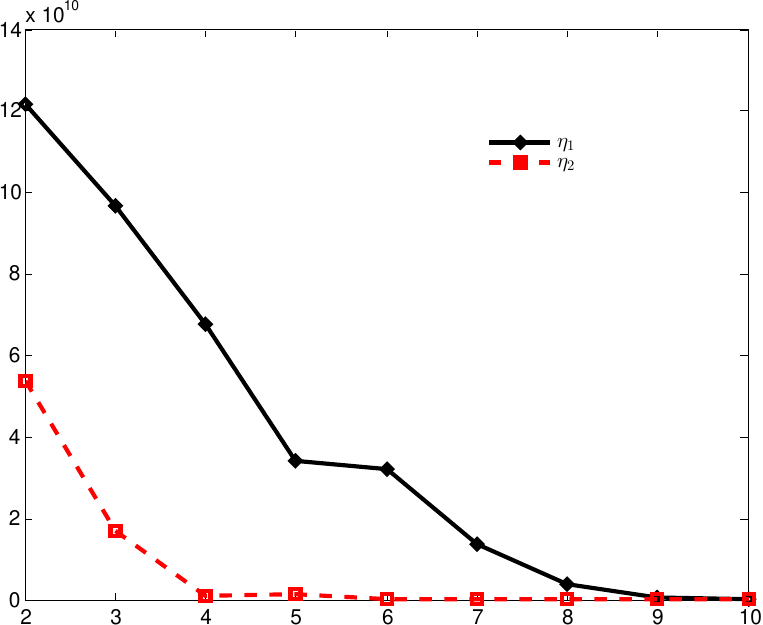}
&
\includegraphics[scale=0.42]{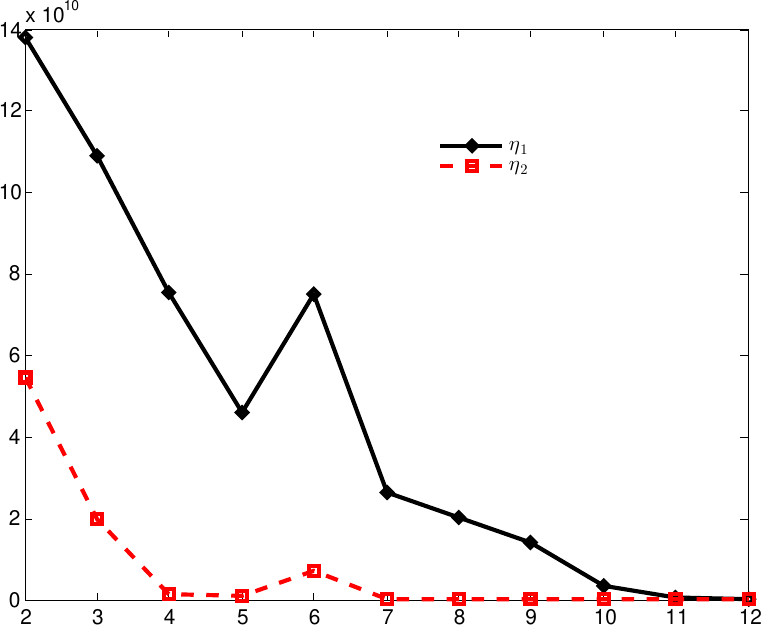}
\\
{$\lambda=1$, $n=1000$, $\alpha(\bar x)=23\,149$} &   {$\lambda=100$, $n=1000$, $\alpha(\bar x)=23\,988$}\\
\includegraphics[scale=0.42]{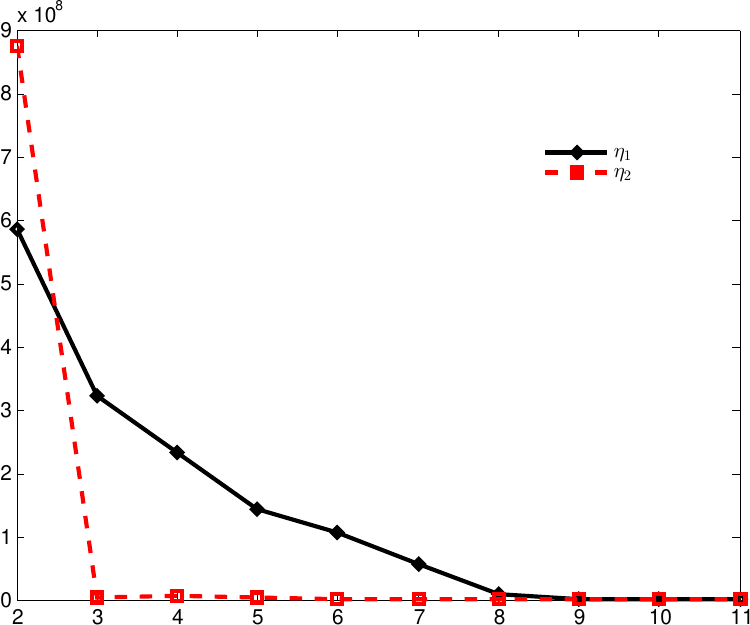}
&
\includegraphics[scale=0.42]{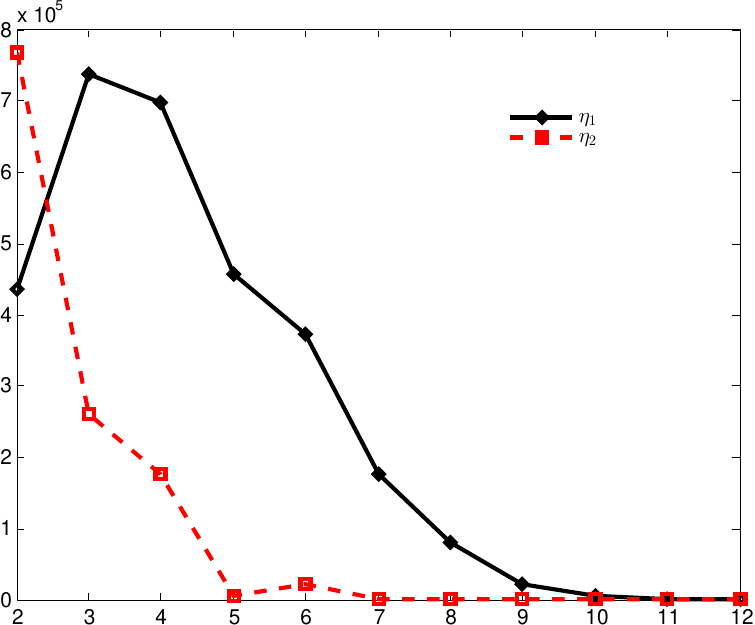}
\\
{$\lambda=100$, $n=100$, $\alpha(\bar x)=2\,839$} &   {$\lambda=100$, $n=10$, $\alpha(\bar x)=526$}
\end{tabular}
\caption{ \label{fig:f_2} Plots of $\eta_{1}(\varepsilon_k, \bar x)$ and $\eta_2(\varepsilon_k, \bar x)$ as a function of iteration $k$ where $\varepsilon_k$
is the duality gap at iteration $k$ for
problem \eqref{defqxsimple2} for some  $\bar x$ randomly drawn in ball $\{x \in \mathbb{R}^n: \|x-x_0 \|_2 \leq 1\}$, $S=AA^T + \lambda I_{2 n}$ for some random 
matrix $A$ with random entries in $[-20,20]$, and several values of the pair $(n,\lambda)$.}
\end{figure}

\begin{table}[H]
\centering
\begin{tabular}{c}
{\tt{ISMD 1}}\\
{\small{
\begin{tabular}{|c|c|c|c|c|}
\hline
Iteration number &  $[1,\left \lceil 0.1N \right \rceil ]$ & $[\left \lceil 0.1N \right \rceil +1,\left \lceil 0.2N \right \rceil]$ &  $[\left \lceil 0.2N \right \rceil +1,\left \lceil 0.3N \right \rceil]$    & $[\left \lceil 0.3N \right \rceil+1,\left \lceil 0.4N \right \rceil]$\\
\hline
\begin{tabular}{l}
IP solver maximal\\ 
number of iterations
\end{tabular}
& $\left \lceil 0.1 I_{\max} \right \rceil$ & $\left \lceil 0.2 I_{\max} \right \rceil$ &
$\left \lceil 0.3 I_{\max} \right \rceil$ & $\left \lceil 0.4 I_{\max} \right \rceil$ \\
\hline
\end{tabular}
}}\\
{\small{
\begin{tabular}{|c|c|c|c|}
\hline
Iteration number &  $[\left \lceil 0.4N \right \rceil +1,\left \lceil 0.5N \right \rceil]$ & $[\left \lceil 0.5N \right \rceil +1,\left \lceil 0.6N \right \rceil]$ &  $[\left \lceil 0.6N \right \rceil +1,\left \lceil 0.7N \right \rceil]$   \\
\hline
\begin{tabular}{l}
IP solver maximal\\ 
number of iterations
\end{tabular}
& $\left \lceil 0.5 I_{\max} \right \rceil$ & $\left \lceil 0.6 I_{\max} \right \rceil$ &
$\left \lceil 0.7 I_{\max} \right \rceil$  \\
\hline
\end{tabular}
}}\\
{\small{
\begin{tabular}{|c|c|c|c|}
\hline
Iteration number &  $[\left \lceil 0.7N \right \rceil +1,\left \lceil 0.8N \right \rceil]$ & $[\left \lceil 0.8N \right \rceil +1,\left \lceil 0.9N \right \rceil]$ &  $[\left \lceil 0.9N \right \rceil +1,N]$   \\
\hline
\begin{tabular}{l}
IP solver maximal\\ 
number of iterations
\end{tabular}
& $\left \lceil 0.8 I_{\max} \right \rceil$ & $\left \lceil 0.9 I_{\max} \right \rceil$ &
$I_{\max} $  \\
\hline
\end{tabular}
}}\\
{\tt{ISMD 2}}\\
{\small{
\begin{tabular}{|c|c|c|c|}
\hline
Iteration number &  $[1,\left \lceil 0.1N \right \rceil ]$ & $[\left \lceil 0.1N \right \rceil  +1,
\left \lceil 0.2N \right \rceil ]$ &  $[\left \lceil 0.2N \right \rceil +1,\left \lceil 0.3N \right \rceil]$    \\
\hline
\begin{tabular}{l}
IP solver maximal\\ 
number of iterations
\end{tabular}
& $\left \lceil 0.2I_{\max} \right \rceil$ & $\left \lceil 0.4 I_{\max} \right \rceil$ & $ \left \lceil 0.6 I_{\max} \right \rceil$   \\
\hline
\end{tabular}
}}\\
{\small{
\begin{tabular}{|c|c|c|c|}
\hline
Iteration number &  $[\left \lceil 0.3N \right \rceil +1,\left \lceil 0.4N \right \rceil ]$ & $[\left \lceil 0.4N 
\right \rceil +1,\left \lceil 0.5N \right \rceil ]$ &  $[\left \lceil 0.5N \right \rceil +1,N]$  \\
\hline
\begin{tabular}{l}
IP solver maximal\\ 
number of iterations
\end{tabular}
& $\left \lceil 0.8I_{\max} \right \rceil$ & $\left \lceil 0.9I_{\max} \right \rceil$ & $I_{\max}$ \\
\hline
\end{tabular}
}}\\
{\tt{ISMD 3}}\\
{\small{
\begin{tabular}{|c|c|c|c|}
\hline
Iteration number &  $[1,\left \lceil 0.02N \right \rceil ]$ & $[\left \lceil 0.02N \right \rceil +1,\left \lceil 0.04N\right \rceil]$ &  $[\left \lceil 0.04N \right \rceil +1,\left \lceil 0.06N \right \rceil]$    \\
\hline
\begin{tabular}{l}
IP solver maximal\\ 
number of iterations
\end{tabular}
& $\left \lceil 0.5I_{\max} \right \rceil$ & $\left \lceil 0.6I_{\max} \right \rceil$ & $\left \lceil0.7I_{\max} \right \rceil$   \\
\hline
\end{tabular}
}}\\
{\small{
\begin{tabular}{|c|c|c|c|}
\hline
Iteration number &  $[\left \lceil 0.06N \right \rceil +1,\left \lceil 0.08N \right \rceil ]$ & $[\left \lceil 0.08N\right \rceil+1,\left \lceil 0.1N \right \rceil]$ &  $[\left \lceil 0.1N \right \rceil+1,N]$  \\
\hline
\begin{tabular}{l}
IP solver maximal\\ 
number of iterations
\end{tabular}
& $\left \lceil 0.8I_{\max} \right \rceil$ & $\left \lceil 0.9I_{\max} \right \rceil$ & $I_{\max}$ \\
\hline
\end{tabular}
}}\\
{\tt{ISMD 4}}\\
{\small{
\begin{tabular}{|c|c|c|c|c|}
\hline
Iteration number &  $[1,\left \lceil 0.1N \right \rceil ]$ & $[\left \lceil 0.1N \right \rceil +1,\left \lceil 0.2N \right \rceil ]$ &  $[\left \lceil 0.2N \right \rceil +1,\left \lceil 0.3N \right \rceil ]$ &  $[\left \lceil 0.3N \right \rceil +1,N]$   \\
\hline
\begin{tabular}{l}
IP solver maximal\\ 
number of iterations
\end{tabular}
& $\left \lceil 0.7I_{\max} \right \rceil$ & $\left \lceil0.8I_{\max}\right \rceil$ & $\left \lceil 0.9I_{\max} \right \rceil$ & $I_{\max}$  \\
\hline
\end{tabular}
}}
\end{tabular}
\caption{Maximal number of iterations for Mosek interior point solver used to solve second stage problems as a function of the iteration number 
$i=1,\ldots,N$, of ISMD and the maximal number of iterations $I_{\max}$ allowed for Mosek 
solver to solve subproblems with SMD. In this table, $\left \lceil x \right \rceil$ is the smallest integer 
larger than or equal to $x$.
For problem \eqref{smdmodel11}-\eqref{smdmodel12} and $n=200, 400, 600$ 
and problem \eqref{smdmodel21}-\eqref{smdmodel22} and $n=200$,
we take $I_{\max}=15$,
for problem \eqref{smdmodel21}-\eqref{smdmodel22} and $n=400$ we take $I_{\max}=25$,
and for problem \eqref{smdmodel21}-\eqref{smdmodel22} and $n=600$ we take $I_{\max}=28$.
For instance for ISMD 1, $N=2000$, and problem \eqref{smdmodel11}-\eqref{smdmodel12}, for iterations 
$[\left \lceil 0.4N \right \rceil +1, \left \lceil 0.5N \right \rceil ]$, i.e., for iterations 
$0.4\times 2000 +1,\ldots,0.5\times 2000=801,\ldots,1000$, Mosek interior point solver is run to solve second stage problems
limiting the maximal number of iterations to $\left \lceil 0.5I_{\max} \right \rceil =8$.}\label{tablenumberiter0}
\end{table}

\end{document}